\DeclareMathOperator{\N}{\mathbb N}
\DeclareMathOperator{\eps }{ \varepsilon }
\newtheorem{thm}{Theorem}[section]
\newtheorem{example}[thm]{Example}
\newtheorem{lemma}[thm]{Lemma}
\newtheorem{thmalpha}{Theorem}
\theoremstyle{definition}
\theoremstyle{definition}
\newtheorem{problem}{Problem}
\begin{document}

\title{Arithmetic sensitivity of cumulant growth in
lacunary sums: transcendental versus algebraic ratio limits}
\author{Christoph Aistleitner\footnote{Graz University of Technology, Institute of Analysis and Number Theory, Steyrergasse 30, 8010 Graz, Austria. Email: aistleitner@math.tugraz.at}, Zakhar Kabluchko\footnote{University of M\"unster, Institute for Mathematical Stochastics, Orleans-Ring 10, 48149 M\"unster, Germany. Email: kabluchk@uni-muenster.de}, Joscha Prochno\footnote{University of Passau, Faculty of Computer Science and Mathematics, Dr.-Hans-Kapfinger-Stra{\ss}e 30, 94032 Passau, Germany. Email: joscha.prochno@uni-passau.de}}
\date{}

\maketitle





\begin{abstract}
We study the asymptotic behavior of cumulants of lacunary trigonometric sums $S_n(\omega) := \sum_{k=1}^n  \cos (2 \pi a_k \omega)$, $\omega\in[0,1]$, and show that cumulant growth is highly sensitive to the arithmetic structure of the sequence $(a_k)_{k \geq 1}$ of positive integers. In particular, if $\lim_{k \to \infty} a_{k+1}/a_k = \eta > 1$ for some transcendental number $\eta$, we prove that for every $m\in \mathbb N$  the $m$-th cumulant of $S_n$ is asymptotically equivalent to  the $m$-th cumulant of the ``independent model'' $\widetilde{S}_n := \sum_{k=1}^n  \cos (2 \pi a_k U_k)$, where $U_1, U_2, \dots$ are independent random variables having uniform distribution on $[0,1]$.  In particular, the order of growth of the cumulants as $n \to \infty$ is linear in this case. We also show that the transcendence condition for $\lim_{k \to \infty} a_{k+1}/a_k$ is in general necessary: when the ratio limit $\eta$ is algebraic,
the cumulants of $S_n$ may have a different asymptotic order from those of $\widetilde{S}_n$.   For instance, for $a_k = 2^k+1$ (with $\eta = 2$), the sixth cumulant of $S_n$ grows quadratically in $n$. In contrast, for $a_k = 2^k$ (again $\eta = 2$) or when  $(a_k)_{k \geq 1}$ is the Fibonacci sequence (with $\eta = (1+\sqrt 5)/2$), the $m$-th cumulant of $S_n$ grows linearly as $n\to\infty$, but with a growth rate that differs from the one of the independent model $\widetilde{S}_n$. Overall, our results show that the asymptotic behavior of the cumulants of lacunary trigonometric sums depends on arithmetic effects in a very delicate way. This is particularly remarkable since many other probabilistic limit theorems, such as the Central Limit Theorem, hold for lacunary trigonometric sums in a universal way without any such sensitivity towards arithmetic effects. 
\bigskip

\noindent\textbf{Subjectclass:} Primary 42A55; Secondary 11D45, 11K06, 11K70, 42A70, 60F05, 60F10\\
\noindent\textbf{Keywords:} Lacunary trigonometric sums, Hadamard gap condition, cumulants, transcendental number, Fibonacci sequence, Perron number
\end{abstract}
	 
\maketitle

\section{Introduction and main results}

\subsection{Introduction}

Let $(a_k)_{k \geq 1}$ be a sequence of positive integers satisfying the Hadamard gap condition
 	\[
		\frac{a_{k+1}}{a_k} \geq q, \qquad k \geq 1,
	\]
for some number $q>1$.
We are interested in the lacunary trigonometric sums
$$
S_n(\omega) := \sum_{k=1}^n  \cos (2 \pi a_k \omega),
\qquad
\omega\in[0,1],
\quad
n\in \N,
$$
considered as random variables on the probability space $[0,1]$,  endowed with the Borel $\sigma$-algebra and Lebesgue measure.  It is well-known that lacunary trigonometric sums or, more generally, lacunary sums of dilated periodic functions, exhibit many properties typically satisfied by sums of independent and identically distributed (i.i.d.)\ random variables. In our setting, the random variables $X_k(\omega) := \cos(2\pi a_k \omega)$, $k\in\mathbb N$, are identically distributed and uncorrelated (assuming the $a_k$, $k\in\mathbb N$, are distinct); but the $X_k$'s are not independent and not even stationary. Classical works of Kac \cite{Kac_biggaps}, Salem and Zygmund \cite{SZ1947} or Erd\"{o}s and G{\'a}l \cite{EG1955} that appeared in the mid-20th century have shown that under the Hadamard gap condition the suitably normalized partial sums $S_n$, despite the dependence of summands, asymptotically behave like sums of i.i.d.\ random variables, satisfying a central limit theorem (CLT) and a law of the iterated logarithm (LIL) with the same normalization as in the i.i.d.\ case; see~\cite{aistleitner2024lacunarysequencesanalysisprobability} for more results and references.  Only recently it has been shown by Aistleitner, Gantert, Kabluchko, Prochno and Ramanan in \cite{agkpr} (see also \cite{FJP2022}) that this benign (and maybe expected) behavior rather surprisingly may break down when large deviation probabilities $\mathbb P[S_n/n >u]$, $u\in (0,1)$,  are considered. Here the arithmetic structure of the  sequence $(a_k)_{k\geq 1}$ suddenly plays a crucial role and strongly influences the limiting behavior such that, depending on the sequence,  the analogue of Cram\'er's theorem may hold with the same rate function as in the i.i.d.\ case, or it may hold with a different rate function, or the Cram\'er theorem may break down completely.
The ultimate goal is thus to deeply understand the delicate interplay of arithmetic and analytic effects on the probabilistic behavior of lacunary sums.
In this paper our focus will be on the cumulants of $S_n$, which are known to be fundamental characteristics of a probability distribution, describing the shape, its mean, variance, skewness, and various other properties.
 
We recall that the $m$-th cumulant of a random variable $X$ whose moment generating function $m_X(t):= \mathbb E [e^{tX}]$ is finite for $t\in \mathbb R$  can be defined by
$$
\kappa_m(X) =  \frac{d^m}{dt^m} \log m_X(t) \Big |_{t=0}, \qquad m\in \N. 
$$
Note that $\kappa_1(X) = \mathbb E X$, $\kappa_2(X)$ is the variance of $X$, while $\kappa_3(X)$ is the third central moment. In general, $\kappa_m(X)$ can be expressed through the first $m$ moments of $X$; see, e.g., \cite[Section~3]{peccati_taqqu_book_wiener_chaos}. A basic property of cumulants  is their additivity, i.e.,  $\kappa_m(X+Y) = \kappa_m(X) + \kappa_m(Y)$ for independent random variables $X$ and $Y$.



We shall compare the cumulants of the lacunary trigonometric sums $S_n$, $n\in\mathbb N$, with the cumulants of the ``independent model'' $\widetilde{S}_n := \sum_{k=1}^n  \cos (2 \pi a_k U_k)$, $n\in\mathbb N$, where $U_1, U_2, \dots$ are independent random variables having uniform distribution on $[0,1]$; in what follows, we write $U \sim\mathrm{Unif}[0,1]$ for a random variable having uniform distribution on $[0,1]$.  The common distribution of the i.i.d.\  random variables $\widetilde{X}_k:= \cos(2\pi a_k U_k)$, $k\in\mathbb N$, is the arcsine law on the interval $(-1,1)$, which has Lebesgue density
\[
f(x) = \frac{1}{\pi\sqrt{1-x^2}}, \qquad |x|<1.
\]
The moment generating function of the arcsine distribution is the modified Bessel function of the first kind $I_0$, which has the series expansion
\begin{equation} \label{bessel_series}
I_0(t) 
= \sum_{j=0}^\infty \frac{(t/2)^{2j}}{(j!)^{2}}, \quad t\in\mathbb R.
\end{equation}
More precisely, we have
\begin{equation} \label{arcsine_bessel}
m_{\widetilde{X}_k}(t)
=
\mathbb E [e^{t \widetilde{X}_k}]
=
\frac 1 {\pi} \int_{-1}^{1} \frac{e^{tx} dx}{\sqrt{1-x^2}}
=
\frac 1{\pi} \int_0^\pi e^{t \cos u} du = I_0(t).
\end{equation}
The cumulants of $\widetilde{X}_k$ are thus given by
$$
\widetilde{\kappa}_m :=  \kappa_m(\widetilde{X}_k) = \frac{d^m}{d t^m} \log I_0(t) \Big|_{t=0}, \qquad m\in \N, \quad k\in \N.
$$
Note that the cumulants with odd index $m$ are all zero since the function $\log I_0(t)$ is even, c.f.\ the series expansion~\eqref{bessel_series}. 
For the first few cumulants with even indices, we obtain the values
\begin{equation}  \label{cumulants_indep}
\widetilde{\kappa}_2 = \frac{1}{2}, \qquad \widetilde{\kappa}_4 = -\frac{3}{8}, \qquad \widetilde{\kappa}_6  = \frac{5}{4}, \qquad \widetilde{\kappa}_8 = -\frac{1155}{128},
\qquad
\widetilde{\kappa}_{10} = \frac{3591}{32}.
\end{equation}
The sequence $(2^{2j} \widetilde{\kappa}_{2j})_{j\geq 1}$ appears as entry A352284  in the On-Line Encyclopedia of Integer Sequences (OEIS)~\cite{sloane}; see also A352313 for a version without signs.  Since cumulants are additive for independent random variables,  the $m$-th cumulant of $\widetilde{S}_n=\widetilde{X}_1 + \ldots + \widetilde{X}_n$ equals $n \widetilde{\kappa}_m$, i.e., $\kappa_m(\widetilde{S}_n)=n \widetilde{\kappa}_m$. 

\subsection{Sequences with transcendental ratio limit}

The main purpose of the present paper is to show that the cumulants of $S_n$ depend on the arithmetic structure of the sequence $(a_k)_{k \geq 1}$ in an extremely delicate way. In Theorem \ref{th1} below we prove that when $a_{k+1} / a_k$ converges towards a transcendental number $\eta>1$, then the cumulants of $S_n$ behave asymptotically in the same way as those of $\widetilde{S}_n$; recall that a transcendental number is a real or complex number that is not algebraic, i.e., not the root of a non-zero polynomial with integer coefficients. Afterwards, in Theorems \ref{th2} and \ref{theo:fibonacci_general} presented in the following subsections, we show that this condition is in a sense optimal: we exhibit examples of sequences for which $a_{k+1}/a_k$ converges towards an integer, or towards an algebraic number, and for which the cumulants of $S_n$ not only fail to satisfy $\kappa_m(S_n) \sim \kappa_m(\widetilde{S}_n)$ as $n\to\infty$, but for which the cumulants of $S_n$ actually turn out to be of a completely ``wrong'' asymptotic order in comparison with those of $\widetilde{S}_n$. 

In the first theorem, dealing with the case of a transcendental ration limit, we use the standard Bachmann--Landau notation to denote the order of approximation.

\begin{thmalpha}[Sequences with transcendental ratio limit] \label{th1}
Let $(a_k)_{k \geq 1}$ be an increasing sequence of integers such that
\begin{equation} \label{eta_conv}
\lim_{k \to \infty} \frac{a_{k+1}}{a_k} = \eta > 1
\end{equation}
for some transcendental number $\eta$. Then, for all integers $m \geq 1$, we have
$$
\kappa_{m}(S_n) - n \widetilde{\kappa}_m = \mathcal{O}(1) ,\qquad \text{as $n \to \infty$}.
$$
In particular, $\kappa_{m}(S_n)/n \to \widetilde{\kappa}_m$, as $n\to\infty$.  
\end{thmalpha}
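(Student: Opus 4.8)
The plan is to pass to moment generating functions and exploit the Jacobi--Anger expansion $e^{t\cos\theta}=\sum_{\ell\in\mathbb{Z}}I_\ell(t)\,e^{i\ell\theta}$, where $I_\ell$ is the modified Bessel function of the first kind of order $\ell$. Since $|S_n|\le n$, the function $m_{S_n}(t)=\mathbb{E}[e^{tS_n}]$ is entire; expanding the product $\prod_{k=1}^n e^{t\cos(2\pi a_k\omega)}$ via this series and integrating over $\omega\in[0,1]$ (which kills every exponential $e^{2\pi i N\omega}$ with $N\ne 0$) gives, after justifying the interchange by absolute convergence,
\[
m_{S_n}(t)=\sum_{\vec\ell\in\mathbb{Z}^n:\ \ell_1 a_1+\cdots+\ell_n a_n=0}\ \prod_{k=1}^n I_{\ell_k}(t).
\]
The term $\vec\ell=0$ contributes $I_0(t)^n$, which by \eqref{arcsine_bessel} is precisely $m_{\widetilde{S}_n}(t)$. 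Since $I_0(0)=1$ and $I_\ell(0)=0$ for $\ell\ne 0$, I factor this out: $m_{S_n}(t)=I_0(t)^n\big(1+R_n(t)\big)$ with
\[
R_n(t)=\sum_{\vec\ell\ne 0:\ \sum_k\ell_k a_k=0}\ \prod_{k\in\supp(\vec\ell)}\frac{I_{\ell_k}(t)}{I_0(t)},
\]
an analytic function near $t=0$ vanishing at $t=0$. Taking logarithms, $\kappa_m(S_n)-n\widetilde{\kappa}_m=m!\,[t^m]\log(1+R_n(t))$, so the task is to bound the coefficient of $t^m$ in $\log(1+R_n)$ uniformly in $n$.

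Since $I_\ell(t)/I_0(t)=\mathcal{O}(t^{|\ell|})$ and every nonzero solution has $\sum_k|\ell_k|\ge 2$, a solution $\vec\ell$ contributes to $R_n$ only to order $\sum_k|\ell_k|$ and $R_n=\mathcal{O}(t^2)$. Hence the coefficient of $t^m$ in $\log(1+R_n)=\sum_{j\ge1}\tfrac{(-1)^{j-1}}{j}R_n^j$ is a universal polynomial in the coefficients of $R_n$ up to order $m$, and each such coefficient of $R_n$ receives contributions only from the finitely many ``light'' solutions $\vec\ell\ne 0$ with $\sum_k|\ell_k|\le m$; moreover the contribution of each such $\vec\ell$, namely $\prod_{k\in\supp(\vec\ell)}I_{\ell_k}(t)/I_0(t)$, is an $n$-independent power series determined by the multiset of nonzero values of $\vec\ell$, so its low-order coefficients lie in a finite set. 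Therefore everything reduces to showing that $N_n(m):=\#\{\vec\ell\in\mathbb{Z}^n\setminus\{0\}:\ \sum_k|\ell_k|\le m,\ \sum_k\ell_k a_k=0\}$ is bounded by a constant depending only on $m$, uniformly in $n$.

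To bound $N_n(m)$, I write such a solution through its support $k_1<\cdots<k_s$ (with $2\le s\le m$) and nonzero coefficients $c_1,\dots,c_s$ satisfying $\sum_i|c_i|\le m$. The hypothesis $a_{k+1}/a_k\to\eta>1$ together with integrality of the $a_k$ gives a Hadamard gap $a_{k+1}/a_k\ge q>1$, and the relation $\sum_i c_i a_{k_i}=0$ then forces $a_{k_s}\le m\,a_{k_{s-1}}$, hence $k_s-k_{s-1}\le\log m/\log q$, and likewise for all consecutive support indices; thus the support lies in a window of bounded length, and there remain only finitely many possible ``shapes'' $(s;\,g_2,\dots,g_s;\,c_1,\dots,c_s)$ with $g_i:=k_i-k_1$. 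For a fixed shape, dividing $\sum_i c_i a_{k_1+g_i}=0$ by $a_{k_1}$ and letting $k_1\to\infty$ (using $a_{k_1+g}/a_{k_1}\to\eta^{g}$) forces $P(\eta)=0$ for the polynomial $P(x):=c_1+c_2x^{g_2}+\cdots+c_s x^{g_s}\in\mathbb{Z}[x]$, which is nonzero since $c_1\ne 0$ --- impossible because $\eta$ is transcendental. So each shape contributes only finitely many solutions, and summing over the finitely many shapes yields $N_n(m)\le C(m)$ uniformly in $n$. This is the crux of the argument: the gap condition confines every relation to a bounded combinatorial window, and transcendence then excludes all genuine relations inside it; I expect this step to be the main obstacle, while the Bessel-series manipulations and the combinatorics relating $\kappa_m$ to the jet of $R_n$ are routine. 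The ``in particular'' follows at once by dividing by $n$.
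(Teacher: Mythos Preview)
Your approach via the Jacobi--Anger expansion and the moment generating function is correct in outline and genuinely different from the paper's. The paper works directly with moments: it fixes a cutoff $\ell$, proves (Lemma~\ref{lemma1}) that $S_{1,\ell}$ and $S_{\ell+1,n}$ are ``uncorrelated of order $m$'' so that mixed moments factorise, proves (Lemma~\ref{lemma2}) that $S_{\ell+1,n}$ has the same moments up to order $m$ as the independent model, and then reads off $\kappa_m(S_n)=\kappa_m(S_{1,\ell})+(n-\ell)\widetilde{\kappa}_m$. Your generating-function route packages all orders at once and avoids the explicit moment--cumulant bookkeeping, but both arguments rest on the same arithmetic input: the nonzero integer vectors $\vec\ell$ with $\sum_k \ell_k a_k=0$ and $\sum_k|\ell_k|\le m$ are finite in number, uniformly in $n$.

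Your argument for this arithmetic input has a genuine gap at the step ``$a_{k_s}\le m\,a_{k_{s-1}}$, hence $k_s-k_{s-1}\le\log m/\log q$, \emph{and likewise for all consecutive support indices}''. Isolating the largest term bounds only the \emph{top} gap $k_s-k_{s-1}$; the argument does not iterate, because bounding $k_{s-1}-k_{s-2}$ would require a lower bound $|c_{s-1}a_{k_{s-1}}+c_s a_{k_s}|\gtrsim a_{k_{s-1}}$, and the Hadamard gap condition alone does not prevent this partial sum from being very small. (For $a_k=2^k+1$, where $\eta=2$ is not transcendental, one has $2a_j-a_{j+1}-2a_k+a_{k+1}=0$ for \emph{all} $j,k$, so the interior gap is unbounded; it is precisely transcendence that must rule out the analogous phenomenon here, not the growth condition.) The repair is an induction on the number of distinct indices: if a large gap occurs, split the relation into a high and a low part; by the inductive hypothesis the high part, having fewer terms, is either zero (and then both parts are separate relations of smaller weight) or bounded below by $\varepsilon\,a_{\min(\text{high})}$, which for a sufficiently large gap contradicts the trivial upper bound $m\,a_{\max(\text{low})}$ on the low part. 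This is exactly the inductive engine of the paper's Lemma~\ref{lemma_poly}; transcendence already enters there to produce the $\varepsilon$ for bounded-gap configurations, not only in your final ``fixed shape'' step. With this induction supplied, your proof goes through.
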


\subsection{An example with non-linear growth of cumulants}

A famous example of a lacunary sum which exhibits ``irregular'' behavior is the system $(f(a_k \omega))_{k \geq 1}$, $\omega\in [0,1]$,  where
$$
f(x) = \cos(2 \pi x) + \cos(4 \pi x) \qquad \text{and} \qquad a_k = 2^k + 1, \quad k \geq 1.
$$
The example was first attributed to Erd\H os and Fortet in papers of Salem and Zygmund \cite{salz} and of Kac \cite{kac}, and is now known as the Erd\H os--Fortet example (cf.\ also \cite{clb,fm}).
 The corresponding lacunary sum is
\begin{eqnarray*}
\sum_{k=1}^n f(a_k \omega) & = & \sum_{k=1}^n \left( \cos\big(2 \pi (2^k+1) \omega\big) + \cos\big(4 \pi  (2^k+1) \omega\big) \right), \qquad \omega \in [0,1].
\end{eqnarray*}
Noting that the term $\cos(4 \pi  (2^k+1) \omega)$ for an index $k$ and the term $\cos(2 \pi (2^{k+1}+1) \omega)$ for an index $k+1$ can be combined by means of the trigonometric sum-to-product identity $\cos(x)+\cos(y) = 2 \cos(\tfrac{x+y}{2})\cos(\tfrac{x-y}{2})$, we obtain
$$
\cos\big(4 \pi  (2^k+1) \omega\big) + \cos\big(2 \pi (2^{k+1}+1) \omega\big) = 2 \cos(\pi \omega) \cos \left(4 \pi \left(2^k + \frac{3}{4} \right) \right),
$$
and so the sum can be re-written as
$$
\sum_{k=1}^n f(a_k \omega) = \cos\big(6 \pi \omega\big) + \cos\big(4 \pi (2^n+1) \omega\big) + 2 \cos(\pi \omega) \sum_{k=1}^{n-1} \cos \left(4 \pi \left(2^k + \frac{3}{4} \right) \right).
$$
Here $\sum_{k=1}^{n-1} \cos \left(4 \pi \left(2^k + \frac{3}{4} \right) \right)$ is a ``pure'' trigonometric sum, in the sense that each summand is a single cosine term (as opposed to a lacunary sum of general  trigonometric polynomials involving several terms), and behaves in a ``random'' way, so that the system $(f(a_k \omega))_{k \geq 1}$ behaves in many regards like a system of i.i.d.\ random variables with variance $1/2$, all of which  have been multiplied with the (independent) factor $2 \cos(\pi \omega)$. Thus, for example, the law of the iterated logarithm for this system takes the form
$$
\limsup_{n \to \infty}  \frac{\left| \sum_{k=1}^n f(a_k \omega) \right|}{\sqrt{n \log \log n}} = |2 \cos(\pi \omega)|\qquad \textup{almost everywhere},
$$
and the central limit theorem holds in a similar form with a so-called ``variance mixture Gaussian'' as the limit distribution. Note, however, that all of this ``irregular'' behavior is only observed since $f$ is a trigonometric polynomial with more than one term; for the pure trigonometric sum
$$
\sum_{k=1}^n \cos\big(2 \pi  (2^k +1) \omega\big),
$$
the CLT and LIL hold in their universal form, in the same way as they hold for every other lacunary sequence. Thus, as far as pure lacunary sums are concerned, the CLT and LIL cannot ``detect'' the difference in the arithmetic structure of the sequences $(2^k)_{k \geq 1}$ and $(2^k+1)_{k \geq 1}$, say, or a sequence $(a_k)_{k \geq 1}$ with $\frac{a_{k+1}}{a_k}$ tending towards a transcendantal number as in the statement of Theorem \ref{th1}. Our next theorem shows that cumulants on the other hand can detect this difference, even in the case of pure trigonometric sums.


\begin{thmalpha}[Non-linear growth of cumulants]\label{th2}
Let $a_k = 2^k+1$ for $k \geq 1$, and $S_n(\omega) = \sum_{k=1}^n \cos(2 \pi a_k \omega)$, $\omega\in[0,1]$. Let $\kappa_m(S_n)$ be the $m$-th cumulant of $S_n$. Then we have $\kappa_1(S_n) = \kappa_3(S_n) = \kappa_5(S_n) = \dots = 0$ for $n \geq 1$, as well as
\begin{eqnarray*}
\kappa_2(S_n) & = & \frac{n}{2} \qquad \text{for $n \geq 1$},\\
\kappa_4(S_n) & = & \frac{-3n + 28}{8} \qquad \text{for $n \geq 4$},\\
\kappa_6(S_n) & = & \frac{45n^2 + 380n -1875}{16} \qquad \text{for $n \geq 7$.}
\end{eqnarray*}
\end{thmalpha}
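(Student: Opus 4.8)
The plan is to convert the statement into a Diophantine counting problem via the moment generating function, and then to solve that problem explicitly for $a_k=2^k+1$. First I would use the Jacobi--Anger expansion $e^{t\cos\theta}=\sum_{j\in\mathbb Z}I_j(t)e^{ij\theta}$ together with term‑by‑term integration over $\omega\in[0,1]$ (legitimate since $\sum_j|I_j(t)|<\infty$ for fixed $t$) to get
\[
m_{S_n}(t)=\mathbb E\bigl[e^{tS_n}\bigr]=\sum_{\mathbf j\in\mathbb Z^n:\ \sum_k j_k a_k=0}\ \prod_{k=1}^n I_{j_k}(t)=I_0(t)^{\,n}\bigl(1+R_n(t)\bigr),
\qquad R_n(t):=\!\!\sum_{\mathbf 0\neq\mathbf j,\ \sum_k j_k a_k=0}\ \prod_{k:\,j_k\neq0}\frac{I_{j_k}(t)}{I_0(t)}.
\]
Taking logarithms and recalling $\kappa_m(\widetilde S_n)=n\widetilde\kappa_m=n\,\frac{d^m}{dt^m}\log I_0(t)|_{0}$ gives $\kappa_m(S_n)=n\widetilde\kappa_m+m!\,[t^m]\log(1+R_n(t))$, where $[t^m]$ denotes the coefficient of $t^m$. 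Since $I_j(t)=\frac{(t/2)^{|j|}}{|j|!}(1+O(t^2))$, each summand of $R_n$ is $O(t^{\|\mathbf j\|_1})$ with $\|\mathbf j\|_1:=\sum_k|j_k|$, and its leading coefficient is $2^{-\|\mathbf j\|_1}/\prod_k|j_k|!$. So everything reduces to enumerating the nonzero integer solutions $\mathbf j$ of $\sum_k j_k a_k=0$ with small $\|\mathbf j\|_1$.

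Three elementary facts dispose of the small cases. (i) The shift $\omega\mapsto\omega+\tfrac12$ turns each $\cos(2\pi a_k\omega)$ into $-\cos(2\pi a_k\omega)$ because $a_k=2^k+1$ is odd, so $S_n\overset{d}{=}-S_n$ and $\kappa_{2\ell+1}(S_n)=0$ for all $\ell\geq0$. (ii) For $a_k=2^k+1$ a relation $\sum_k j_k a_k=0$ forces $\sum_k j_k2^k=-\sum_k j_k$; the left side is even, hence $\sum_k j_k$ is even, and since $\sum_k j_k\equiv\|\mathbf j\|_1\pmod 2$ this forces $\|\mathbf j\|_1$ even. (iii) There is no nonzero solution with $\|\mathbf j\|_1\leq2$. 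Hence $R_n(t)=O(t^4)$, so $\log(1+R_n)=R_n+O(t^8)$ and $\kappa_m(S_n)=n\widetilde\kappa_m+m!\,[t^m]R_n$ for $m\leq7$; in particular $\kappa_2(S_n)=n\widetilde\kappa_2=n/2$. For $\kappa_4$ only the solutions with $\|\mathbf j\|_1=4$ matter: a short analysis of $3a_{k_1}=a_{k_2}$ and $\pm a_{k_1}\pm a_{k_2}\pm a_{k_3}\pm a_{k_4}=0$ (using uniqueness of binary expansions) shows that for $n\geq4$ these are exactly $\pm(3e_1-e_3)$ (from $3a_1=a_3$) and $\pm(e_1+e_2+e_3-e_4)$ (from $a_1+a_2+a_3=a_4$), both on fixed indices. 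Plugging these into $\kappa_4(S_n)=n\widetilde\kappa_4+4!\,[t^4]R_n$ and computing the relevant Bessel coefficients yields $\kappa_4(S_n)=-\tfrac{3n}{8}+\tfrac72=\tfrac{-3n+28}{8}$ for $n\geq4$.

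For $\kappa_6$ one needs all solutions with $\|\mathbf j\|_1\in\{4,6\}$: the two $\|\mathbf j\|_1=4$ relations above contribute a fixed rational constant through the next Bessel coefficient, and for $\|\mathbf j\|_1=6$ one has $[t^6]\prod_{k:j_k\neq0}I_{j_k}/I_0=2^{-6}/\prod_k|j_k|!$, so $\kappa_6(S_n)=n\widetilde\kappa_6+(\text{const})+\tfrac{6!}{64}\sum_{\|\mathbf j\|_1=6}\prod_k|j_k|!^{-1}$. The core of the argument is the classification of the $\|\mathbf j\|_1=6$ solutions of $\sum_k j_k2^k=-\sum_k j_k$. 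The identity $2a_k-a_{k+1}=1$ (equivalently the minimal relation $2a_k-3a_{k+1}+a_{k+2}=0$, which already has $\|\cdot\|_1=6$) is what allows ``long'' relations; it leads to: a two‑parameter family $\pm(2e_a-e_{a+1}-2e_b+e_{b+1})$ with $|a-b|\geq2$ (shape $\{2,2,1,1\}$), responsible for the $n^2$ term; several one‑parameter families of shapes $\{3,2,1\},\{4,1,1\},\{2,1,1,1,1\}$ obtained by combining the building blocks $2a_k-a_{k+1}=1$, $4a_k-a_{k+2}=3$ with each other or with a fixed residual such as $a_1+a_2-a_3=-1$ or $a_2-a_1=2$; and finitely many sporadic solutions on bounded indices (e.g.\ $a_5=2a_1+2a_2+a_4$, $3a_2+2a_3=a_5$, …). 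Summing $\prod_k|j_k|!^{-1}$ over all of these with the correct parameter ranges — which requires $n\geq7$ for every family and sporadic to be present and non‑degenerate — gives $\sum_{\|\mathbf j\|_1=6}\prod_k|j_k|!^{-1}=\tfrac{n^2}{4}+2n-6$, whence $\kappa_6(S_n)=\tfrac{45n^2+380n-1875}{16}$ for $n\geq7$. (The values $\widetilde\kappa_2=\tfrac12,\ \widetilde\kappa_4=-\tfrac38,\ \widetilde\kappa_6=\tfrac54$ enter here.)

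The main obstacle is precisely this last classification: it is an $S$‑unit–equation–type problem (bounded sums of signed powers of $2$ equal to a small integer), finite but intricate, and the delicate point is that solutions need \emph{not} have bounded support, since the carry identity $2^{k+1}=2^k+2^k$ — reflected in $2a_k-a_{k+1}=1$ — produces parameter‑dependent families. I would control it via the a priori bound $3|j_K|\leq6+6\cdot2^{-(K-1)}$ on the extreme coefficients (with $K$ the largest index in the support), which forces $|j_K|\leq2$ once $K\geq4$, followed by a finite descent that peels off the ``free'' block $2a_k-a_{k+1}$ and reduces everything to a bounded search; the one‑ and two‑parameter families in the statement come out of exactly the configurations where the extremal coefficient equals $2$.
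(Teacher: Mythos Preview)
Your route is sound and genuinely different from the paper's. The paper works directly with moments: it expands $\mathbb{E}[S_n^m]$ as a count of sign--index tuples $(\varepsilon_1,\ldots,\varepsilon_m;k_1,\ldots,k_m)$ solving $\sum \varepsilon_i a_{k_i}=0$ (with repetitions allowed), carries out that enumeration exhaustively for $m\le 6$, and then feeds the results into the moment-to-cumulant identities $\kappa_4=\mu_4-3\mu_2^2$ and $\kappa_6=\mu_6-15\mu_4\mu_2+30\mu_2^3$. You instead pass through the moment generating function via Jacobi--Anger, so that $\log m_{S_n}(t)=n\log I_0(t)+\log(1+R_n(t))$ separates off the independent-model contribution $n\widetilde{\kappa}_m$ from the start, and the correction is governed by the \emph{reduced} counting problem over nonzero $\mathbf{j}\in\mathbb{Z}^n$ with $\sum_k j_k a_k=0$. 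The payoff of your parametrization is that the paper's ``trivial'' pairings and padded solutions (e.g.\ its family $(k,3,2,1,k,4)$, which collapses to $e_1+e_2+e_3-e_4$) are absorbed into $\mathbf{j}=\mathbf{0}$ or into lower-$\|\mathbf{j}\|_1$ terms, so no moment-to-cumulant cancellation is needed; the price is that you must track Bessel coefficients to order $t^6$ for the two $\|\mathbf{j}\|_1=4$ relations. Both arguments rest on the same Diophantine core --- your two-parameter $\{2,2,1,1\}$ family $\pm(2e_a-e_{a+1}-2e_b+e_{b+1})$ is exactly the paper's family $(k+1,\ell,\ell,k,k,\ell+1)$ --- and your asserted value $\sum_{\|\mathbf{j}\|_1=6}\prod_k|j_k|!^{-1}=\tfrac{n^2}{4}+2n-6$ does reproduce the paper's formula once combined with $n\widetilde{\kappa}_6$ and the $t^6$-contribution $-\tfrac{795}{16}$ of the $\|\mathbf{j}\|_1=4$ terms. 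The one place where your proposal remains a sketch is precisely that last enumeration: the paper writes out every parametric and sporadic solution explicitly, whereas you describe the descent strategy and the shapes of the families without exhibiting the complete list, so to turn this into a full proof you would still have to carry out that finite case check.
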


The significant point in Theorem \ref{th2} is that the cumulants of $S_n$ not only fail to coincide with those of $\widetilde{S}_n$, which are given by $n \widetilde{\kappa}_m$ for the constants from \eqref{cumulants_indep}. Even more drastically, while the cumulants of $\widetilde{S}_n$ grow all linearly (as a consequence of the fact that $\widetilde{S}_n$ is a sum of $n$ i.i.d.\ variables), the cumulants of $S_n$ fail to do so and have (in general) a different asymptotic order. This effect becomes first visible for the $6$-th cumulant, but could also be observed for cumulants of higher order; however, to keep the exposition short, 
we have refrained from calculating cumulants of higher order than $6$. 
In statistical physics, cumulants are often used as  examples of ``extensive quantities'', i.e., those that scale with system size: for systems with short-range (summable) correlations, the $m$-th cumulant of a sum of local observables grows linearly with the size $n$ of the system.  For systems with non-summable pair correlations, cumulants may grow non-linearly, and this effect is usually already visible in the second cumulant. Theorem~\ref{th2} provides an example of an \textit{uncorrelated} system for which the $6$-th cumulant loses its extensivity.

\subsection{Recursive sequences with dominant root condition}
In this section, we shall study the cumulants of lacunary sums in the setting where $(a_k)_{k \geq 1}$ is a recursive sequence whose characteristic polynomial has a dominant real root. In this setting, $a_{k+1}/a_k$ converges to the dominant real root, which is an algebraic number.

The simplest special case of this setting is the famous Fibonacci sequence $(F_k)_{k\in\N}$ defined via the recurrence relation
	\[
F_k=F_{k-1}+F_{k-2} \text{ for all } k\geq 3,  \quad \text{ with } F_1=F_2 = 1.
	\]
By Binet's formula, $F_k = (\varphi^k - \psi^k)/\sqrt 5$, $k\in \N$,  where $\varphi = (1+\sqrt 5)/2$ is the golden ratio  and $\psi = (1-\sqrt 5)/2$. In particular, as it had already been observed by Johannes Kepler, the ratio limit is
	\[
	  \eta = \lim_{k\to\infty} \frac{F_{k+1}}{F_k} = \frac{1+\sqrt{5}}{2} = \varphi >1.
	\]
Note that $\varphi$ is an algebraic number since it solves  $\varphi^2-\varphi-1=0$.  We will show that the cumulants for lacunary sums $S_n$, $n\in\mathbb N$, involving the Fibonacci sequence grow linearly in $n$, but the asymptotic slope is not the same  as in the independent case $\widetilde{S}_n$.

More generally, we consider lacunary sums $S_n(\omega) = \sum_{k=1}^n  \cos (2 \pi a_k \omega)$, $\omega\in[0,1]$, where $a_1,a_2,\ldots$ are positive integers admitting the representation
\begin{equation}\label{eq:a_n_recursive_binet_representation}
a_k = c_1 \lambda_1^k + \ldots+ c_d \lambda_d^k, \qquad k\in \mathbb N,
\end{equation}
where $d\in \N$ and
\begin{itemize}
\item[(i)] $\lambda_1,\ldots, \lambda_d\in \mathbb C$ are roots of some irreducible degree $d$ polynomial $P(z) = \sum_{j=0}^d r_j z^j$ with integer coefficients $r_0,\ldots, r_d \in \mathbb Z$;
\item[(ii)] $c_1,\ldots, c_d$ are complex numbers;
\item[(iii)] the following \textit{dominant root condition} holds:
\begin{equation}\label{eq:dominant_root_condition}
\lambda _1 \text{ is real }, \qquad \lambda_1 >1, \qquad \lambda_1 > \max\big\{|\lambda_2|,\ldots, |\lambda_d|\big\} =: \rho, \qquad c_1 \neq 0.
\end{equation}
\end{itemize}

Clearly, $a_{k+1}/a_k \to \lambda_1 =: \eta$.
Algebraic numbers $\lambda_1$ whose Galois conjugates satisfy~\eqref{eq:dominant_root_condition} are called \textit{Perron numbers}. It follows from~\eqref{eq:a_n_recursive_binet_representation} and~(i) that the sequence $(a_k)_{k\geq 1}$ satisfies the linear recursion relation $r_d a_{k+d} = -\sum_{j=0}^{d-1} r_j a_{k+j}$, $k\in \N$.

\begin{example}
The Fibonacci numbers $F_k$ satisfy the above conditions, as do the  Lucas numbers $L_k := \varphi^k + \psi^k$.
\end{example}

\begin{example}
The sequence $a_k = c \eta^k$, where $c\in \N$ and $\eta \in \{2,3,\ldots\}$, satisfies the above conditions with $d=1$.
\end{example}

\begin{thmalpha}[Recursive sequences with dominant root condition]\label{theo:fibonacci_general}
Let $(a_k)_{k\geq 1}$ be a sequence of positive integers satisfying~\eqref{eq:a_n_recursive_binet_representation} and (i), (ii), (iii).  Then, for every $m\in \mathbb N$, the sequence $n\mapsto \kappa_m(S_n)$ becomes eventually linear. More precisely, for a  sufficiently large integer $n_1(m)$, we have
$$
\kappa_m(S_n) = 2^{-m} (w_m n + b_m), \qquad n>n_1(m),
$$
where $w_m\in \mathbb Z$ and $b_m\in \mathbb Z$ are integers depending  only on $m$ and $c_1,\ldots, c_d, \lambda_1,\ldots,\lambda_d$.
\end{thmalpha}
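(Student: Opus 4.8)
The plan is to expand the cumulant $\kappa_m(S_n)$ combinatorially and show that each contributing term is eventually an affine function of $n$. Writing $\cos(2\pi a_k\omega) = \tfrac12(e^{2\pi i a_k \omega} + e^{-2\pi i a_k \omega})$, one has $S_n = \tfrac12 \sum_{k=1}^n \sum_{\epsilon_k \in \{\pm 1\}} e^{2\pi i \epsilon_k a_k \omega}$. The $m$-th cumulant is a fixed universal polynomial in the moments $\mathbb{E}[S_n^j]$, $j \le m$, and each such moment is $2^{-m}$ times a sum, over all sign-vectors and index-tuples, of the Fourier coefficients $\int_0^1 e^{2\pi i (\sum_j \epsilon_j a_{k_j})\omega}\,d\omega$, which is $1$ precisely when the signed sum $\sum_j \epsilon_j a_{k_j}$ vanishes and $0$ otherwise. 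Hence $\kappa_m(S_n)$ is $2^{-m}$ times a signed count of \emph{vanishing signed sums} $\sum_{j} \epsilon_j a_{k_j} = 0$ with indices $k_j \le n$ and the number of terms bounded by $m$; the universal cumulant combinatorics (e.g.\ the inclusion–exclusion/partition formula, cf.\ \cite[Section~3]{peccati_taqqu_book_wiener_chaos}) only fixes the combinatorial weights attached to each such solution and never creates new ones. So the whole problem reduces to: understand the structure of solutions of $\sum_{j=1}^{2\ell} \epsilon_j a_{k_j} = 0$ (after pairing, an even number of terms, $2\ell \le m$, is forced for non-trivial contributions once $a_k \ge 1$ are positive — more precisely all terms are nonzero so at least one $+$ and one $-$ sign appear), and show the number of such solutions with all indices $\le n$ grows affinely in $n$.

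The key step is a \emph{bounded-gap} or \emph{finiteness} lemma: there is a constant $D = D(m, c_1,\dots,c_d,\lambda_1,\dots,\lambda_d)$ such that in any vanishing signed sum $\sum \epsilon_j a_{k_j} = 0$ with at most $m$ terms, the spread $\max_j k_j - \min_j k_j$ is at most $D$. This is where the dominant root condition~\eqref{eq:dominant_root_condition} enters: from the Binet-type representation~\eqref{eq:a_n_recursive_binet_representation}, $a_k = c_1\lambda_1^k(1 + O((\rho/\lambda_1)^k))$ with $\lambda_1 > \rho$, so $a_k$ grows like a geometric progression with ratio $\lambda_1 > 1$; if the largest index $k_{\max}$ in the sum exceeds the second-largest by more than $D$, then the single term $a_{k_{\max}}$ dominates the sum of the absolute values of all remaining (at most $m-1$) terms, since $a_{k_{\max}} \gg m \cdot \max(\text{remaining } a_{k_j})$, contradicting cancellation. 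Thus every vanishing signed sum is, up to a global index shift $k_j \mapsto k_j + t$, one of \emph{finitely many} ``shapes''; and crucially, because $a_{k+t}$ satisfies the fixed linear recursion from (i), whether a given shape remains a vanishing signed sum after shifting by $t$ is eventually independent of $t$ — either it vanishes for all large $t$ or for none (one checks the shifted signed sum is a fixed $\mathbb Z$-linear combination of $\lambda_1^t,\dots,\lambda_d^t$ via~\eqref{eq:a_n_recursive_binet_representation}, and such an exponential polynomial either vanishes identically or has finitely many zeros). Therefore, for each admissible shape, the number of its translates fitting inside $\{1,\dots,n\}$ is $n - (\text{const})$ for all large $n$, i.e.\ affine in $n$; summing the finitely many shapes with their (integer) combinatorial cumulant weights gives $\kappa_m(S_n) = 2^{-m}(w_m n + b_m)$ for $n > n_1(m)$, with $w_m, b_m \in \mathbb Z$.

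The main obstacle is the bounded-gap lemma, and more delicately the claim that ``vanishing after a shift'' is eventually $t$-independent: the Binet representation has possibly irrational or complex algebraic $c_j$ and $\lambda_j$, so one must argue at the level of the exponential polynomial $t \mapsto \sum_j \epsilon_j a_{k_j + t} = \sum_{i=1}^d \big(\sum_j \epsilon_j c_i \lambda_i^{k_j}\big)\lambda_i^t$ and invoke that a nonzero exponential polynomial with distinct frequencies $\lambda_1,\dots,\lambda_d$ has only finitely many integer zeros (a Skolem–Mahler–Lech type fact, but here elementary since the $\lambda_i$ are the fixed roots of one irreducible polynomial and there are only finitely many shapes to consider). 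A secondary technical point is handling the internal cancellation structure: a vanishing signed sum need not have all indices close together — it could split into several ``clusters'' each of bounded spread but far apart — but then within each cluster the signed sub-sum of the top few terms must itself be small, and iterating the dominance argument forces each cluster's contribution to vanish separately, reducing to the single-cluster case with fewer terms; one sets up the induction on $m$ accordingly. Once these structural facts are in place, the passage to cumulants is purely formal, using only the additivity-free combinatorial moment-to-cumulant formula and the fact that all arising coefficients are integers because the only non-integer factor $2^{-m}$ has been pulled out.
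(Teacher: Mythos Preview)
Your overall strategy is right --- expand the cumulant combinatorially, reduce to signed zero-sum configurations, and show these organize into finitely many translation-invariant shapes --- and this is also the paper's route. But there is a genuine gap, sitting exactly where you flag the ``main obstacle.''

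The bounded-spread claim as you state it is false: there is no $D$ such that every vanishing signed sum $\sum_{j=1}^{m} \epsilon_j a_{k_j} = 0$ has $\max_j k_j - \min_j k_j \le D$. For the Fibonacci sequence, two copies of the relation $F_{k+2} - F_{k+1} - F_k = 0$ placed at arbitrarily distant $k$'s give a six-term zero-sum tuple of arbitrary spread. You notice this (``clusters \ldots\ far apart'') and correctly observe that each cluster must then vanish separately; but the step you call ``reducing to the single-cluster case with fewer terms'' is not a reduction for the \emph{count}. The number of such multi-cluster zero-sum tuples in $\{1,\ldots,n\}^m$ grows like a higher power of $n$ (there are $\sim n^2$ ways to place two independent $3$-term clusters), so the raw zero-sum count is \emph{not} affine in $n$, contrary to the reduction you announce. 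What is affine is the \emph{cumulant-weighted} count, and the whole content of the theorem is that the cumulant weights kill the multi-cluster contributions exactly. (Incidentally, an even number of terms is not forced either: $F_{k+2}-F_{k+1}-F_k=0$ has three.)

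This cancellation mechanism is what your plan leaves vague. The paper makes it precise by writing $\kappa_m(S_n) = 2^{-m}\sum_T \mathrm{mult}(T)$ with $\mathrm{mult}(T) = \sum_{\pi \in \mathcal U_T} \mu(\pi,\hat 1)$, a M\"obius sum over the poset $\mathcal U_T$ of $T$-zero-sum partitions of $[m]$, and then proving (via a crosscut argument on the partition lattice) that $\mathrm{mult}(T) = 0$ whenever $T$ has a large gap between consecutive sorted indices --- because such a gap forces every minimal element of $\mathcal U_T$ to refine the two-block partition $\{J_1,J_2\}$ induced by the gap, so no subfamily of minimal elements can join to $\hat 1$. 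This lattice computation is what replaces your ``induction on $m$''; it is not automatic from the moment-to-cumulant formula and is the heart of the proof. Once only bounded-gap tuples survive, your shape-counting and shift-invariance argument is fine; the paper uses the direct equivalence $\sum_j \epsilon_j a_{i_j} = 0 \Leftrightarrow \sum_j \epsilon_j \eta^{i_j} = 0$ for large indices (via Galois conjugacy and the dominant-root estimate) rather than a Skolem--Mahler--Lech appeal, which is cleaner but not essentially different.
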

A proof of Theorem~\ref{theo:fibonacci_general} will be given in Sections~\ref{sec:combinatorial_formula_cumulants} and~\ref{sec:proof_fibonacci_case}. Section~\ref{sec:combinatorial_formula_cumulants} contains a general combinatorial formula for the cumulants of $S_n$ (valid for every sequence $(a_k)_{k\geq 1}$ of natural numbers) that may be of independent interest. The proof of Theorem~\ref{theo:fibonacci_general} is constructive in the sense that it gives an algorithm to compute $w_m$, $b_m$, and $n_1(m)$ for a given $m\in \N$.

\begin{example}
In the Fibonacci case $a_k = F_k$,  brute-force calculations give, for sufficiently large $n\in\mathbb N$,
\[
\kappa_1(S_n) = 0,
\quad
\kappa_2(S_n) = \frac{n}{2}+1,
\quad
\kappa_3(S_n) = \frac{3}{2}\,n,
\quad
\kappa_4(S_n) = \frac{45n - 106}{8},
\quad
\kappa_5(S_n) = 20 n - \frac{2145}{16}.
\]
For $a_k= 2^k$, formulas for $\kappa_m(S_n)$ with $m=1,\ldots, 7$ (and sufficiently large $n$) were obtained in~\cite[p.~550]{agkpr}, but no proof of eventual linearity for general $m$ has been given there. In both cases, the linear slope of $\kappa_3(S_n)$ is non-zero, which should be contrasted to the value $\widetilde{\kappa}_3 = 0$ corresponding to the independent model.
\end{example}

\subsection{Open problems}

We conclude this section with some open problems, and suggestions for further research.

\begin{problem}
As our Theorems \ref{th2} and \ref{theo:fibonacci_general} show, when $\lim_{k \to \infty} \frac{a_{k+1}}{a_k} =: \eta$ is allowed to be an algebraic number, then the asymptotic order of the cumulants of the lacunary trigonometric sum (as $n \to \infty$) can differ significantly from the asymptotic behavior of the corresponding independent model. However, we believe that for any given $\eta>1$ (including integers, rationals and algebraic numbers) one can construct a lacunary sequence $(a_k)_{k \geq 1}$ such that $\lim_{k \to \infty} \frac{a_{k+1}}{a_k} = \eta$, and such that the asymptotic behavior of the cumulants of the lacunary trigonometric sum coincides with the behavior of the independent model. The existence of such a sequence can probably most easily be shown in a randomized way (random perturbation of a deterministic sequence with asymptotic ratio limit $\eta$), in the spirit of Theorem D of \cite{agkpr}; an explicit deterministic construction is probably much more difficult.
\end{problem}

\begin{problem}
Our Theorem  \ref{theo:fibonacci_general} gives a general framework for the asymptotic behavior of lacunary trigonometric sums for sequences $(a_k)_{k \geq 1}$ that satisfy a linear recurrence relation that is associated with a Perron number. It would be interesting to also study ``simple'' sequences for which $\lim_{k \to \infty} \frac{a_{k+1}}{a_k} = \eta > 1$ is algebraic, but not a Perron number. A natural and particularly  interesting example are sequences of the type $( \lfloor \eta^k \rfloor)_{k \geq 1}$, where $\eta>1$ is algebraic but not Perron. We conjecture that in this particular case, the cumulants of the lacunary trigonometric sum grow linearly as $n \to \infty$, but with a growth factor which is different from the one in the corresponding independent model. 
\end{problem}

\begin{problem}
We restate a problem from \cite{agkpr}. That paper studied large deviations principles (LDPs) for lacunary trigonometric sums, and also observed a high degree of ``arithmetic sensitivity'' in the precise way how such lacunary sums satisfy an LDP. It was conjectured in \cite{agkpr} that whenever $\lim_{k \to \infty} \frac{a_{k+1}}{a_k} = \eta > 1$ for some transcendental $\eta$, then the lacunary trigonometric sum satisfies an LDP with exactly the same rate function as for the corresponding independent model. Such a result would be an analogue of Theorem \ref{th1} in the present paper. It is possible that some ideas from the present paper can be used to approach this problem, but on a technical level there are substantial differences between the setup of the present paper and the LDP setup, and additional ideas would be necessary to settle the problem. In this context, we believe that a version of Theorem \ref{theo:fibonacci_general} of the present paper should also carry over to the LDP setup, in the sense that for lacunary sequences $(a_k)_{k \geq 1}$ generated by a linear recurrence relation as in Theorem \ref{theo:fibonacci_general}, the trigonometric sums should satisfy a large deviations principle, but with a rate function which is in general different from the one for the corresponding independent model.
\end{problem}

\section{Preparations for the proof of Theorem \ref{th1}} \label{sec_2}

We start with a few technical ingredients that shall be used later in the proof of the first main result, Theorem \ref{th1}.

\begin{lemma} \label{lemma_poly}
For given $m, h \in\mathbb N$, let $\mathcal{P}:=\mathcal{P}_{m,h}$ denote the class of functions on $\mathbb R^h$ of the form
$$
p(y_1, y_2, y_3, \dots, y_h) = b_1 y_1^{e_1} + b_2 y_2^{e_2} + b_3 y_3^{e_3} + \dots+ b_h y_h^{e_h},
$$
where $0 = e_1 <  e_2 <  \dots < e_h$ are integers, and $b_1, \dots, b_h\in\mathbb Z$ are such that $|b_i| \leq m$ for $1 \leq i \leq h$, and where additionally we require that $(b_1, \dots, b_h) \neq (0,\dots,0)$. Let $\eta > 1$ be transcendental. Then there exist constants $\delta>0$ and $\varepsilon>0$, depending only on $\eta,m,h$, such that
\begin{equation} \label{bounded_away}
|p(y_1, y_2, y_3, \dots, y_h)| \geq \varepsilon
\end{equation}
for all $p \in \mathcal{P}$, and for all $(y_1,y_2, \dots, y_h) \in \mathbb{R}^{h}$ which have the property that
\begin{equation} \label{y_assumption}
\frac{y_j^{e_j}}{y_i^{e_i}} \in \left[ \left( \eta - \delta \right)^{e_j-e_i}, \left( \eta + \delta \right)^{e_j-e_i} \right] \qquad \text{for all $1 \leq i < j \leq h$}.
\end{equation}
\end{lemma}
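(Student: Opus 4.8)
The plan is to argue by strong induction on $h$. For $h=1$ the elements of $\mathcal P_{m,1}$ are the nonzero constant functions $p\equiv b_1$, so $|p|\ge 1$ and any $\delta$ works. For the inductive step, fix $h\ge 2$, assume the statement with constants $\delta_{h'},\varepsilon_{h'}$ for every $h'<h$ (note that shrinking $\delta$ only shrinks the set of admissible $y$, so each inductive bound persists for any $\delta\le\delta_{h'}$), and put $\varepsilon_*:=\min_{1\le h'<h}\varepsilon_{h'}>0$. Given $p\in\mathcal P_{m,h}$ as in the statement, let $\ell$ be the largest index with $b_\ell\ne 0$. If $\ell<h$, then $p$ does not involve $y_{\ell+1},\dots,y_h$, belongs to $\mathcal P_{m,\ell}$, and inherits \eqref{y_assumption} on its first $\ell$ coordinates, so the inductive hypothesis gives $|p|\ge\varepsilon_\ell$ whenever $\delta\le\delta_\ell$. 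Hence we may assume $b_h\ne 0$; moreover $e_1=0$ gives $y_1^{e_1}=1$, so \eqref{y_assumption} with $i=1$ forces $y_j^{e_j}\in[(\eta-\delta)^{e_j},(\eta+\delta)^{e_j}]$, in particular $y_j^{e_j}>0$, for every $j$.

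The heart of the argument is a clustering of the exponents governed by a large integer $C_0$, to be fixed at the very end and depending only on $\eta,m,h$ (through $\varepsilon_*$). Consider the consecutive gaps $g_j:=e_j-e_{j-1}\ge 1$, $2\le j\le h$. \emph{If all gaps are $<C_0$}, then $e_h<(h-1)C_0$, so the pair $(b_\bullet,e_\bullet)$ ranges over a finite set; for each such pair the limit value $Q(\eta)=b_1+b_2\eta^{e_2}+\dots+b_h\eta^{e_h}$ is a nonzero integer polynomial evaluated at the transcendental $\eta$, hence nonzero, and since $|p(y)-Q(\eta)|\le mh\big((\eta+\delta)^{(h-1)C_0}-(\eta-\delta)^{(h-1)C_0}\big)\to 0$ as $\delta\to 0$, a small enough $\delta$ produces a positive lower bound for $|p(y)|$. \emph{If some gap is $\ge C_0$}, let $j^*$ be the largest such index; then all gaps strictly above $e_{j^*}$ are $<C_0$, so the ``top block'' $\{e_{j^*},\dots,e_h\}$ spans a range $<(h-1)C_0$. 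Write $p=A+B$ with $A=\sum_{j<j^*}b_j y_j^{e_j}$ and $B=\sum_{j\ge j^*}b_j y_j^{e_j}$, and let $i^*\ge j^*$ be the least index with $b_{i^*}\ne 0$. Factoring $B=y_{i^*}^{e_{i^*}}\big(b_{i^*}+\sum_{i^*<j\le h}b_j\,y_j^{e_j}/y_{i^*}^{e_{i^*}}\big)$, the bracketed factor differs by at most $mh\big((\eta+\delta)^{(h-1)C_0}-(\eta-\delta)^{(h-1)C_0}\big)$ from the value at $\eta$ of a member of $\mathcal P_{m,h''}$ with $h''=h-i^*+1\le h-1$ terms and nonzero constant term $b_{i^*}$; the inductive hypothesis, applied at the point $(\eta,\dots,\eta)$ which satisfies \eqref{y_assumption} for every $\delta$, bounds that value below by $\varepsilon_*$, so $|B|\ge\tfrac12\varepsilon_*\,y_{i^*}^{e_{i^*}}$ once $\delta$ is small. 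For $A$, the relations \eqref{y_assumption} for the pairs $(j,i^*)$ with $j<j^*\le i^*$ give $y_j^{e_j}\le(\eta-\delta)^{e_j-e_{i^*}}y_{i^*}^{e_{i^*}}$; since $e_j\le e_{j^*-1}=e_{j^*}-g_{j^*}\le e_{i^*}-C_0$ and the $e_j$ ($j<j^*$) are distinct, summing the resulting geometric series yields $|A|\le m\,y_{i^*}^{e_{i^*}}(\eta-\delta)^{-C_0}\big/\big(1-(\eta-\delta)^{-1}\big)$. Now choose $C_0$ (depending only on $\eta,m$ and the already fixed $\varepsilon_*$) so large that, for $\delta\le(\eta-1)/2$, this is $\le\tfrac14\varepsilon_*\,y_{i^*}^{e_{i^*}}$; then $|p|\ge|B|-|A|\ge\tfrac14\varepsilon_*\,y_{i^*}^{e_{i^*}}\ge\tfrac14\varepsilon_*$. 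Taking $\delta_h$ to be the minimum of $(\eta-1)/2$, of $\min_{h'<h}\delta_{h'}$, and of the two smallness thresholds on $\delta$ arising above, and $\varepsilon_h$ the minimum of $\tfrac14\varepsilon_*$, the lower bound from the finite-configuration case, and $\min_{h'<h}\varepsilon_{h'}$, closes the induction.

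The place where the argument is genuinely delicate is exactly the bookkeeping just sketched: a one-step clustering argument would need $C_0$ chosen large relative to $\min|R(\eta)|$ over the finitely many polynomials $R$ with exponents below $(h-1)C_0$, yet that minimum itself deteriorates as $C_0$ grows, which is a circular requirement. Routing the lower bound for the top block $B$ through the \emph{inductive hypothesis} — which supplies a positive constant $\varepsilon_*$ depending only on $\eta$, $m$, and a term count strictly below $h$, hence \emph{not} on $C_0$ — is precisely what decouples the choice of $C_0$ from the ``continuity in $\delta$'' constants and lets one fix $C_0$ first and choose $\delta$ afterwards. The remaining ingredients (the two continuity estimates in $\delta$, the geometric summation for $A$, and the finite case) are elementary and I would not dwell on them.
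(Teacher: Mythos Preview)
Your proof is correct and follows essentially the same route as the paper: induction on $h$, a case split according to whether all exponent gaps are below a threshold $C_0$ (finiteness + transcendence + continuity) or some gap is large (split at the largest big gap, control the lower part crudely, and invoke the inductive $\varepsilon_*$ for the upper part so that $C_0$ can be fixed independently of $\delta$). The only cosmetic difference is that the paper applies the inductive hypothesis \emph{directly} to the top-block ratios $y_j^{e_j}/y_{i^*}^{e_{i^*}}$---which already satisfy \eqref{y_assumption} once $\delta_h\le\min_{h'<h}\delta_{h'}$---and thereby avoids your extra continuity step for $B$; your detour through the value at $(\eta,\dots,\eta)$ works but is unnecessary.
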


\begin{proof}
We assume that $\eta>1$ is transcendental, and that $m \geq 1$ is given and fixed. We will argue by induction over $h$.
\vskip 1mm
\noindent \emph{Step 1.} For $h=1$ the conclusion is trivial, since $\mathcal{P}_{m,1}$ is merely the class of constant functions. Indeed, in \eqref{bounded_away} we have $|p(y_1)| = |b_1|$ for any $y_1\in\mathbb R$ and with a non-zero integer $b_1$; note that $e_1=0$ by assumption. So the modulus of functions is always bounded below by $1$.
\vskip 1mm
\noindent \emph{Step 2.} Let $h\in \mathbb N$, $h\geq 2$. Assume that the conclusion of the lemma is true for $k=1, \dots, h-1$. We need to establish its validity for $h$. We can assume that $b_h \neq 0$, since otherwise the induction hypothesis can be directly applied.

So we can assume that for $k \in\{1, \dots, h-1\}$ there always exist suitable constants $\delta_k>0$ and $\varepsilon_k>0$ for which the conclusion of the lemma holds (we suppress the dependence of $\delta_k$ and $\varepsilon_k$ on $\eta$ and $m$, since they are assumed to be fixed). We can assume w.l.o.g.\ that all constants $\delta_1, \delta_2, \dots, \delta_{h-1}$ have been chosen smaller than $(\eta-1)/2$, and that $\delta_h$ will also be chosen smaller than $(\eta-1)/2$, so that we have
\begin{equation} \label{etadelta}
\eta - \delta_k \geq \frac{\eta}{2} + \frac{1}{2} > 1, \qquad \text{for all $k=1, \dots, h$}.
\end{equation}
Note that in view of \eqref{y_assumption} this implies for the considered variables the lower bound $1$, i.e.,
  \[
    \forall j\in\{2,\dots,h\}: \quad y_j \geq 1;
  \]
in particular, all variables $y_2,\dots,y_h$ that we consider are positive. Let $c := c(h) \in\mathbb N$ be an integer large enough so that
\begin{equation} \label{choice_of_c}
\left( \frac{\eta}{2} + \frac{1}{2} \right)^c > 2 m h  \left(\max_{1 \leq k \leq h-1} \frac{1}{\varepsilon_k}\right).
\end{equation}
We now distinguish two cases:\\

\begin{itemize}
 \item{Case 1}: Among the exponents $e_1 < \dots < e_h$ there exists an index $r$ such that $e_{r+1} - e_r \geq c$. Assume that $r$ is the largest such index, i.e., that $e_{r+2} - e_{r+1} < c$, $e_{r+3} - e_{r+2} < c$, etc.\ To cover Case 1 we can pick $\delta_h = \min_{1 \leq k \leq h-1} \delta_k$ (but we may need to reduce $\delta_h$ in the second step, in order to cover Case 2 as well; thus during the Case 1 analysis we will work with the assumption that $\delta_h \leq \min_{1 \leq k \leq h-1} \delta_k$). Assume that we are given $(y_1,y_2, \dots, y_h) \in \mathbb{R}^{h}$ which satisfy
 \begin{equation} \label{y_assumption_mod}
\frac{y_j^{e_j}}{y_i^{e_i}} \in \left[ \left( \eta - \delta_h \right)^{e_j-e_i}, \left( \eta + \delta_h \right)^{e_j-e_i} \right] \qquad \text{for all $1 \leq i < j \leq h$}.
\end{equation}
 Then we have
 \begin{eqnarray}
 & & \left| b_1 y_1^{e_1} +  \dots + b_h y_h^{e_h} \right| \nonumber\\
 & \geq & - \left| b_1 y_1^{e_1} + \dots + b_r y_r^{e_r} \right| + \left| b_{r+1} y_{r+1}^{e_{r+1}} + \dots + b_h y_h^{e_h} \right| \nonumber\\
 & \geq & - m r y_r^{e_r} + y_{r+1}^{e_{r+1}}  \left| b_{r+1}  \frac{y_{r+1}^{e_{r+1}}}{y_{r+1}^{e_{r+1}}}  +  b_{r+2} \frac{y_{r+2}^{e_{r+2}}}{y_{r+1}^{e_{r+1}}} + \dots + b_h \frac{y_h^{e_h}}{y_{r+1}^{e_{r+1}}} \right| . \label{continue}
 \end{eqnarray}
By setting
$$
c_j := b_{r+j}, \qquad f_j := e_{r+j} - e_{r+1}, \qquad z_j :=  \left( \frac{y_{r+j}^{e_{r+j}}}{y_{r+1}^{e_r+1}}  \right)^{1/f_j}, \qquad \text{for $1 \leq j \leq h-r$,}
$$
we can write
\begin{equation} \label{continue_2}
\left|b_{r+1}  \frac{y_{r+1}^{e_{r+1}}}{y_{r+1}^{e_{r+1}}}  +  b_{r+2} \frac{y_{r+2}^{e_{r+2}}}{y_{r+1}^{e_{r+1}}} + \dots + b_h \frac{y_h^{e_h}}{y_{r+1}^{e_{r+1}}}  \right| = \left| c_{1}  z_{1}^{f_{1}}  +  c_2 z_{2}^{f_2} + \dots + c_{h-r}  z_{h-r} ^{f_{h-r}} \right|.
\end{equation}
Here $0 = f_1 < f_2 < \dots < f_{h-r}$ are integers, we have $|c_j| \leq m$ for all $j$, and we also have $(c_1, \dots,  c_{h-r}) \neq (0, \dots, 0)$ since we assumed $0 \neq b_h = c_{h-r}$. Furthermore, as a consequence of \eqref{y_assumption_mod} we have
$$
\frac{z_j^{f_j}}{z_i^{f_i}} = \frac{y_{r+j}^{e_{r+j}}}{y_{r+i}^{e_{r+i}}} \in \left[ \left( \eta - \delta_h \right)^{e_{r+j}-e_{r+i}}, \left( \eta + \delta_h \right)^{e_{r+j}-e_{r+i}} \right]  \qquad \text{for $1 \leq i < j \leq h-r$},
$$
where we note that
\begin{eqnarray*}
 \left[ \left( \eta - \delta_h \right)^{e_{r+j}-e_{r+i}}, \left( \eta + \delta_h \right)^{e_{r+j}-e_{r+i}} \right]  & = & \left[ \left( \eta - \delta_h \right)^{f_j-f_i}, \left( \eta + \delta_h \right)^{f_j-f_i} \right] \\
 & \subset & \left[ \left( \eta - \delta_{h-r} \right)^{f_j-f_i}, \left( \eta + \delta_{h-r} \right)^{f_j-f_i} \right],
\end{eqnarray*}
since $\delta_h \leq \delta_{h-r}$ by definition, and thus
$$
\frac{z_j^{f_j}}{z_i^{f_i}}  \in \left[ \left( \eta - \delta_{h-r} \right)^{f_j-f_i}, \left( \eta + \delta_{h-r} \right)^{f_j-f_i} \right] \qquad \text{for $1 \leq i < j \leq h-r$}.
$$
Accordingly, we are in a situation where our induction hypothesis can be applied, and we obtain
$$
\left| c_{1}  z_{1}^{f_{1}}  +  c_2 z_{2}^{f_2} + \dots + c_{h-r}  z_{h-r} ^{f_{h-r}} \right|  \geq \varepsilon_{h-r}.
$$
Continuing from \eqref{continue}, using $e_{r+1} - e_r \geq c$ together with \eqref{etadelta}, \eqref{choice_of_c}, \eqref{continue_2}, and the fact that $y_r^{e_r} \geq 1$,  we have
\begin{eqnarray*}
 \left| b_1 y_1^{e_1} + \dots  + b_h y_h^{e_h} \right|  & \geq & - m r y_r^{e_r} + y_{r+1}^{e_{r+1}} \varepsilon_{h-r}  \\
 & \geq& \left(- m r +  \left( \eta - \delta_h \right)^{e_{r+1} - e_r} \varepsilon_{h-r} \right) y_r^{e_r} \\
  & \geq& \left(- m r +  \left( \eta - \delta_h \right)^c \varepsilon_{h-r}   \right) y_r^{e_r} \\
  & \geq& \left(  - m r +  \left(  \frac{\eta}{2} + \frac{1}{2} \right)^c \varepsilon_{h-r} \right) y_r^{e_r} \\
  & \geq & \left(- m h + 2 m h \right) y_r^{e_r} \\
  & \geq & m h \geq 1.
\end{eqnarray*}
This means that
$$
\left| b_1 y_1^{e_1} + \dots + b_h y_h^{e_h} \right| \geq 1
$$
and we have the desired result in Case 1 (where, judging from the Case 1 analysis alone, we could pick $\varepsilon_h = 1$; however, the actual $\varepsilon_h$ will need to be smaller, following the Case 2 analysis).\\

\item{Case 2:} Among the exponents $e_1 < \dots < e_h$ there does not exists an index $r$ such that $e_{r+1} - e_r \geq c$. This means that $e_h \leq c h$ (recall again that $e_1=0$ by assumption), and thus the class $\mathcal{P}$ contains only finitely many different functions which fall under Case 2. Let us write $\mathcal{P}^*$ for those functions $p \in \mathcal{P}$ for which $e_h \leq ch$. Since $\eta$ is transcendental, we have
$$
p(\eta, \dots, \eta) \neq 0,
$$
and actually, since $\mathcal{P}^*$ is finite,
$$
\min_{p \in \mathcal{P}^*} \left| p(\eta, \dots, \eta)\right| >  0.
$$
Assumption \eqref{y_assumption} implies that, once $\delta_h>0$ is chosen,  we can restrict ourselves to consider only $(y_1, \dots, y_h)$ which satisfy
$$
(y_1, \dots, y_h) \in \left[ \eta - \delta_h, \eta + \delta_h  \right]^{ch}
$$
(recall once more that $e_1=0$). Since the class $\mathcal{P}^*$ is finite, and since $p(y_1, y_2, \dots, y_h)$ depends on $y_1,y_2, \dots, y_h$ in a continuous way, by choosing $\delta_h$ sufficiently small it is possible to ensure that
\begin{equation} \label{pick_delta}
\min_{p \in \mathcal{P}^*} \min_{(y_1, \dots, y_h) \in \left[ \eta - \delta_h, \eta + \delta_h \right]^{ch}} \left| p(y_1, \dots, y_h)\right| >  0.
\end{equation}
Thus we can pick for $\delta_h$ a value for which \eqref{pick_delta} holds true, and for which also $\delta_h \leq \min (\delta_1, \dots, \delta_{h-1})$, so that the requirement from the Case 1 analysis is met. Concerning $\varepsilon$, in the Case 1 analysis it was admissible to choose $\varepsilon_h =1$, so with the choice of
$$
\varepsilon_h := \min \left(1, ~\min_{p \in \mathcal{P}^*} \min_{(y_1, \dots, y_h) \in \left[ \eta - \delta_h, \eta + \delta_h \right]^{ch}} \left| p(y_1, \dots, y_h)\right| \right) > 0
$$
we also cover the Case 2 analysis, and obtain the desired conclusion.

\end{itemize}

\end{proof}

\begin{lemma} \label{lemma1}
Assume that $(a_k)_{k \geq 1}$ satisfies the assumptions of Theorem \ref{th1} and let $m \in\mathbb N$. Then there exists a number $\ell\in\mathbb N$ such that the following holds: if
$$
S_{1,\ell} (\omega) := \sum_{k=1}^\ell \cos(2 \pi a_k \omega)\qquad\text{and}\qquad S_{\ell+1,n} (\omega) := \sum_{k=\ell+1}^n  \cos(2 \pi a_k \omega),
$$
then for all $u,v\in\mathbb N$ with $u+v \leq m$ and for all $n\in\mathbb N$, we have
$$
\mathbb{E} \big[S_{1,\ell}^u S_{\ell+1,n}^v\big] = \mathbb{E} \big[S_{1,\ell}^u\big] \mathbb{E} \big[S_{\ell+1,n}^v\big].
$$
\end{lemma}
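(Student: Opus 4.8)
The plan is to reduce the asserted moment factorization, via the elementary product–to–sum formula, to a statement about bounded integer linear relations among the $a_k$, and then to establish that statement using Lemma~\ref{lemma_poly}. First I would expand everything into single cosines: using $\prod_{i=1}^{r}\cos\theta_i=2^{-r}\sum_{\boldsymbol\sigma\in\{\pm1\}^r}\cos(\sum_i\sigma_i\theta_i)$ one writes
\[
S_{1,\ell}^{u}=\frac1{2^u}\sum_{\mathbf k\in\{1,\dots,\ell\}^u}\ \sum_{\boldsymbol\sigma\in\{\pm1\}^u}\cos\big(2\pi A_{\mathbf k,\boldsymbol\sigma}\,\omega\big),\qquad A_{\mathbf k,\boldsymbol\sigma}:=\sum_{i=1}^{u}\sigma_i a_{k_i},
\]
and likewise $S_{\ell+1,n}^{v}$ as a sum of cosines with frequencies $B_{\mathbf j,\boldsymbol\tau}:=\sum_i\tau_i a_{j_i}$, $j_i\in\{\ell+1,\dots,n\}$. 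Using $\int_0^1\cos(2\pi N\omega)\,d\omega=\mathbbm{1}_{\{N=0\}}$, $\int_0^1\cos(2\pi N\omega)\cos(2\pi M\omega)\,d\omega=\tfrac12(\mathbbm{1}_{\{N=M\}}+\mathbbm{1}_{\{N=-M\}})$, and the involution $\boldsymbol\tau\mapsto-\boldsymbol\tau$ (which permutes the summation set, fixes $A$ and flips $B$), a short computation gives
\[
\mathbb E\big[S_{1,\ell}^{u}S_{\ell+1,n}^{v}\big]-\mathbb E\big[S_{1,\ell}^{u}\big]\,\mathbb E\big[S_{\ell+1,n}^{v}\big]
=\frac1{2^{u+v}}\sum_{\mathbf k,\boldsymbol\sigma,\mathbf j,\boldsymbol\tau}\Big(\mathbbm{1}_{\{A=B\}}-\mathbbm{1}_{\{A=0\}}\,\mathbbm{1}_{\{B=0\}}\Big).
\]
Every summand is non-negative, so the difference vanishes iff, for each admissible $(\mathbf k,\boldsymbol\sigma,\mathbf j,\boldsymbol\tau)$ (at most $u$ terms with indices $\le\ell$, at most $v$ with indices $\ge\ell+1$, $u+v\le m$), the identity $A=B$ forces $A=B=0$. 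Grouping coefficients by index, $A-B=0$ is an integer relation $\sum_k d_k a_k=0$ with $\sum_k|d_k|\le u+v\le m$ whose support splits into indices $\le\ell$ (coefficients from $A$) and indices $\ge\ell+1$ (coefficients from $-B$), and $B\neq0$ puts an index of the support into $\{\ell+1,\ell+2,\dots\}$. So it suffices to choose $\ell=\ell(m)$ such that no nontrivial integer relation $\sum_k d_k a_k=0$ with $\sum_k|d_k|\le m$ has a nonzero coefficient at any index $\ge\ell+1$.

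For this I would prove more, namely that there is a threshold $T=T(\eta,m,(a_k))$ such that every such relation is supported in $\{1,\dots,T\}$; then $\ell:=T$ works. Take $\delta>0$ and $\varepsilon\in(0,1]$ to be the minima over $h\le m$ of the constants from Lemma~\ref{lemma_poly}, shrinking $\delta$ if needed so that $\eta-\delta>1$; let $K_0$ be such that $a_{k+1}/a_k\in[\eta-\delta,\eta+\delta]$ for $k\ge K_0$; and put $C_0:=\lceil\log(m/\varepsilon)/\log(\eta-\delta)\rceil$ and $B_0:=\max\big(K_0,\max\{k:a_k<(m/\varepsilon)a_{K_0}\}\big)$. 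The decisive input is: if $R=\sum_{k\in\mathcal I}d_k a_k=0$ has support $\mathcal I=\{k_1<\dots<k_h\}$ with $h\le m$, then for any index $k_i\ge K_0$ the ``upper part'' $\sum_{j\ge i}d_{k_j}a_{k_j}$, divided by $a_{k_i}$, takes the form $p(y_1,\dots)$ of Lemma~\ref{lemma_poly} with $e_l=k_{i+l-1}-k_i$ and $y_l^{e_l}=a_{k_{i+l-1}}/a_{k_i}$, and its hypothesis \eqref{y_assumption} holds because each ratio $a_{t+1}/a_t$ with $t\ge k_i\ge K_0$ lies in $[\eta-\delta,\eta+\delta]$; hence $|\sum_{j\ge i}d_{k_j}a_{k_j}|\ge\varepsilon a_{k_i}$. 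Combining this with the trivial bound $|\sum_{j<i}d_{k_j}a_{k_j}|\le m a_{k_{i-1}}$ and with $R=0$ (the two parts being opposite) yields $\varepsilon a_{k_i}\le m a_{k_{i-1}}$ for every $i\ge2$ with $k_i\ge K_0$; this forces $k_i-k_{i-1}\le C_0$ if $k_{i-1}\ge K_0$, and $k_i\le B_0$ if $k_{i-1}<K_0$. Using the displayed lower bound with $i=1$, a relation with all indices $\ge K_0$ would give $0=|R|\ge\varepsilon a_{k_1}>0$, impossible; hence the smallest support index $k_p$ with $k_p\ge K_0$ satisfies $p\ge2$, so $k_p\le B_0$ and all later gaps are $\le C_0$, whence $k_h\le B_0+(m-1)C_0=:T$.

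The step that really uses the transcendence of $\eta$, and the main obstacle, is this last ``connectedness'' argument. A naive approach — a first-block frequency has modulus $\le m a_\ell$, so it would be enough that every nonzero second-block frequency exceeds $m a_\ell$ — fails when $\eta$ is close to $1$: Lemma~\ref{lemma_poly} only gives a second-block frequency of modulus $\ge\varepsilon a_{\ell+1}$, and $a_{\ell+1}/a_\ell\to\eta$ can be smaller than $m/\varepsilon$. One must instead rule out that an actual bounded relation $\sum_k d_k a_k=0$ can bridge a (possibly small) index and an arbitrarily large one; a single gap estimate is too weak, but iterating it is not, since a long relation would force its ``large part'' to be simultaneously $\ge\varepsilon a_{(\text{large index})}$ and $\le m a_{(\text{small index})}$. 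For algebraic $\eta$ this breaks down — e.g.\ $a_{k+2}-3a_{k+1}+2a_k=0$ for all $k$ when $a_k=2^k+1$, or $F_{k+2}-F_{k+1}-F_k=0$ for the Fibonacci sequence — which is precisely the phenomenon behind the sharpness statements of Theorems~\ref{th2} and~\ref{theo:fibonacci_general}.
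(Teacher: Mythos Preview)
Your proof is correct and follows essentially the same strategy as the paper: reduce via the product-to-sum formula and orthogonality to a Diophantine statement about bounded integer relations $\sum_k d_k a_k=0$, then invoke Lemma~\ref{lemma_poly} together with a gap argument to rule out relations with a large index in their support. Your packaging differs only in organization: where the paper makes a single split at the largest gap and treats the two cases ``some gap $>c$'' versus ``all gaps $\le c$'' separately (using $\gamma=\inf_k a_{k+1}/a_k>1$ for the gap threshold), you iterate the upper/lower split at every support index $k_i\ge K_0$ to obtain $\varepsilon a_{k_i}\le m a_{k_{i-1}}$ and hence a uniform bound $C_0$ on all later gaps, yielding the slightly stronger conclusion that every such relation is supported in a fixed interval $\{1,\dots,T\}$.
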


\begin{proof}
We recall that 
\begin{equation} \label{gamma}
\lim_{k \to \infty} \frac{a_{k+1}}{a_k} = \eta >1. 
\end{equation}
Let $\delta_1, \dots, \delta_m>0$ and $\varepsilon_1, \dots, \varepsilon_m>0$ be the constants which are provided by Lemma \ref{lemma_poly} when applied for each $h \in\{ 1, \dots, m\}$, and define
\begin{equation} \label{delta_eps_def}
\delta := \min (\delta_1, \dots, \delta_m) \qquad\text{and} \qquad \varepsilon := \min (\varepsilon_1, \dots, \varepsilon_m).
\end{equation}
Then there exists $c:=c(m,\varepsilon)\in\mathbb N$ such that
\begin{equation} \label{c_def}
\gamma^c \geq \frac{2 m}{\varepsilon}, \qquad \text{where} \qquad \gamma := \inf_{k \geq 1} \frac{a_{k+1}}{a_k} > 1.
\end{equation}
Given $\delta$ and $c$, we now choose the number $\ell\in\mathbb N$ in the statement of Lemma \ref{lemma1} so large that
\begin{equation} \label{factor}
\frac{a_{k+1}}{a_k} \in \left[ \eta - \delta, \eta+\delta \right] \qquad \text{for all $k \geq \ell - c m$};
\end{equation}
this is possible since by assumption $\frac{a_{k+1}}{a_k} \to \eta$.

Let $u,v\in\mathbb N$ with $u+v \leq m$ and assume from now on that $n >  \ell$; the statement of Lemma \ref{lemma1} is trivial whenever $n \leq \ell$, since then the sum in the definition of $S_{\ell+1,n}$ is empty.

We have
\begin{eqnarray*}
&&\mathbb{E} \big[S_{1,\ell}^u S_{\ell+1,n}^v\big] \cr
& = & \int_0^1  \left( \sum_{k=1}^\ell \cos(2 \pi a_k \omega) \right)^u \left( \sum_{k=\ell+1}^n  \cos(2 \pi a_k \omega) \right)^v \,d\omega \cr
& = &  \int_0^1 \Bigg( \sum_{1 \leq k_1, \dots, k_u  \leq \ell} \prod_{i=1}^u\cos(2 \pi a_{k_i} \omega) \Bigg)\Bigg( \sum_{\ell+1 \leq k_{u+1} , \dots, k_{u+v} \leq n} \prod_{j=1}^v\cos(2 \pi a_{k_{u+j}} \omega) \Bigg)\,d\omega \cr
& = & \sum_{1 \leq k_1, \dots, k_u  \leq \ell} ~ \sum_{\ell+1 \leq k_{u+1} , \dots, k_{u+v} \leq n} \int_0^1 \prod_{i=1}^u\cos(2 \pi a_{k_i} \omega) \prod_{j=1}^v\cos(2 \pi a_{k_{u+j}} \omega)  \,d\omega
\end{eqnarray*}
Using Euler's formula $\cos(x) = 2^{-1}(e^{ix}+e^{-ix})$, we can establish the product-to-sum identity
 	\[
 		\prod_{i=1}^u\cos(2 \pi a_{k_i} \omega) \prod_{j=1}^v\cos(2 \pi a_{k_{u+j}} \omega) = \frac{1}{2^{u+v}} \sum_{\varepsilon\in\{\pm 1\}^{u+v}} e^{2\pi i \omega \sum_{j=1}^{u+v} \varepsilon_j a_{k_j}},
 	\]
which, together with the orthogonality of the trigonometric system, allows us to express the integral as the counting problem
\begin{eqnarray*}
\mathbb{E} \big[S_{1,\ell}^u S_{\ell+1,n}^v\big] & = & \frac{1}{2^{u+v}}  \sum_{\pm}^{(u+v)} \sum_{1 \leq k_1, \dots, k_u  \leq \ell} ~ \sum_{\ell+1 \leq k_{u+1} , \dots, k_{u+v} \leq n}  \mathbf{1} \left( \pm a_{k_1} \pm a_{k_2} \pm \dots \pm a_{k_{u+v}} = 0 \right);
\end{eqnarray*}
here $\mathbf{1}(\cdot)$ denotes an indicator function and the summation $\sum_{\pm}^{(u+v)}$ has to be understood as a sum over all $2^{u+v}$ many possible configurations of the $u+v$ many $\pm$ signs inside the indicator. Note that some of the indices $k_1, \dots, k_u$ can coincide, and that similarly some of the indices $k_{u+1}, \dots, k_{u+v}$ can coincide. The expression in the previous formula should be compared with
\begin{eqnarray*}
\mathbb{E} \big[S_{1,\ell}^u\big]  \mathbb{E} \big[S_{\ell+1,n}^v\big] & = & \frac{1}{2^{u+v}}  \left( \sum_{\pm}^{(u)} \sum_{1 \leq k_1, \dots, k_u \leq \ell} \mathbf{1} \left( \pm a_{k_1} \pm a_{k_2} \pm \dots \pm a_{k_{u}} = 0 \right) \right) \times \\
& & \qquad \times \left(  \sum_{\pm}^{(v)} \sum_{\ell+1 \leq k_{u+1} , \dots, k_{u+v} \leq n}  \mathbf{1} \left( \pm a_{k_{u+1}} \pm a_{k_{u+2}} \pm \dots \pm a_{k_{u+v}} = 0 \right) \right).
\end{eqnarray*}
A simple comparison between the two expressions shows that we clearly have
	\[
  		\mathbb{E} \big[S_{1,\ell}^u S_{\ell+1,n}^v\big] \geq \mathbb{E} \big[S_{1,\ell}^u\big]  \mathbb{E} \big[S_{\ell+1,n}^v\big],
	\]
and that the potential difference $\mathbb{E} [S_{1,\ell}^u S_{\ell+1,n}^v] - \mathbb{E} [S_{1,\ell}^u]  \mathbb{E} [S_{\ell+1,n}^v]$ would arise from the contribution of ``irreducible'' solutions of $ \pm a_{k_1} \pm a_{k_2} \pm \dots \pm a_{k_{u+v}} = 0$, i.e., such solutions of this equation which cannot be decomposed into a solution of $ \pm a_{k_1} \pm a_{k_2} \pm \dots \pm a_{k_{u}} = 0 $ which is combined with a solution of $\pm a_{k_{u+1}} \pm a_{k_{u+2}} \pm \dots \pm a_{k_{u+v}} = 0$. In other words, to prove that $\mathbb{E} [S_{1,\ell}^u S_{\ell+1,n}^v] = \mathbb{E} [S_{1,\ell}^u]  \mathbb{E} [S_{\ell+1,n}^v]$, we need to establish the following: \\

\emph{Claim: there does not exist any configuration of $\pm$ signs, and any set of indices $(k_1, \dots, k_{u+v})$ with $1 \leq k_1, \dots, k_u \leq \ell$ and $\ell+1 \leq k_{u+1}, \dots, k_{u+v} \leq n$ such that
$$
\pm a_{k_1} \pm \dots \pm a_{k_{u}} \neq 0 \qquad \text{and} \qquad \pm a_{k_{u+1}}  \pm \dots \pm a_{k_{u+v}} \neq 0,
$$
but (with the same configuration of the $\pm$ signs as in the equation above)
$$
 \pm a_{k_1} \pm \dots \pm a_{k_{u+v}} = 0.
$$}

We may assume in the sequel that $k_1, \dots, k_{u+v}$ are sorted in increasing order, i.e., $k_1 \leq k_2 \leq \dots \leq k_{u+v}$. Note again that we cannot rule out the case that some of these indices are equal. We distinguish two cases:\\

\noindent \textbf{Case 1.} There is a ``large gap'' in the index set, i.e., there exist two indices $k_r$ and $k_{r+1}$ for which $k_{r+1} - k_r > c$, where $c$ is the number from \eqref{c_def}.\\

\noindent\textbf{Case 2.} There is no ``large gap'' in the index set, i.e., for all $1 \leq r < u+v$ we have $k_{r+1} - k_r \leq c$.  \\

We shall now prove the claim from above in each of the two cases.\\

 \begin{itemize}
 \item {Proof of claim in Case 1:} To prove the claim, we assume that for a specific configuration of $\pm$ signs and for specific $k_1, \dots, k_{u+v}$ with $1 \leq k_1, \dots, k_u \leq \ell$ and $\ell+1 \leq k_{u+1}, \dots, k_{u+v} \leq n$ we have
$$
\pm a_{k_1} \pm \dots \pm a_{k_{u}} \neq 0 \qquad \text{and} \qquad \pm a_{k_{u+1}}  \pm \dots \pm a_{k_{u+v}} \neq 0.
$$
From this we want to conclude that (with the same choice of $\pm$ signs) we have
$$
 \pm a_{k_1} \pm \dots \pm a_{k_{u+v}} \neq 0.
$$
 Let $r\in\mathbb N$, $1 \leq r < u+v$, be the largest index such that
 	\[
		k_{r+1} - k_r > c.
	\]
This means that for all indices $s > r$,
	\begin{equation}\label{eq:bound on gaps}
		k_{s+1} - k_{s} \leq c.
	\end{equation}
Moreover, we have $k_{u+v} \geq \ell+1$, which is equivalent to
	\[
	 	k_{r+1} \geq \ell+1 - (k_{u+v}-k_{r+1}).
	\]
But using \eqref{eq:bound on gaps} successively,
	\begin{eqnarray*}
		k_{u+v}-k_{r+1} & = & k_{u+v} + \sum_{i=1}^{u+v-(r+2)} \Big( k_{u+v - i} - k_{u+v-i} \Big)  - k_{r+1} \cr
		& = & \sum_{i=0}^{u+v-(r+2)} \Big(k_{u+v-i}-k_{u+v-(i+1)}\Big)\cr
		& \stackrel{\eqref{eq:bound on gaps}}{\leq} & c (u+v-r-1).
	\end{eqnarray*}
This shows that
	\[
		k_{r+1} \geq \ell +1 - c (u+v-r-1) \geq \ell - cm,
	\]
where in the last bound we used that by assumption $u+v \leq m$;	
compare the bound in the previous display with the definition of $\ell$ in \eqref{factor}.

We now study the size of the expression
$$
\left| a_{k_{r+1}} \pm a_{k_{r+2}} \pm \dots \pm a_{k_{u+v}} \right|
$$
for a specific (fixed) configuration of $\pm$ signs. Diving by the smallest term leads to
\begin{equation} \label{can_be_written}
\left| 1 \pm \frac{a_{k_{r+2}}}{a_{k_{r+1}}} \pm \dots \pm \frac{a_{k_{u+v}}}{a_{k_{r+1}}} \right|.
\end{equation}
Now since $k_{r+1} \geq \ell - cm$ as noted above, we can use \eqref{factor}, which yields
\begin{equation} \label{lhs_1}
\frac{a_{k_{r+2}}}{a_{k_{r+1}}} \in \left[ (\eta-\delta)^{k_{r+2} - k_{r+1}} ,(\eta+\delta)^{k_{r+2} - k_{r+1}} \right],
\end{equation}
and similar estimates hold for the quotients $\frac{a_{k_{r+3}}}{a_{k_{r+1}}}$ etc., with the final one being
\begin{equation} \label{lhs_2}
\frac{a_{k_{u+v}}}{a_{k_{r+1}}} \in \left[ (\eta-\delta)^{k_{u+v} - k_{r+1}} ,(\eta+\delta)^{k_{u+v} - k_{r+1}} \right].
\end{equation}
Note that some of the indices $k_{r+1}, k_{r+2}, \dots, k_{u+v}$ might be equal so that the corresponding terms can be combined, such that \eqref{can_be_written} can be rewritten in the form
$$
b_1 y_1^{e_1}+ b_2  y_2^{e_2} + \dots b_h y_h^{e_h}
$$
for some suitable $h \leq u+v \leq m$. Here $b_0, \dots, b_h$ are  suitable coefficients which are all bounded in absolute value by $m$ (since certainly no more than $u+v \leq m$ indices can coincide), $0 = e_1 < e_2 < \dots < e_h$ suitable positive numbers, and $y_1, \dots, y_h$ are suitable numbers which arise as quotients such as those on the left-hand side of \eqref{lhs_1} and \eqref{lhs_2}. The fact that $y_1, \dots, y_h$ arise as such quotients guarantees that \eqref{y_assumption} is satisfied. Note that it is not possible here that $(b_0, \dots, b_h) = (0,\dots,0)$, since this would imply that $\pm a_{k_{u+1}}  \pm \dots \pm a_{k_{u+v}} = 0$, which is ruled out by assumption. Thus we are in a situation where we can apply Lemma \ref{lemma_poly}, which yields that
$$
\left| 1 \pm \frac{a_{k_{r+2}}}{a_{k_{r+1}}} \pm \dots \pm \frac{a_{k_{u+v}}}{a_{k_{r+1}}} \right| \geq \varepsilon,
$$
where $\varepsilon>0$ was defined in \eqref{delta_eps_def}. After multiplying with $a_{k_{r+1}}$, this gives
$$
\left| a_{k_{r+1}} \pm a_{k_{r+2}} \pm \dots \pm a_{k_{u+v}} \right| \geq \varepsilon a_{k_{r+1}}.
$$
On the other hand, we clearly have
$$
\left| \pm a_{k_1} \pm \dots \pm a_{k_r} \right| \leq a_{k_1} + \dots + a_{k_r}  \leq r a_{k_r} \leq m a_{k_r}.
$$
Since $k_{r+1} - k_r \geq c$ by assumption, using the definition of $c$ in \eqref{c_def}, we have
$$
a_{k_{r+1}} \geq \frac{\gamma^c a_{k_r}}{\varepsilon} \geq \frac{2m a_{k_r}}{\varepsilon}.
$$
Thus
\begin{eqnarray*}
\left| \pm a_{k_1} \pm \dots \pm a_{k_{u+v}} \right| & \geq & \left| a_{k_{r+1}} \pm a_{k_{r+2}} \pm \dots \pm a_{k_{u+v}} \right| - \left| \pm a_{k_1} \pm \dots \pm a_{k_r} \right| \\
& \geq & \varepsilon a_{k_{r+1}} - m a_{k_r} \\
& \geq & 2m a_{k_r} - m a_{k_r} > 0.
\end{eqnarray*}
Thus $ \pm a_{k_1} \pm \dots \pm a_{k_{u+v}} \neq 0$, as claimed.\\

\item{Proof of claim in Case 2:} Assume that for all $1 \leq r < u+v$ we have $k_{r+1} - k_r \leq c$. The proof in this case is similar to that of Case 1, but easier. Again our aim is to show that
\begin{equation} \label{eq_to_show}
a_{k_1} \pm a_{k_{2}} \pm \dots \pm a_{k_{u+v}} \neq 0.
\end{equation}
Since $k_{r+1} - k_r \leq c$ for all $r$, we clearly have
$$
k_1 \geq k_{u+v} - c (u+v) \geq \ell - cm.
$$
Thus we are in the regime where \eqref{factor} can be utilized. We can divide by $a_{k_1}$ and study
$$
\left| 1 \pm \frac{a_{k_{2}}}{a_{k_1}} \pm \dots \pm \frac{a_{k_{u+v}}}{a_{k_1}} \right|.
$$
Note, as above, that some of the indices $k_1, \dots, k_{u+v}$ may coincide, so overall we are again led to an expression of the form
$$
b_1 y_1^{e_1}+ b_2  y_2^{e_2} + \dots b_h y_h^{e_h}
$$
for some suitable $h \leq m$, in such a way that the assumptions of Lemma \ref{lemma_poly} are satisfied. Lemma \ref{lemma_poly} yields
$$
\left| 1 \pm \frac{a_{k_{2}}}{a_{k_1}} \pm \dots \pm \frac{a_{k_{u+v}}}{a_{k_1}} \right| \geq \varepsilon,
$$
which establishes \eqref{eq_to_show}.
\end{itemize}
Thus, what we claimed above is indeed true, and accordingly we have
	\[
		\mathbb{E} \big[S_{1,\ell}^u S_{\ell+1,n}^v\big] = \mathbb{E} \big[S_{1,\ell}^u\big] \mathbb{E}\big[S_{\ell+1,n}^v\big].
	\]
This proves the lemma.
\end{proof}

\begin{lemma} \label{lemma2}
Assume that $(a_k)_{k \geq 1}$ satisfies the assumptions of Theorem \ref{th1}. Let $m \geq 1$ be given. Let $\ell$ be the number which was constructed during the proof of Lemma \ref{lemma1}. Let $S_{\ell+1,n}$ be as in the statement of Lemma \ref{lemma1}, and let $\widetilde{S}_{\ell+1,n} = \sum_{k=\ell+1}^n \cos(2 \pi U_k)$, where $U_k\sim\mathrm{Unif}[0,1]$, $\ell+1 \leq k \leq n$, are independent. Then, for all $u \leq m$, we have
$$
\mathbb{E} (S_{\ell+1,n}^u) = \mathbb{E} (\widetilde{S}_{\ell+1,n}^u).
$$
\end{lemma}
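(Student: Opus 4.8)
The plan is to reduce the claimed identity to a purely arithmetic statement --- just as in the proof of Lemma \ref{lemma1} --- and then to settle that statement with the machinery already in place. First I would expand $\mathbb{E}[S_{\ell+1,n}^u]$ via Euler's formula, the product-to-sum identity, and the orthogonality of the trigonometric system, obtaining (exactly as in the proof of Lemma \ref{lemma1})
\[
\mathbb{E}\big[S_{\ell+1,n}^u\big] = \frac{1}{2^u} \sum_{\pm}^{(u)} \sum_{\ell+1 \leq k_1, \dots, k_u \leq n} \mathbf{1}\big( \pm a_{k_1} \pm \dots \pm a_{k_u} = 0 \big).
\]
For the independent side I would instead use the independence of $U_{\ell+1},\dots,U_n$: for any tuple the expectation $\mathbb{E}[\prod_{i=1}^u \cos(2\pi U_{k_i})]$ factorises over the distinct values among $k_1,\dots,k_u$, and for a single uniform $U$ Euler's formula gives $\mathbb{E}[\cos(2\pi U)^\mu] = 2^{-\mu}\,\#\{\vec\epsilon\in\{\pm1\}^\mu : \epsilon_1+\dots+\epsilon_\mu=0\}$. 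Collecting these factors over the index blocks of a tuple (the block sizes summing to $u$, so that the prefactors multiply to $2^{-u}$) yields
\[
\mathbb{E}\big[\widetilde{S}_{\ell+1,n}^u\big] = \frac{1}{2^u} \sum_{\pm}^{(u)} \sum_{\ell+1 \leq k_1, \dots, k_u \leq n} \mathbf{1}\big( \text{within each block of equal indices the $\pm$ signs sum to $0$} \big).
\]
Every block-balanced sign pattern automatically solves $\pm a_{k_1}\pm\dots\pm a_{k_u}=0$, so $\mathbb{E}[S_{\ell+1,n}^u]\geq\mathbb{E}[\widetilde S_{\ell+1,n}^u]$, with equality precisely when there is no ``extra'' solution: no sign pattern together with indices $\ell+1\leq k_1,\dots,k_u\leq n$ for which $\pm a_{k_1}\pm\dots\pm a_{k_u}=0$ but some equal-index block fails to cancel internally.

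It then remains to rule out extra solutions, and here I would re-run the Case~1 / Case~2 dichotomy from the proof of Lemma \ref{lemma1}, reusing its constants $c,\delta,\varepsilon,\gamma$ (and the $\ell$ already fixed there). Given a putative extra solution, I would sort the indices and group them into their distinct values $\ell<j_1<\dots<j_h\leq n$ with $h\leq u\leq m$, writing $\sigma_t$ for the signed sum of the (at most $m$) terms with index $j_t$; then $\sum_{t=1}^h\sigma_t a_{j_t}=0$, $\sum_{t=1}^h|\sigma_t|\leq u\leq m$, and $(\sigma_1,\dots,\sigma_h)\neq(0,\dots,0)$. After discarding the vanishing $\sigma_t$ one is left with a relation of the same shape with all coefficients non-zero, necessarily with $h\geq2$ since $a_j>0$. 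If some surviving gap satisfies $j_{t+1}-j_t\geq c$, I would bound the part below it by $(\sum_{s\leq t}|\sigma_s|)\,a_{j_t}\leq m\,a_{j_t}$, write the part above it (after dividing by $a_{j_{t+1}}$) in the form $b_1y_1^{e_1}+\dots+b_{h-t}y_{h-t}^{e_{h-t}}$ with $e_1=0$, $|b_i|\leq m$, $(b_i)\neq0$, whose variables satisfy \eqref{y_assumption} --- crucially, every index here already exceeds $\ell$, so \eqref{factor} governs all the ratios and no extra lower bound on indices is needed --- apply Lemma \ref{lemma_poly} to bound it below by $\varepsilon\,a_{j_{t+1}}$, and combine with $a_{j_{t+1}}\geq\gamma^c a_{j_t}\geq(2m/\varepsilon)\,a_{j_t}$ (from \eqref{c_def}) to get $|\sum_t\sigma_t a_{j_t}|\geq\varepsilon\,a_{j_{t+1}}-m\,a_{j_t}\geq m\,a_{j_t}>0$, a contradiction. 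If instead all surviving gaps are smaller than $c$, I would divide $\sum_t\sigma_t a_{j_t}$ by $a_{j_1}$, write the result again as $b_1y_1^{e_1}+\dots+b_hy_h^{e_h}$ with $e_1=0$, $|b_i|\leq m$, $(b_i)\neq0$ and \eqref{y_assumption} satisfied, and apply Lemma \ref{lemma_poly} directly to get $|\sum_t\sigma_t a_{j_t}|\geq\varepsilon\,a_{j_1}>0$. Either way the relation cannot vanish, so no extra solution exists.

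The only genuinely new ingredient compared with Lemma \ref{lemma1} is the combinatorial bookkeeping in the first step: passing between index tuples with repetitions and blocks carrying aggregated integer coefficients $\sigma_t$, and verifying that the independent-model moment is exactly the block-balanced count, so that the difference of the two moments equals the number of extra solutions. I expect this to be the main (if modest) obstacle; once it is set up, the arithmetic is supplied verbatim by Lemma \ref{lemma_poly} and by the gap analysis of Lemma \ref{lemma1}, and --- because here all indices lie in the tail $\{\ell+1,\dots,n\}$ --- the argument is in fact strictly simpler than in Lemma \ref{lemma1} and needs no re-choice of constants.
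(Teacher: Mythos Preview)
Your proposal is correct and takes essentially the same approach as the paper: expand both moments, identify the ``trivial'' (block-balanced) solutions as precisely those counted by the independent model, and rule out any further solution by grouping equal indices into $b_1 a_{i_1}+\dots+b_h a_{i_h}$ with $(b_1,\dots,b_h)\neq 0$ and invoking Lemma~\ref{lemma_poly} via the mechanism of Lemma~\ref{lemma1}. The paper's own proof is terser but identical in substance, and it makes the same observation you do --- that since all indices here exceed $\ell$, \eqref{factor} applies throughout and the argument is actually simpler than in Lemma~\ref{lemma1}. (In fact this means your Case~1/Case~2 split is not strictly needed: one can divide by $a_{j_1}$ and apply Lemma~\ref{lemma_poly} directly regardless of gap sizes, but your version is of course still valid.)
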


\begin{proof}
Lemma \ref{lemma2} can be proved following a strategy very similar to the one which we used to prove Lemma \ref{lemma1}. Using orthogonality of the trigonometric system, it turns out quite quickly that the question whether $\mathbb{E} (S_{\ell+1,n}^u)$ equals $\mathbb{E} (\widetilde{S}_{\ell+1,n}^u)$ or not boils down to the question whether there exists a configuration of $\pm$ signs, together with a system of indices $k_1, \dots, k_u \in \{\ell+1, \dots, n\}^u$, such that
\begin{equation} \label{this_equation}
a_{k_1} \pm \dots \pm a_{k_u} = 0,
\end{equation}
but such that this sum is ``non-trivial'' in the sense that it is not true that all indices show up multiple times, each with the same overall number of ``$+$'' and ``$-$'' signs, and such that the cancellation of the whole expression arises from the cancellation of the partial sums/differences for each particular index. Note here that the ``trivial'' solutions of \eqref{this_equation},  which come from the cancellation of the partial sums/differences for each particular index, are the only ones that also contribute to $\mathbb{E} (\widetilde{S}_{\ell+1,n}^u)$, since by independence there are no further ``non-trivial'' contributions to this expected value. In other words, when combining equal indices and writing
$$
a_{k_1} \pm \dots \pm a_{k_u} = b_1 a_{i_1} + \dots + b_h a_{i_h}
$$
for some $h \leq u$ and distinct $i_1 < \dots < i_h$ and for suitable coefficients $b_1, \dots, b_h$ (all of which are clearly at most $u \leq m$ in absolute value), then by cosidering only ``non-trivial'' solutions we actually rule out the case when the arising set of coefficients is $(b_1, \dots, b_h)$ equals $(0, \dots, 0)$. Now we are in a situation to utilize Lemma \ref{lemma_poly}; note that since we are dealing with $S_{\ell+1,n}$, all indices $k_1, \dots, k_u$ are so large that \eqref{factor} applies. Lemma \ref{lemma_poly} now asserts that no (non-trivial) solutions of \eqref{this_equation} exist (similar as in the proof of Lemma \ref{lemma1}), which proves Lemma \ref{lemma2}.
\end{proof}

\section{Proof of Theorem \ref{th1}}  \label{sec_3}

Let $m \geq 1$ be fixed. Assume that the sequence $(a_k)_{k \geq 1}$ satisfies \eqref{eta_conv} for some transcendental $\eta >1$. Let $\ell$ be the number from the statement of Lemmas \ref{lemma1} and \ref{lemma2}. Let $S_n(\cdot) = \sum_{k=1}^n  \cos (2 \pi a_k \cdot)$, and assume throughout the proof that $n \geq \ell$. In what follows we shall write  $\kappa_m(n):= \kappa_m(S_n)$ for the $m$-th cumulant of $S_n$. Furthermore, we write $\kappa_m^{(1,\ell)}$ and $\kappa_m^{(\ell+1,n)}$ for the $m$-th cumulants of $S_{1,\ell}$ and $S_{\ell+1,n}$, respectively, where $S_{1,\ell}$ and $S_{\ell+1,n}$ are defined as in the statement of Lemma \ref{lemma1}. We claim that
\begin{equation} \label{kappa_claim}
\kappa_m(n)= \kappa_m^{(1,\ell)} + \kappa_m^{(\ell+1,n)}.
\end{equation}
That is, while $S_{1,\ell}$ and $S_{\ell+1,n}$ are not actually stochastically independent, the cumulant of their sum is the sum of cumulants, thus mimicking the behavior of cumulants of sums of  independent random variables. To show that this is indeed the case, consider the cumulants $\overline{\kappa}_m^{(1,\ell)}$ and $\overline{\kappa}_m^{(\ell+1,n)}$ of the random variables
$$
\overline{S}_{1,\ell} :=  \sum_{k=1}^\ell  \cos (2 \pi a_k U) \qquad \text{and} \qquad  \overline{S}_{\ell+1,n} := \sum_{k=\ell+1}^n  \cos (2 \pi a_k V),
$$
where $U, V\sim\mathrm{Unif}[0,1]$ are independent. By construction $\overline{S}_{1,\ell}$ and $\overline{S}_{\ell+1,n}$ are stochastically independent, so that the cumulant $\overline{\kappa}_m(n)$ of $\overline{S}_n := \overline{S}_{1,\ell} +  \overline{S}_{\ell+1,n}$ satisfies
\begin{equation} \label{cumu_addi}
\overline{\kappa}_m(n) = \overline{\kappa}_m^{(1,\ell)} + \overline{\kappa}_m^{(\ell+1,n)}.
\end{equation}
Note that $S_{1,\ell}$ has the same distribution as $\overline{S}_{1,\ell}$, and that $S_{\ell+1,n}$ has the same distribution as $\overline{S}_{\ell+1,n}$. Accordingly,
\begin{equation} \label{kappa_gleich}
\kappa_m^{(1,\ell)}  = \overline{\kappa}_m^{(1,\ell)} \qquad \text{and} \qquad  \kappa_m^{(\ell+1,n)}  =  \overline{\kappa}_m^{(\ell+1,n)}.
\end{equation}
We shall now use the fact that cumulants are fundamentally linked to Bell polynomials in probability theory, where Bell polynomials provide the explicit formulas for converting between a probability distribution's moments and its cumulants \cite[p. 21]{csp2006}. This relation means that for the calculation of the $m$-th cumulant, moments of order up to $m$ are necessary. Thus, the calculation of the cumulants of $\overline{S}_n$ essentially boils down to the calculation of moments of $\overline{S}_n$, so that for some $w \leq m$ one is interested in $\mathbb{E} \left( \overline{S}_n^w \right).$ Using the binomial theorem, one clearly has
\begin{eqnarray*}
\mathbb{E} \left(\overline{S}_n^w \right) & = & \mathbb{E} \left( \left(\overline{S}_{1,\ell} + \overline{S}_{\ell+1, n} \right)^w \right) \\
& = & \sum_{u=0}^w \binom{w}{u} \mathbb{E} \left( \overline{S}_{1,\ell}^u \overline{S}_{\ell+1,n}^{w-u} \right) \\
& = & \sum_{u=0}^w \binom{w}{u} \mathbb{E} \left( \overline{S}_{1,\ell}^u \right) \mathbb{E} \left( \overline{S}_{\ell+1,n}^{w-u} \right),
\end{eqnarray*}
where the last step follows from independence of $ \overline{S}_{1,\ell}$ and $\overline{S}_{\ell+1,n}$. However, for $S_n$ we similarly have
$$
\mathbb{E} \left(S_n^w \right) = \sum_{u=0}^w \binom{w}{u} \mathbb{E} \left( S_{1,\ell}^u S_{\ell+1,n}^{w-u} \right) = \sum_{u=0}^w \binom{w}{u} \mathbb{E} \left( S_{1,\ell}^u \right) \mathbb{E} \left( S_{\ell+1,n}^{v-u} \right),
$$
where the last equality is now not due to independence, but due to our Lemma \ref{lemma1} (which asserts that $S_{1,\ell}$ and $S_{\ell+1,n}$ are ``uncorrelated of higher order'', in the terminology of \cite{ll}; see also \cite{khatri}). Note also that clearly $ \mathbb{E} \left( S_{1,\ell}^u \right) =  \mathbb{E} \left( \overline{S}_{1,\ell}^u \right)$ and $\mathbb{E} \left( S_{\ell+1,n}^{w-u} \right) = \mathbb{E} \left( \overline{S}_{\ell+1,n}^{w-u} \right)$ for all $u$ and $w$. Thus, we have
$$
\mathbb{E} \left(\overline{S}_n^w \right)  = \mathbb{E} \left(S_n^w \right)
$$
for all $w \leq m$. Since $\kappa_m(n)$ and $\overline{\kappa}_m(n)$ are computed by the same combinatorial formula from the moments of $S_n$ and of $\overline{S}_n$, respectively, and since these moments all coincide by our calculation, we have
	\[
  		\kappa_m(n) = \overline{\kappa}_m(n)
 	\]
 This means that by \eqref{cumu_addi} and \eqref{kappa_gleich}, we have
$$
\kappa_m(n) = \overline{\kappa}_m(n) = \overline{\kappa}_m^{(1,\ell)} + \overline{\kappa}_m^{(\ell+1,n)} = \kappa_m^{(1,\ell)} + \kappa_m^{(\ell+1,n)},
$$
which establishes \eqref{kappa_claim}. Now Lemma \ref{lemma2} asserts that all moments up to order $m$ of $S_{\ell+1,n}$ coincide with those of the independent model $\widetilde{S}_{\ell+1,n}$ (as defined in the statement of Lemma \ref{lemma2}). Writing $\widetilde{\kappa}_m^{(\ell+1,n)}$ for the $m$-th cumulant of  $S_{\ell+1,n}$, and using again the fact that cumulants can be calculated in terms of moments, this yields
$$
\kappa_m^{(\ell+1,n)} = \widetilde{\kappa}_m^{(\ell+1,n)} = (n - \ell) \widetilde{\kappa}_m.
$$
Thus, overall we have
$$
\kappa_m(n) = \kappa_m^{(1,\ell)} + \kappa_m^{(\ell+1,n)} = \kappa_m^{(1,\ell)} + (n-\ell) \widetilde{\kappa}_m = n \widetilde{\kappa}_m + \mathcal{O}(1),
$$
as claimed.

\section{Proof of Theorem \ref{th2}}

Throughout this section we write $a_k = 2^k+1$. Let $S_n(\omega) = \sum_{k=1}^n \cos(2 \pi a_k \omega)$, $\omega\in[0,1]$. The calculation of cumulants boils down to the calculation of moments of $S_n$. Trivially, $\kappa_1(n) = \mathbb{E} (S_n) = 0$. Then, by orthogonality, we have
\begin{equation} \label{kappa_2}
\kappa_2(n) = \mathbb{E} (S_n^2) = \frac{n}{2}.
\end{equation}
Next, by orthogonality,
\begin{eqnarray*}
\kappa_3(n) & = & \mathbb{E}(S_n^3) \\
& = & \int_0^1 \sum_{1 \leq k_1,k_2,k_3 \leq n} \cos(2 \pi a_{k_1} \omega)\cos(2 \pi a_{k_2} \omega)\cos(2 \pi a_{k_3} \omega) \\
& = & \frac{1}{8}   \sum_{\pm}^{(3)}  \sum_{1 \leq k_1,k_2,k_3 \leq n} \mathbf{1} \left( \pm a_{k_1} \pm a_{k_2} \pm a_{k_3} = 0 \right),
\end{eqnarray*}
where as in the previous section we write ``$ \sum_{\pm}^{(3)}$'' for the sum over all $8$ possible combinations of $\pm$ signs. However, note that $a_k$ is an odd integer for all $k$, and thus  $\pm a_{k_1} \pm a_{k_2} \pm a_{k_3}$ also always is an odd number. Thus $\pm a_{k_1} \pm a_{k_2} \pm a_{k_3} = 0$ is impossible, and $\kappa_3(n) = 0$ for all $n$. By the same reasoning, all moments of odd order of $S_n$ are zero, and from the way how the cumulants arise out of the moments~\cite[Section~3.2]{peccati_taqqu_book_wiener_chaos}, this implies that $\kappa_5(n)$ and all other cumulants of odd order also vanish, for all $n \geq 1$.\\

For the cumulant of order four, we have $\kappa_4(n) = \mathbb{E} (S_n^4) - 3 \left( \mathbb{E} (S_n^2) \right)^2$. We have
\begin{eqnarray} \label{moment_4}
\mathbb{E} (S_n^4) & = & \frac{1}{16} \sum_{\pm}^{(4)}  \sum_{1 \leq k_1,k_2,k_3,k_4 \leq n} \mathbf{1} \left( \pm a_{k_1} \pm a_{k_2} \pm a_{k_3} \pm a_{k_4} = 0 \right)
\end{eqnarray}
Clearly, $\pm a_{k_1} \pm a_{k_2} \pm a_{k_3} \pm a_{k_4} = 0$ is impossible when all $\pm$ signs are ``$+$''. Similarly, there is no solution when all signs are ``$-$''.\\

Assume now that the first two signs are ``$+$'', and the last two signs are ``$-$''. Then we are looking for solutions of
$$
a_{k_1} + a_{k_2} - a_{k_3} - a_{k_4} = 0 ,
$$
which means
$$
2^{k_1} + 2^{k_2} = 2^{k_3} + 2^{k_4}.
$$
This is clearly always true when
$$
k_1 = k_3 \quad \text{and} \quad k_2 = k_4, \qquad \text{or when} \qquad k_1 = k_4 \quad \text{and} \quad k_2 = k_3,
$$
which happens for $2 n^2 - n$ many configurations of indices. It is easy to see that there are no other solutions, as a consequence of the fact that the binary representation of positive integers is unique (which directly implies the claim that there are no further solutions in the case when $k_1 \neq k_2$; in the case $k_1 = k_2$, it is easily seen that actually one must have $k_1 = k_2 = k_3 = k_4$, which gives a solution that we already took into consideration above).\\

Assume now that the first three signs are ``$+$'' signs, and the last one is a ``$-$'' sign. Thus, we consider the equation
$$
a_{k_1} + a_{k_2} + a_{k_3} - a_{k_4} = 0,
$$
which means
$$
2^{k_1} + 2^{k_2} + 2^{k_3} + 2 = 2^{k_4}
$$
The right-hand side has only one non-zero binary digit. It is easy to see that in order for the left-hand side to have also only one non-zero binary digit, it is necessary for all of $k_1,k_2,k_3,k_4$ to be small. The only solutions $(k_1,k_2,k_3,k_4)$ can easily be found ``by hand'' and turn out to be $(1,1,1,3)$ and $(1,2,3,4)$, where the second solution has to be counted $3!= 6$ times since there are so many possibilities to permute $k_1,k_2,k_3$.\\

\noindent Accordingly, if we assume that $n \geq 4$, then as contributions to $\mathbb{E} (S_n^4)$ we have:
\begin{itemize}
 \item The contribution to \eqref{moment_4} which comes from two ``$+$'' and two ``$-$'' signs. For a fixed configuration of such signs, there are $2n^2-n$ many solutions $(k_1,k_2,k_3,k_4)$. Furthermore, there are $\binom{4}{2}$ possible ways to place two ``$+$'' and two ``$-$'' signs. Thus, the overall contribution of this case is
 $$
 \frac{1}{16} \binom{4}{2} \left(2n^2 -n \right) = \tfrac{3}{4} n^2 - \tfrac{3}{8} n.
 $$
 \item The contribution of the ``sporadic'' solutions $(1,1,1,3)$ and $(1,2,3,4)$, where, as noted, the second solution has to be counted 6 times because of possible permutations. These solutions shows up when there are three ``$+$'' and one ``$-$'' sign, or vice versa. Overall there are $\binom{4}{1} + \binom{4}{3} = 8$ possible ways to place such signs, so the overall contribution arising from these solution is
 $$
\frac{8}{16}   \left(1+6 \right) = \frac{7}{2}.
 $$
\end{itemize}
Combining all this together, we arrive at
\begin{equation} \label{moment_4_size}
\mathbb{E}(S_n^4) =  \tfrac{3}{4} n^2 - \tfrac{3}{8} n + \tfrac{7}{2} \qquad \text{for all $n \geq 4$}.
\end{equation}
Since $\mathbb{E}(S_n^2) = \frac{n}{2}$, this yields
\begin{equation} \label{kappa_4}
\kappa_4(n) = \mathbb{E}(S_n^4) - 3 \left( \mathbb{E} (S_n^2) \right)^2 = -\tfrac{3}{8}n  + \tfrac{7}{2}, \qquad\text{for all $n \geq 4$}.
\end{equation}
Note that this cumulant grows linearly in $n$, and has the same factor $-\tfrac{3}{8}$ which also appears in the independent model; compare \eqref{cumulants_indep}. There is only a slight deviation from $n \widetilde{\kappa}_4$ in the form of the term $\tfrac{7}{2}$ in \eqref{kappa_4}, which arises from some sporadic solutions associated to small indices; accordingly, this term is of a very similar nature to the ``$\mathcal{O}(1)$'' term in the conclusion of Theorem \ref{th1}.\\

Now we come to the calculation of $\kappa_6(n)$, which is given by
\begin{equation} \label{kappa_6}
\kappa_6(n) = \mathbb{E}(S_n^6) - 15 \mathbb{E}(S_n^4) \mathbb{E}(S_n^2) - \underbrace{10 \left(\mathbb{E}(S_n^3)\right)^2}_{=0} + 30 \left( \mathbb{E}(S_n^2) \right)^3.
\end{equation}
As before, orthogonality yields
\begin{eqnarray} \label{moment_6}
\mathbb{E} (S_n^6) & = & \frac{1}{64} \sum_{\pm}^{(6)}  \sum_{1 \leq k_1,k_2,k_3,k_4,k_5,k_6 \leq n} \mathbf{1} \left( \pm a_{k_1} \pm a_{k_2} \pm a_{k_3} \pm a_{k_4} \pm a_{k_5} \pm a_{k_6} = 0 \right).
\end{eqnarray}

To begin with, let us assume that there are three ``$+$'' signs, follows by three ``$-$'' signs. That is, we count solutions of the equation
\begin{equation} \label{six_equ}
a_{k_1} + a_{k_2} + a_{k_3} - a_{k_4} - a_{k_5} - a_{k_6} = 0.
\end{equation}
Clearly there are solutions of the form $k_1 = k_4$, $k_2=k_5$, $k_3=k_6$, and permutations of this. Overall, in the range $k_1,k_2,k_3,k_4,k_5,k_6 \leq n$ there are $6n(n-1)(n-2)$ many such solutions for which $k_1 \neq k_2 \neq k_3$, plus $9 n (n-1)$ many solutions for which two indices among $k_1,k_2,k_3$ are the same but the other is different, plus $n$ many solutions of the form $k_1=k_2=k_3$. Thus, overall we have $6n^3 -9n^2 +4n$ solutions of \eqref{six_equ} for which $k_1 = k_4$, $k_2=k_5$, $k_3=k_6$ or a permuted version of this holds (such that all indices at ``$+$'' signs can be paired with indices at ``$-$'' signs).\\

Assume that two indices at ``$+$'' signs can be paired with two indices at ``$-$'' signs, such as $k_1 = k_4$ and $k_2 = k_5$. Then clearly there can be no solution of \eqref{six_equ} for which $k_3 \neq k_6$. Thus we have already accounted for all such solutions, since for any such solution actually all three indices can be paired. Now assume that exactly one index at a ``$+$'' sign, say $k_1$, can be paired with one index at a ``$-$'' sign, say $k_4$. Such a solutions of \eqref{six_equ} would then require that
$$
a_{k_2} + a_{k_3} - a_{k_5} - a_{k_6} = 0,
$$
where $\{k_2,k_3\} \cap \{k_5,k_6\} = \varnothing$. As explained during the calculation of $\mathbb{E}(S_n^4)$, no such solution exists, essentially due to the uniqueness of the binary representation of integers.\\

We now come to the crucial point in the proof of Theorem \ref{th2}. We will show that the particular structure of our sequence $(a_k)_{k \geq 1} = (2^k+1)_{k \leq 1}$ leads to the existence of quadratically many additional contributions to $\mathbb{E}(S_n^6)$, which result in $\mathbb{E}(S_n^6)$ (and consequently also $\kappa_6(n)$) blowing up in comparison with the independent model. We are now interested in those solutions of \eqref{six_equ} for which
\begin{equation} \label{disjoint_assumption}
\{k_1,k_2,k_3\} \cap \{k_4,k_5,k_6\} = \varnothing.
\end{equation}
Heuristically, what will happen is that by the structure of our particular sequence we have
$$
2 a_k = a_{k+1} + 1 \qquad \text{for all $k$},
$$
and thus $a_k + a_k - a_{k+1}$ always equals $1$. This can be combined with $-a_\ell - a_\ell + a_{\ell+1} = -1$, such that $a_k + a_k - a_{k+1} -a_\ell - a_\ell + a_{\ell+1} = 0$, with quadratically many possible combinations of $k$ and $\ell$. These are the ``additional'' solutions which inflate the size of $\mathbb{E}(S_n^6)$. To do this in a precise way, let us count all solutions $(k_1,k_2,k_3,k_4,k_5,k_6)$ of \eqref{six_equ} such that \eqref{disjoint_assumption} holds. Let us assume that $k_1$ (which has a ``$+$'' sign) is the maximal element among $\{k_1, \dots, k_6\}$, and that $k_1 \geq k_2 \geq k_3$ and $k_4 \geq k_5 \geq k_6$. We are trying to find solutions of
\begin{equation} \label{six_eq_2}
2^{k_1} + 2^{k_2} + 2^{k_3} = 2^{k_4} + 2^{k_5} + 2^{k_6}.
\end{equation}
It is easy to see that when $k_2 = k_1$, then (since $k_1$ was assumed to be maximal, and \eqref{disjoint_assumption} is assumed to hold) we have $2^{k_4} + 2^{k_5} + 2^{k_6} \leq 3 \cdot 2^{k_1-1}$, while the left-hand side is at least $2^{k_1} + 2^{k_2} \geq 2^{k_1 + 1}$, and \eqref{six_eq_2} becomes impossible. Thus actually we must have $k_2 < k_1$, which means that the number on the left-hand side has a binary digit ``1'' at location $k_1$. By \eqref{disjoint_assumption}, the only way for the number on the right-hand side of \eqref{six_eq_2} to have a binary digit ``1'' at location $k_1$ as well, is to choose $k_4 = k_5 = k_1 -1$. Now the remaining variables $k_2,k_3,k_6$ need to satisfy
$$
2^{k_2} + 2^{k_3} = 2^{k_6},
$$
for which the only possibility is that $k_2 + 1 = k_3 + 1 = k_6$. In other words, we have proved that all solutions $(k_1, \dots, k_6)$ respecting \eqref{disjoint_assumption} are of the form $(k+1,\ell,\ell, k,k, \ell+1)$ or some permutation of this, for some $k,\ell$ satisfying $k \neq \ell$ and $1 \leq k,\ell \leq n-1$. Accordingly, the overall number of solutions of \eqref{six_equ}, subject to \eqref{disjoint_assumption} and $k_1, \dots, k_6 \leq n$, is:
\begin{itemize}
 \item  If $|k - \ell| \geq 2$, then $k+1 \neq \ell$ and $k \neq \ell+1$. There are $\binom{3}{2} \binom{3}{1}$ many ways to choose two locations at a ``$+$'' sign  together with one location at a ``$-$'' sign, and $(n-2)(n-3)$ many such pairs of $k$ and $\ell$ with $k,\ell \leq n-1$. This gives a total contribution of $9 (n-2)(n-3) = 9n^2 -45 n + 54$.
\item If $|k-\ell|=1$, then the indices at all ``$+$'' signs, or those at all ``$-$'' signs are interchangeable, so the combinatorial factor is only $3$, and the number of such pairs of $k$ and $\ell$ is $2(n-2)$, giving a total contribution of $6 (n-2)$.
\end{itemize}
Thus overall the solutions satisfying \eqref{disjoint_assumption} give a contribution of $9n^2 -45 n + 54 + 6 (n-2) = 9n^2 -39 n + 42$.\\

Now we come to the case where there are four ``$+$'' and two ``$-$'' signs, and study the equation
\begin{equation} \label{six_equ_4}
a_{k_1} + a_{k_2} + a_{k_3} + a_{k_4} - a_{k_5} - a_{k_6}  = 0,
\end{equation}
which is
\begin{equation} \label{six_four_two}
2^{k_1} + 2^{k_2} + 2^{k_3} + 2^{k_4} + 2 = 2^{k_5} + 2^{k_6}.
\end{equation}
Assume for simplicity of writing in the sequel that $k_1 \geq k_2 \geq k_3 \geq k_4$, and that $k_5 \geq k_6$ (we will consider possible permutations later). If $k_5 = k_6$, then the number on the right-hand side has only one binary digit, and it is easy to see that for the number on the left-hand side of \eqref{six_four_two} to have only one non-zero binary digit, all indices $k_1, \dots, k_4$ must be ``small''. Thus the only solutions in the case $k_5 = k_6$ can be found ``by hand'', and one can check easily that the only such solutions are $(3,1,1,1,3,3)$, $(2,2,2,1,3,3)$ and $(4,3,2,1,4,4)$. \\

Now assume that $k_5 > k_6$. We distinguish several cases:
\begin{itemize}
\item Assume that $k_6 = 1$. Then the equation becomes
$$
2^{k_1} + 2^{k_2} + 2^{k_3} + 2^{k_4} = 2^{k_5}.
$$
It is easy to see that this allows two ``parametric'' solutions: either the solution $(k,k,k,k,k+2,1)$ for $1 \leq k \leq n-2$, or $(k+2,k+1,k,k,k+3,1)$ for $1 \leq k \leq n-3$.
\item Assume that $k_6 = 2$. Then the equation becomes
$$
2^{k_1} + 2^{k_2} + 2^{k_3} + 2^{k_4} = 2^{k_5} + 2.
$$
This is only possible if $k_4=1$, so we get $2^{k_1} + 2^{k_2} + 2^{k_3} = 2^{k_5}$, which gives the parametric solution $(k+1,k,k,1,k+2,2)$ for $1 \leq k \leq n-2$.
\item Assume that $k_6=3$. The equation becomes
$$
2^{k_1} + 2^{k_2} + 2^{k_3} + 2^{k_4} = 2^{k_5} + 6.
$$
This is only possible if $k_4=1$, which gives $2^{k_1} + 2^{k_2} + 2^{k_3} = 2^{k_5} + 4$. There are two ways how this can be true: either $k_2 = k_3 = 1$ and $k_1 = k_5$, or $k_3=2$ and $k_1 + 1 = k_2 + 1 = k_5$. Thus we get two parametric solutions, namely $(k,1,1,1,k,3)$ for $4 \leq k \leq n$, and $(k,k,2,1,k+1,3)$ for $3 \leq k \leq n-1$.
\item Assume that $k_6 = 4$. Then the equation becomes
$$
2^{k_1} + 2^{k_2} + 2^{k_3} + 2^{k_4} = 2^{k_5} + 14.
$$
where $14 = 8 + 4 + 2$ and $k_5  > k_6 = 4$ by assumption. Thus the right-hand side has four non-zero binary digits, and equals the left-hand side for each 6-tuple of the form $(k,3,2,1,k,4)$, where $5 \leq k \leq n$.
\item Assume that $k_6 \geq 5$. In this case, writing \eqref{six_four_two} in the form
$$
2^{k_1} + 2^{k_2} + 2^{k_3} + 2^{k_4} = 2^{k_5} + 2^{k_6} - 2,
$$
the right-hand side has exactly $k_6$ many non-zero binary digits. Since $k_6 \geq 5$, the left-hand side clearly cannot have this many non-zero binary digits, so there are no solutions.
\end{itemize}
~\\

 Finally, we have the case of five ``$+$'' signs and one ``$-$'' sign. Thus we study
 $$
 a_{k_1} + a_{k_2} + a_{k_3} + a_{k_4} + a_{k_5} - a_{k_6} = 0,
 $$
 which is
 $$
 2^{k_1} + 2^{k_2} + 2^{k_3} + 2^{k_4} + 2^{k_5} + 4 = 2^{k_6}.
 $$
 The right-hand side has only one non-zero binary digit. It is easy to see that to ensure that the left-hand also has just one non-zero binary digit, all indices $k_1,\dots, k_5$ need to be ``small''. It is not difficult to find all possible sporadic solutions, namely:
 \begin{eqnarray}
\label{six_sporadic_a} & & (6,5,4,3,2,7),  (5,4,3,1,1,6), (5,4,2,2,2,6), (5,3,3,3,2,6), (4,4,4,3,2,6), \\
\label{six_sporadic_b} & &  (4,2,2,1,1,5),  (3,3,3,1,1,5), (3,3,2,2,2,5), (2,1,1,1,1,4),
 \end{eqnarray}
up to permutations of $k_1,\dots, k_5$, for $n \geq 7$.\\

It remains to factor in the number of ways how a particular solution can arise.
 \begin{itemize}
  \item For the solutions of \eqref{six_equ}, we have $\binom{6}{3}= 20$ possibilities to choose the location of the signs. This has to be multiplied with the term $6n^3 - 9n^2 + 4n + 9n^2 - 39n + 42$ which we got assuming that the signs are located as $(+,+,+,-,-,-)$, giving a total of $120 n^3 - 700 n + 840$.
  \item Then we have the contribution of solutions with four ``$+$'' signs and two ``$-$'' signs, as in \eqref{six_equ_4}, together with the contribution of two ``$-$'' and four ``$+$'' signs. There is an extra factor $\binom{6}{4} + \binom{6}{2} = 30$ from the number of possible ways to choose the location of the signs. Then we have to consider the possible ways of permuting $k_1,k_2,k_3,k_4$ and of permuting $k_5,k_6$ within a solution. Assuming that $n \geq 5$ we have the following:
  \begin{itemize}
  \item The solution $(3,1,1,1,3,3)$ allows $4$ permutations.
  \item The solution $(2,2,2,1,3,3)$ allows $4$ permutations.
  \item The solution $(4,3,2,1,4,4)$ allows $24$ permutations.
  \item The solution $(k,k,k,k,k+2,1)$ allows $2$ permutations, for $1 \leq k \leq n-2$, giving a total contribution of $2 (n-2)$.
  \item The solution $(k+2,k+1,k,k,k+3,1)$ allows $24$ permutations, for $1 \leq k \leq n-3$, giving a total contribution of $24 (n-3)$.
  \item The solution $(k+1,k,k,1,k+2,2)$ allows $8$ permutations if $k=1$, and $24$ permutations, for $2 \leq k \leq n-2$, giving a total contribution of $24 n - 64$.
  \item The solution $(k,1,1,1,k,3)$ allows $8$ permutations, for $4 \leq k \leq n$, giving a total contribution of $8 (n-3)$.
  \item The solutuion $(k,k,2,1,k+1,3)$ allows $24$ permutations, for $3 \leq k \leq n-1$, giving a total contribution of $24 (n-3)$.
  \item The solution $(k,3,2,1,k,4)$ allows $48$ permutations, for $5 \leq k \leq n$, giving a total contribution of $48 (n-4)$.
  \end{itemize}
  Accordingly, the overall contribution from the case of four ``$+$'' signs and two ``$-$'' signs, or vice versa, is $3900 n -11880$, for $n \geq 5$.
  \item Finally, there is the contribution from solutions with five ``$+$'' signs and one ``$-$'' sign, or vice versa. There is a factor $\binom{6}{5} + \binom{6}{1} = 12$ for the number of ways to assign the location of the signs. For each solution listed in \eqref{six_sporadic_a} and \eqref{six_sporadic_b} we need to calculate the number of possible ways to permute $k_1,k_2,k_3,k_4,k_5$. The number of possible permutations for each solution are: $120, 60, 20, 20, 20, 30, 10, 10, 5$. Thus, the overall contribution of these sporadic solutions is 
  \[
  12 \left(120 + 60 + 20 + 20 + 20 + 30 + 10 + 10 + 5 \right) = 3540
  \] 
 for $n \geq 7$.
 \end{itemize}

 As a consequence, by \eqref{moment_6}, we have
 \begin{eqnarray*}
 \mathbb{E} (S_n^6) & = & \frac{1}{64} \left( 120 n^3 - 700 n + 840 +3900 n -11880 + 3540 \right) \\
& = & \frac{30 n^3 + 800 n - 1875}{16}
 \end{eqnarray*}
 for all $n \geq 7$. \\   

Thus, by \eqref{kappa_6} we have
\begin{eqnarray*}
\kappa_6(n) & = &  \frac{30 n^3 + 800 n - 1875}{16} - 15 \left(\frac{3n^2}{4}  - \frac{3n}{8} + \frac{7}{2} \right) \frac{n}{2} + \frac{30n^3}{8} \\
& = & \frac{45n^2 + 380n -1875}{16}
\end{eqnarray*}
for all $n \geq 7$.

\section{A combinatorial formula for cumulants} \label{sec:combinatorial_formula_cumulants}

We fix $(a_k)_{k \geq 1}$, a sequence of natural numbers, and recall that $S_n(\omega) := \sum_{k=1}^n  \cos (2 \pi a_k \omega)$ for $\omega\in[0,1]$.
The aim of this section is to deduce a combinatorial formula for the cumulants of the random variable $S_n$. For $n\in \mathbb N$ and $m\in \mathbb N$,  we define the set
\begin{equation}\label{eq:tuples_nums_signs}
	\mathcal{T}_m(n) :=
	\Big\{
	T =(i_1,\dots,i_m; \varepsilon_1,\dots,\varepsilon_m) \,:\, i_r\in\{1,\dots,n\},\ \varepsilon_r\in\{\pm1\} \text{ for all } r = 1,\ldots,m \Big\}.
\end{equation}
The elements of $\mathcal{T}_m(n)$ will be called \emph{tuples}. We say that $T\in \mathcal{T}_m(n)$ is a \emph{zero-sum tuple} with respect to the sequence $(a_k)_{k \geq 1}$ if and only if $\sum_{r=1}^m \varepsilon_r \, a_{i_r} = 0$.

\subsection{Moments of $S_n$ of order $m$}

We start with the following lemma, expressing the moments of the random variable $S_n$ in terms of the number of zero-sum tuples. Its proof is folklore, and similar to our calculations in Sections \ref{sec_2} and \ref{sec_3} above, but we include it for the sake of completeness.

\begin{lemma}\label{lem:moments}
Let $(a_k)_{k \geq 1}$ be a sequence of natural  numbers. Then, for all $n\in \N,m\in\mathbb N$,
\begin{equation}\label{eq:moment}
\mathbb E[S_n^m] = \frac{1}{2^{m}}\sum_{T= (i_1,\dots,i_m; \varepsilon_1,\dots,\varepsilon_m) \in\mathcal{T}_m(n)}
\mathbf 1_{\sum_{r=1}^m \varepsilon_r \, a_{i_r} = 0}.
\end{equation}
\end{lemma}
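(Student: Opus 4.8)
The plan is to carry out a direct expansion of $S_n^m$ and then invoke orthogonality of the trigonometric system on $[0,1]$, exactly in the spirit of the computations in Sections~\ref{sec_2} and~\ref{sec_3}. First I would multiply out the $m$-fold power and relabel the summation indices to obtain
$$
S_n^m = \left(\sum_{k=1}^n \cos(2\pi a_k\omega)\right)^{\!m} = \sum_{i_1,\dots,i_m=1}^n \ \prod_{r=1}^m \cos(2\pi a_{i_r}\omega).
$$

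Next I would linearise each cosine factor via Euler's formula $\cos x = \tfrac12(e^{ix}+e^{-ix})$, which gives the product-to-sum identity
$$
\prod_{r=1}^m \cos(2\pi a_{i_r}\omega) = \frac{1}{2^m}\sum_{(\varepsilon_1,\dots,\varepsilon_m)\in\{\pm1\}^m} e^{2\pi i\omega\sum_{r=1}^m \varepsilon_r a_{i_r}}.
$$
Combining the two displays, interchanging the (finite) sums with the integral over $\omega\in[0,1]$ — legitimate since everything is a finite sum, so no convergence issue arises — leads to
$$
\mathbb E[S_n^m] = \frac{1}{2^m}\sum_{T=(i_1,\dots,i_m;\varepsilon_1,\dots,\varepsilon_m)\in\mathcal T_m(n)} \int_0^1 e^{2\pi i\omega\sum_{r=1}^m \varepsilon_r a_{i_r}}\,d\omega.
$$

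Finally I would evaluate the inner integral. Because each $a_k$ is a natural number, the frequency $t := \sum_{r=1}^m \varepsilon_r a_{i_r}$ is an integer, and $\int_0^1 e^{2\pi i\omega t}\,d\omega$ equals $1$ if $t=0$ and $0$ otherwise; that is, it equals $\mathbf 1_{\sum_{r=1}^m \varepsilon_r a_{i_r}=0}$. Substituting this back yields exactly~\eqref{eq:moment}. (As a sanity check, the right-hand side is manifestly real and nonnegative, consistent with the fact that $S_n$ is a bounded real random variable; the set of zero-sum tuples is moreover invariant under flipping all signs $\varepsilon_r\mapsto-\varepsilon_r$.)

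There is essentially no obstacle here — the statement is folklore, as noted — but the one point deserving care is that the orthogonality relation $\int_0^1 e^{2\pi i\omega t}\,d\omega = \mathbf 1_{t=0}$ is being applied with $t$ ranging only over integers, which is precisely the role played by the hypothesis $a_k\in\mathbb N$: were the $a_k$ arbitrary real numbers, the integral would only be small, not exactly zero, for $t\neq 0$, and the clean combinatorial identity would fail.
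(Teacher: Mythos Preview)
Your proof is correct and follows essentially the same route as the paper's own proof: expand $S_n^m$, linearise each cosine via Euler's formula, interchange the finite sums with the integral, and use orthogonality of the exponentials to reduce each integral to the indicator $\mathbf 1_{\sum_r \varepsilon_r a_{i_r}=0}$. Your remark that the hypothesis $a_k\in\mathbb N$ is precisely what makes the frequency an integer (so that orthogonality yields an exact $0$ rather than merely a small value) is a nice addition not made explicit in the paper.
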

\begin{proof}
Expressing  the cosine function in terms of an exponential function, i.e., writing $\cos x=2^{-1}(e^{ix}+e^{-ix})$, we obtain
	\begin{align*}
		\mathbb E [S_n^m] & = \int_{0}^1 \Bigg( \sum_{k=1}^n \cos(2\pi a_k \omega)\Bigg)^m\,d\omega \cr
		& = \frac{1}{2^m}\sum_{k_1=1}^n \ldots \sum_{k_m=1}^n \int_0^1\prod_{r=1}^m \Bigg( e^{2\pi i a_{k_r}\omega} + e^{-2\pi i  a_{k_r}\omega}\Bigg) d\omega \cr
		& = \frac{1}{2^m}\sum_{k_1=1}^n \ldots \sum_{k_m=1}^n \sum_{(\varepsilon_1,\dots,\varepsilon_m)\in\{\pm 1\}^m} \int_0^1  e^{2\pi i \omega \sum_{r=1}^m \varepsilon_ra_{k_r}} d\omega \cr
		& = \frac{1}{2^m}\sum_{k_1=1}^n \ldots \sum_{k_m=1}^n \sum_{(\varepsilon_1,\dots,\varepsilon_m)\in\{\pm 1\}^m} \mathbf 1_{\sum_{r=1}^m \varepsilon_r \, a_{i_r} = 0},
	\end{align*}
which proves the desired identity.	
\end{proof}

\subsection{Cumulants of $S_n$ and multiplicity of tuples}

To establish a combinatorial formula for the cumulants of $S_n$, we need some notation and background on  set partitions. We start by recalling some general terminology and then get back to our specific setting. For more information on set partitions we refer to~\cite[Examples~3.1.1(d), 3.10.4]{stanley_book_enum_comb_1} and~\cite[Section~2.2]{peccati_taqqu_book_wiener_chaos}.

A \emph{partition} of the set $[m]:=\{1,\dots,m\}$ is a collection $\pi = \{A_1,\ldots, A_k\}$ of non-empty, disjoint subsets of $[m]$ such that $A_1\cup \ldots\cup A_k = [m]$. The subsets $A_1,\ldots, A_k$ are called the \emph{blocks} of the partition $\pi$.  The number of blocks in a partition $\pi$ shall be denoted by $|\pi|$. Two partitions differing only by the order of blocks are considered equal.

Given two partitions $\pi, \sigma$ of $[m]$, we say that $\pi$ is \emph{finer} than $\sigma$ if and only if for every block $A\in \pi$ there exists a block  $B\in \sigma$ such that $A \subseteq B$. In this case, we write $\pi \leq \sigma$.  For $m\in \mathbb N$ let $\Pi_m$ be the set of partitions of $[m]$;  note that $\leq$ is a partial order on $\Pi_m$. The maximal element $\hat 1$ is the partition with a single block $[m]$, while the minimal element $\hat 0$ is the partition whose blocks are singletons $\{1\},\ldots, \{m\}$.

Since each set of partitions of $[m]$ has a least upper bound, called their \emph{join}, and a greatest lower bound, called their \emph{meet}, the set $\Pi_m$ forms a lattice. Recall that the meet of two partitions  $\pi, \sigma$, denoted by $\pi \land \sigma$ is the partition whose blocks are the intersections of a block of $\pi$ and a block of $\sigma$, except for the empty set. To define the join $\pi\lor \sigma$ of two partitions $\pi,\sigma\in\Pi_M$, we first form a relation on the blocks $P$ of $\pi$ and $S$ of $\sigma$ by $P\sim S$ if and only if $P$ and $S$ are not disjoint. Then $\pi\lor \sigma$ is the partition in which each block $B$ is the union of a family of blocks connected by this relation.

The Möbius function on the poset $\Pi_m$, denoted by $\mu(\pi, \sigma)$, is known explicitly, see~\cite[Example~3.10.4]{stanley_book_enum_comb_1} or~\cite[Section~2.5]{peccati_taqqu_book_wiener_chaos}. Let us mention  the formula
\[
\mu(\pi, \hat 1) = (-1)^{|\pi|-1}(|\pi|-1)!, \qquad \pi \in \Pi_m.
\]

Recall that $(a_k)_{k\in\mathbb N}$ is a sequence of natural  numbers, fixed once and for all. Consider now a tuple $T =(i_1,\dots,i_m; \varepsilon_1,\dots,\varepsilon_m)\in \mathcal{T}_m(n)$ of indices and signs; see~\eqref{eq:tuples_nums_signs}. For a set $B\subseteq [m]$ define the signed partial sum
\[
\Sigma(B;T):=\sum_{r\in B}\varepsilon_r\,a_{i_r}\in\mathbb{Z};
\]
recall that $T$ is a zero-sum tuple if $\Sigma([m]; T) = 0$. We say that a set $B\subseteq [m]$ is a \emph{$T$-zero-sum set} if and only if $\Sigma(B; T) = 0$. Moreover, we say that a partition $\pi\in \Pi_m$ is a \emph{$T$-zero-sum partition} (with respect to the tuple $T$) if and only if every block of $\pi$ is a $T$-zero-sum set. The family of $T$-zero-sum partitions of $[m]$ will be denoted by
\[
\mathcal U_T  := \Big\{\pi\in\Pi_m \,:\, \Sigma(B; T)= 0 \text{ for every block } B\in \pi \Big\}.
\]

It is clear that the family $\mathcal U_T$  is an upset in $\Pi_m$, which means that whenever $\sigma\in \mathcal U_T$ and $\pi\in\Pi_m$ satisfies $\pi\geq \sigma$, then necessarily $\pi \in \mathcal U_T$. Indeed, if $\pi\geq \sigma$, then every block of $\pi$ is a disjoint union of some blocks of $\sigma$, and since merging disjoint $T$-zero-sum blocks yields a $T$-zero-sum block, we conclude that $\pi \in \mathcal U_T$.

Observe that $\mathcal U_T\neq \varnothing$ if and only if $T$ is a zero-sum tuple. If $\mathcal U_T\neq \varnothing$, then necessarily  $\hat 1 \in \mathcal U_T$, by the upset property. Note also that $\hat 0 \notin \mathcal U_T$ since $a_i\neq 0$ for all $i\in \mathbb N$.

Let $\min(\mathcal U_T)$ be the set of minimal elements of $\mathcal U_T$, with respect to $\leq$. A partition belongs to $\min (\mathcal U_T)$ if and only if it is a $T$-zero-sum partition, but no its proper refinement is a $T$-zero-sum partition.   Now, the \emph{multiplicity} of a tuple $T\in \mathcal{T}_m(n)$ is defined by
$$
{\rm{mult}}(T) := \sum_{\pi \in \mathcal U_T} \mu (\pi, \hat 1).
$$
\begin{example}
If $T$ is not a zero-sum tuple, then $\mathcal U_T = \varnothing$ and ${\rm{mult}}(T) = 0$.
\end{example}

\begin{example}
If $T$ is a zero-sum tuple and $T$ is connected, that is $\Sigma(B;T)\neq 0$ for every
nonempty proper subset $B\subsetneq [m]$, then $\mathcal U_T = \{\hat 1\}$ and ${\rm{mult}}(T) = 1$.
\end{example}
\begin{lemma}\label{lem:multiplicity_combinatorial_formula}
The multiplicity of $T$ is  the alternating count of nonempty subfamilies of $\min(\mathcal U_T)$ whose join is $\hat 1$, more precisely
\begin{equation}\label{eq:lem:multiplicity_combinatorial_formula}
{\rm{mult}}(T) = \sum_{\substack{\varnothing\neq J\subseteq \min(\mathcal U_T)\\ \bigvee J=\hat 1}}(-1)^{|J|+1}.
\end{equation}
\end{lemma}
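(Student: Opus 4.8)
The plan is to derive \eqref{eq:lem:multiplicity_combinatorial_formula} from two classical ingredients: the defining summation property of the Möbius function of $\Pi_m$, and ordinary inclusion--exclusion. If $T$ is not a zero-sum tuple, then $\mathcal U_T=\varnothing$, hence $\min(\mathcal U_T)=\varnothing$ and both sides of \eqref{eq:lem:multiplicity_combinatorial_formula} are empty sums equal to $0$; so I may assume $T$ is a zero-sum tuple, whence $\hat 1\in\mathcal U_T\neq\varnothing$. For $\sigma\in\Pi_m$ write $U_\sigma:=\{\pi\in\Pi_m:\pi\geq\sigma\}$ for the principal upset generated by $\sigma$. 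Since $\Pi_m$ is finite and $\mathcal U_T$ is an upset, every $\pi\in\mathcal U_T$ lies above at least one minimal element of $\mathcal U_T$ (starting from $\pi$ and passing to strictly smaller elements inside $\mathcal U_T$ as long as possible terminates at a minimal element), so
$$
\mathcal U_T=\bigcup_{\sigma\in\min(\mathcal U_T)}U_\sigma .
$$

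Next I would invoke inclusion--exclusion for summing a function over a finite union of sets: for the family $(U_\sigma)_{\sigma\in\min(\mathcal U_T)}$ and any function $g\colon\Pi_m\to\mathbb R$,
$$
\sum_{\pi\in\bigcup_\sigma U_\sigma}g(\pi)=\sum_{\varnothing\neq J\subseteq\min(\mathcal U_T)}(-1)^{|J|+1}\sum_{\pi\in\bigcap_{\sigma\in J}U_\sigma}g(\pi),
$$
which follows pointwise from the identity $\sum_{\varnothing\neq J\subseteq S}(-1)^{|J|+1}=1$, valid for every nonempty finite set $S$ (apply it with $S=\{\sigma\in\min(\mathcal U_T):\pi\in U_\sigma\}$). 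The key simplification is that $\pi\geq\sigma$ for all $\sigma\in J$ is equivalent to $\pi\geq\bigvee J$, so that $\bigcap_{\sigma\in J}U_\sigma=U_{\bigvee J}$; note that $\bigvee J\in\mathcal U_T$ by the upset property, although this will not be needed.

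Finally I would take $g(\pi):=\mu(\pi,\hat 1)$ and use the Möbius summation identity on the interval $[\sigma,\hat 1]$ of $\Pi_m$, namely $\sum_{\pi\in U_\sigma}\mu(\pi,\hat 1)=\sum_{\sigma\leq\pi\leq\hat 1}\mu(\pi,\hat 1)=\mathbf 1_{\{\sigma=\hat 1\}}$, applied to each inner sum to get $\sum_{\pi\in U_{\bigvee J}}\mu(\pi,\hat 1)=\mathbf 1_{\{\bigvee J=\hat 1\}}$. Combining these yields
$$
{\rm{mult}}(T)=\sum_{\pi\in\mathcal U_T}\mu(\pi,\hat 1)=\sum_{\varnothing\neq J\subseteq\min(\mathcal U_T)}(-1)^{|J|+1}\,\mathbf 1_{\{\bigvee J=\hat 1\}}=\sum_{\substack{\varnothing\neq J\subseteq\min(\mathcal U_T)\\ \bigvee J=\hat 1}}(-1)^{|J|+1},
$$
as claimed. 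There is no serious obstacle: the argument is a routine marriage of the Möbius identity with inclusion--exclusion, and the only points deserving explicit attention are that $\mathcal U_T$ is the union of the principal upsets generated by its minimal elements (finiteness of $\Pi_m$) and that intersecting principal upsets corresponds to taking the lattice join. As a sanity check, when $T$ is connected one has $\min(\mathcal U_T)=\{\hat 1\}$ and the right-hand side reduces to $(-1)^{1+1}=1$, consistent with the earlier example.
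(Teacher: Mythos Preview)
Your proof is correct and follows essentially the same approach as the paper: both cover $\mathcal U_T$ by the principal upsets generated by its minimal elements, apply inclusion--exclusion (you phrase it for sums, the paper for indicator functions), identify the intersection of principal upsets with the upset of the join, and then invoke the Möbius summation identity $\sum_{\pi\ge\sigma}\mu(\pi,\hat1)=\mathbf 1_{\sigma=\hat1}$. The paper additionally remarks that this is an instance of the crosscut theorem, but the argument is otherwise identical.
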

\begin{proof}
Essentially, the lemma follows from the crosscut theorem; see~\cite[Corollary~3.9.4]{stanley_book_enum_comb_1}. For completeness, we provide a proof. If $\mathcal{U}_T = \varnothing$, then both sides of~\eqref{eq:lem:multiplicity_combinatorial_formula} are $0$. In the following, let $\mathcal{U}_T \neq \varnothing$. Then,
since $\mathcal{U}_T$ is an upset, $\hat 1\in \mathcal{U}_T $ and we have
\[
\mathcal{U}_T=\bigcup_{\tau\in \min(\mathcal{U}_T)} [\tau,\hat 1],
\]
where $[\tau,\hat 1] = \{\pi\in \Pi_m:  \pi \geq \tau\}$.
By the inclusion-exclusion formula for indicator functions, this implies
\[
\mathbf{1}_{\mathcal{U}_T}
=\mathbf{1}_{\bigcup_{\tau\in \min(\mathcal{U}_T)}[\tau,\hat 1]}
=\sum_{\varnothing\neq J\subseteq \min(\mathcal{U}_T)}(-1)^{|J|+1}\,
\mathbf{1}_{\bigcap_{\tau\in J}[\tau,\hat 1]}.
\]
Since $\bigcap_{\tau\in J}[\tau,\hat 1]=\{\pi\in \Pi_m: \pi\ge \tau\ \forall \tau\in J\}
=\{\pi\in \Pi_m:\pi\ge \bigvee J\}=[\bigvee J,\hat 1]$, we get
$$
\mathbf{1}_{\mathcal{U}_T}(\pi)
=\sum_{\varnothing\neq J\subseteq \min(\mathcal{U}_T)}(-1)^{|J|+1}\,
\mathbf{1}_{[\bigvee J,\hat 1]}(\pi),
\qquad \pi \in \Pi_m. 
$$
It follows that 
$$
{\rm{mult}}(T) 
= 
\sum_{\pi\in \mathcal{U}_T}\mu(\pi,\hat 1)
=\sum_{\pi\in \Pi_m}\bm{1}_{\mathcal{U}_T}(\pi) \mu(\pi,\hat 1)
=\sum_{\varnothing\neq J\subseteq \min(\mathcal{U}_T)}(-1)^{|J|+1}
\sum_{\pi\in \Pi_m}\bm{1}_{[\bigvee J,\hat 1]}(\pi) \mu(\pi,\hat 1).
$$
The inner sum restricts to $\pi\ge \bigvee J$, hence
$$
{\rm{mult}}(T)
=\sum_{\varnothing\neq J\subseteq \min(\mathcal{U}_T)}(-1)^{|J|+1}
\sum_{\pi\ge \bigvee J}\mu(\pi,\hat 1)
=
\sum_{\varnothing\neq J\subseteq \min(\mathcal{U}_T)}(-1)^{|J|+1} \bm{1}_{\bigvee J=\hat 1},
$$
where in the last step we applied the identity $\sum_{\pi\ge \sigma}\mu(\pi,\hat 1) = \mathbf{1}_{\sigma = \hat 1}$ with $\sigma=\bigvee J$.
\end{proof}

\begin{example}
Let us show that the multiplicity may take values other than $0$ and $1$. 
Let  $k=4$, and let $b_r:= \varepsilon_r a_{i_r}$, $r=1,\ldots, 4$, be such that  $(b_1,b_2,b_3,b_4)=(1,-1,1,-1)$. The $T$-zero-sum subsets are
$\{1,2\},\ \{3,4\},\ \{1,4\},\ \{2,3\},$ and  $[4]$.
Hence the $T$-zero-sum partitions are
\[
\pi_1=\big\{\{1,2\},\{3,4\}\big\},\qquad
\pi_2=\big\{\{1,4\},\{2,3\}\big\},\qquad
\hat 1=\big\{\{1,2,3,4\}\big\}.
\]
The minimal elements of $\mathcal U_T= \{\pi_1,\pi_2, \hat 1\}$ are $\pi_1$ and $\pi_2$.  Since $\pi_1\vee\pi_2=\hat 1$, the multiplicity equals $-1$.
\end{example}

We are now ready to state a formula for the $m$-th cumulant of the random variable $S_n$.
\begin{lemma}\label{lem:cumulants_combinatorial_formula}
Fix $(a_k)_{k\geq 1} \subseteq \N$, a sequence of natural numbers, and recall that $S_n(\omega) = \sum_{k=1}^n  \cos (2 \pi a_k \omega)$ for $\omega \in [0,1]$. Recall that  $\kappa_m(S_n)$ denotes the $m$-th cumulant of $S_n$, for $m,n\in \mathbb N$. Then
\[
\kappa_m(S_n) \;=\; \frac{1}{2^{m}}\, \sum_{T\in\mathcal{T}_m(n)} {\rm{mult}}(T).
\]
\end{lemma}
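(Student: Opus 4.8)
The plan is to combine the moment formula of Lemma \ref{lem:moments} with the moment-to-cumulant conversion via the Möbius function on the partition lattice $\Pi_m$, and then reorganize the resulting double sum by grouping tuples according to their zero-sum partitions. First I would recall the standard combinatorial identity expressing cumulants through moments: for any random variable $X$ (with all moments finite), writing $\mu_m = \mathbb E[X^m]$, one has
\[
\kappa_m(X) = \sum_{\pi\in\Pi_m} \mu(\pi,\hat 1)\prod_{B\in\pi}\mathbb E\big[X^{|B|}\big];
\]
see \cite[Section 2.3]{peccati_taqqu_book_wiener_chaos} or \cite[p.~21]{csp2006}. Here the key point is that in our setting the joint moment structure is especially simple: for a fixed $n$, if we write $S_n = \sum_{k=1}^n X_k$ with $X_k(\omega) = \cos(2\pi a_k\omega)$, then $\mathbb E[X_k^{|B|}]$ would not directly appear, so I would instead work with the ``diagonal'' version of the moment--cumulant relation adapted to sums, namely the formula expressing $\kappa_m(S_n)$ directly as a sum over $\Pi_m$ of products over blocks of the mixed moments $\mathbb E\big[\prod_{r\in B} X_{k_r}\big]$, summed over all index assignments $(k_1,\dots,k_m)$. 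Concretely, the cleanest route is:
\[
\kappa_m(S_n) = \sum_{\pi\in\Pi_m}\mu(\pi,\hat 1)\sum_{k_1,\dots,k_m=1}^n \prod_{B\in\pi}\mathbb E\Big[\prod_{r\in B}X_{k_r}\Big].
\]

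Next I would insert the exponential expansion $\cos(2\pi a_k\omega) = \tfrac12(e^{2\pi i a_k\omega}+e^{-2\pi i a_k\omega})$ exactly as in the proof of Lemma \ref{lem:moments}, so that for each block $B$,
\[
\mathbb E\Big[\prod_{r\in B}X_{k_r}\Big]
= \frac{1}{2^{|B|}}\sum_{(\varepsilon_r)_{r\in B}\in\{\pm1\}^{B}} \mathbf 1_{\sum_{r\in B}\varepsilon_r a_{k_r}=0}.
\]
Substituting this for every block, the total prefactor becomes $\prod_{B\in\pi}2^{-|B|} = 2^{-m}$, and the sum over the block-wise sign vectors combines into a single sum over $(\varepsilon_1,\dots,\varepsilon_m)\in\{\pm1\}^m$. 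The product of indicators over blocks then reads exactly $\prod_{B\in\pi}\mathbf 1_{\Sigma(B;T)=0}$, which by definition equals $\mathbf 1_{\pi\in\mathcal U_T}$ for the tuple $T=(k_1,\dots,k_m;\varepsilon_1,\dots,\varepsilon_m)$. Hence
\[
\kappa_m(S_n) = \frac{1}{2^m}\sum_{\pi\in\Pi_m}\mu(\pi,\hat 1)\sum_{T\in\mathcal T_m(n)}\mathbf 1_{\pi\in\mathcal U_T}
= \frac{1}{2^m}\sum_{T\in\mathcal T_m(n)}\sum_{\pi\in\mathcal U_T}\mu(\pi,\hat 1),
\]
where I have swapped the order of summation (all sums are finite). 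The inner sum is precisely $\mathrm{mult}(T)$ by definition, which yields the claim.

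The step I expect to require the most care is justifying the diagonalized moment--cumulant formula in the second displayed equation, i.e.\ passing from the classical scalar identity $\kappa_m(X)=\sum_\pi \mu(\pi,\hat 1)\prod_{B\in\pi}\mathbb E[X^{|B|}]$ to the version where $X=S_n$ is itself a sum and the product over blocks involves mixed moments of the summands indexed by an independent sum over $(k_1,\dots,k_m)$. This is really the multilinearity of joint cumulants: $\kappa_m(S_n)=\sum_{k_1,\dots,k_m}\kappa(X_{k_1},\dots,X_{k_m})$ together with the moment expression $\kappa(X_{k_1},\dots,X_{k_m})=\sum_{\pi}\mu(\pi,\hat1)\prod_{B\in\pi}\mathbb E[\prod_{r\in B}X_{k_r}]$; both are standard (see \cite[Section~3]{peccati_taqqu_book_wiener_chaos}), and I would simply cite them rather than reprove them. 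Everything else — the exponential expansion, the collapsing of the $2^{-|B|}$ factors into $2^{-m}$, the merging of block-wise sign sums, and the recognition of $\mathbf 1_{\pi\in\mathcal U_T}$ — is a routine bookkeeping computation entirely parallel to the proof of Lemma \ref{lem:moments}.
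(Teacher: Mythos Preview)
Your proof is correct and follows essentially the same approach as the paper: multilinearity of joint cumulants, the moment--cumulant formula on $\Pi_m$, and the orthogonality relation $\mathbb E[e^{2\pi i \ell U}]=\mathbf 1_{\ell=0}$. The only cosmetic difference is that the paper expands $\cos(2\pi a_k\omega)$ into exponentials \emph{before} applying multilinearity (so the summands become the $2n$ terms $\tfrac12 e^{\pm 2\pi i a_k U}$, and the tuple $T$ appears already at that stage), whereas you apply multilinearity to the $n$ cosine summands $X_k$ and only expand inside each block moment; the resulting computation is identical.
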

\begin{proof}
Let $U\sim\mathrm{Unif}[0,1]$ and  observe that  $S_n$ has the same distribution as $\sum_{k=1}^n \cos(2\pi a_k U) = \frac 12\sum_{k=1}^n (e^{2\pi i a_k U} + e^{-2\pi i a_k U})$.  By multilinearity of cumulants,
\[
\kappa_m(S_n)=\frac{1}{2^m}\sum_{T = (i_1,\dots,i_m; \varepsilon_1,\dots,\varepsilon_m)\in \mathcal{T}_m(n)}\kappa\!\big(e^{2\pi i\varepsilon_1 a_{i_1}U},\dots,e^{2\pi i\varepsilon_m a_{i_m}U}\big),
\]
where $\kappa(\cdot,\dots,\cdot)$ denotes the joint cumulant of the random variables $e^{2\pi i \varepsilon_1 a_{i_1}U}, \dots , e^{2\pi i\varepsilon_m a_{i_m} U}$ (see, e.g., \cite[Section~3.1]{peccati_taqqu_book_wiener_chaos} for its definition).

The classical moment-cumulant formula~\cite[Proposition~3.2.1, Equation~(3.2.7)]{peccati_taqqu_book_wiener_chaos},   expressing the joint cumulant of random variables as an alternate sum of products of their mixed moments, yields
\begin{align*}
\kappa\!\big(e^{2\pi i\varepsilon_1 a_{i_1}U},\dots,e^{2\pi i\varepsilon_m a_{i_m}U}\big)
&=
\sum_{\pi\in\Pi_m}\mu(\pi,\hat1)\prod_{B\in\pi}\mathbb E \left[\prod_{j\in B} e^{2\pi i\varepsilon_j a_{i_j}U}\right]
\\
&=
\sum_{\pi\in\Pi_m}\mu(\pi,\hat1)\prod_{B\in\pi}\mathbb E \left[ e^{2\pi i U \sum _{j\in B} \varepsilon_j a_{i_j}}\right]
\\
&=
\sum_{\pi\in\Pi_m}\mu(\pi,\hat1)\prod_{B\in\pi}\mathbf 1_{\sum _{j\in B} \varepsilon_j a_{i_j} = 0}
\\
&=
\sum_{\pi \in \mathcal U_T} \mu (\pi, \hat 1).
\end{align*}
On the right-hand side we recognize ${\rm{mult}}(T)$, and so the proof is complete.
\end{proof}

\section{Proof of Theorem~\ref{theo:fibonacci_general}} \label{sec:proof_fibonacci_case}
In this section,  we consider lacunary sums $S_n(\omega) = \sum_{k=1}^n  \cos (2 \pi a_k \omega)$, $\omega\in[0,1]$, where $a_1,a_2,\ldots$ are positive integers given by
$$
a_n = c_1 \lambda_1^n + \ldots+ c_d \lambda_d^n, \qquad n\in \mathbb N,
$$
where $d\in \N$ and
\begin{itemize}
\item $\lambda_1,\ldots, \lambda_d\in \mathbb C$ are roots of some irreducible degree $d$ polynomial with integer coefficients;
\item $c_1,\ldots, c_d$ are complex numbers;
\item the following \textit{dominant root condition} holds:
$$
\eta:= \lambda _1 \text{ is real }, \qquad \lambda_1 >1, \qquad \lambda_1 > \max\{|\lambda_2|,\ldots, |\lambda_d|\} =: \rho, \qquad c_1 \neq 0.
$$
\end{itemize}



We prepare the proof of Theorem~\ref{theo:fibonacci_general} with a sequence of lemmas.

\begin{lemma}\label{lem:fibo_sum_upper_bound}
Fix $m\in \N$. There is $L_{1}(m)\in \mathbb N$ such that for all $i_1,\ldots,i_m\in \mathbb N$ we have
$$
|a_{i_1} + \ldots +  a_{i_m}|\leq \eta^{\max\{i_1,\ldots, i_m\} + L_{1}(m)}.
$$
Moreover, we can choose $L_1(m)$ to be increasing in $m$.
\end{lemma}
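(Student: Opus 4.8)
The plan is to bound each individual term $a_{i_r}$ from above and then sum. Writing $M := \max\{i_1,\ldots,i_m\}$, the triangle inequality gives
$$
|a_{i_1} + \ldots + a_{i_m}| \leq \sum_{r=1}^m |a_{i_r}| \leq m \max_{1\leq r\leq m} |a_{i_r}| \leq m \max_{1\leq i\leq M} |a_i|,
$$
where the last step uses that each $i_r \leq M$. So it suffices to bound $|a_i|$ for $1\leq i\leq M$ by a constant multiple of $\eta^M = \eta^{\max\{i_1,\ldots,i_m\}}$, and then absorb the constant factors $m$ and that multiple into an additive shift $L_1(m)$ in the exponent, using $\eta>1$.

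The key estimate is that $|a_i| = \mathcal{O}(\eta^i)$ uniformly in $i$. From the representation $a_i = c_1\lambda_1^i + \ldots + c_d\lambda_d^i$ together with the dominant root condition $\eta = \lambda_1 > \rho = \max\{|\lambda_2|,\ldots,|\lambda_d|\}$, we get
$$
|a_i| \leq |c_1|\,\eta^i + \big(|c_2| + \ldots + |c_d|\big)\,\rho^i \leq \Big(|c_1| + |c_2| + \ldots + |c_d|\Big)\,\eta^i =: C\,\eta^i,
$$
since $\rho^i \leq \eta^i$. (In fact since the $a_i$ are positive integers we even have $a_i \geq 1$, but only the upper bound is needed here.) Combining the two displays, for any $i_1,\ldots,i_m$ with maximum $M$,
$$
|a_{i_1} + \ldots + a_{i_m}| \leq m\,C\,\eta^{M}.
$$
Now choose $L_1(m)$ to be the smallest nonnegative integer with $\eta^{L_1(m)} \geq mC$; this exists because $\eta>1$, and it makes $L_1(m)$ nondecreasing in $m$ since $mC$ is increasing in $m$ (and one may take $L_1(m) = \lceil \log_\eta(mC)\rceil_+$, or simply redefine $L_1(m) := \max\{L_1(1),\ldots,L_1(m)\}$ afterwards to force monotonicity without changing validity). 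With this choice, $m C \eta^M \leq \eta^{L_1(m)}\eta^M = \eta^{M + L_1(m)}$, which is exactly the claimed bound.

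There is essentially no obstacle here; the lemma is a routine consequence of the dominant root condition, and the only mild care needed is to make the constant $L_1(m)$ an \emph{integer} that is \emph{increasing} in $m$, which the final adjustment handles. This kind of crude exponential bound on partial sums is the natural first ingredient for later controlling, via Lemma~\ref{lem:cumulants_combinatorial_formula}, which tuples $T\in\mathcal{T}_m(n)$ can possibly be zero-sum tuples when one of the indices is large.
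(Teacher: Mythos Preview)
Your proof is correct and follows essentially the same approach as the paper: both use the dominant root condition to get $|a_i|\le C\eta^i$ for a constant $C$ depending only on the data $c_1,\ldots,c_d,\lambda_1,\ldots,\lambda_d$, bound the sum by $mC\,\eta^{\max\{i_1,\ldots,i_m\}}$, and then absorb $mC$ into the exponent by choosing $L_1(m)$ with $\eta^{L_1(m)}\ge mC$. The paper phrases the middle step as $C(\eta^{i_1}+\ldots+\eta^{i_m})\le Cm\,\eta^{\max}$ rather than passing through $m\max_r|a_{i_r}|$, but this is a cosmetic difference.
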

\begin{proof}
From the formula $a_n = c_1 \lambda_1^n + \ldots+ c_d \lambda_d^n$ and the dominant root condition it follows that $a_n = c_1 \eta^n + \mathcal O(\rho^n)$ as $n\to\infty$, where $\eta >1$ and  $\rho \in (0,\eta)$. Recall also that $a_n\geq 1$ for all $n\in \N$.  Hence,  for suitable constants $0 < c < C$  we have
\begin{equation}\label{eq:fibo_exponential_bounds}
c \eta^n \leq a_n \leq C \eta^n  \quad \text{ for all } \quad n\in \mathbb N.
\end{equation}
Since $\eta >1$, it follows that
$$
|a_{i_1} + \ldots +  a_{i_m}| \leq C (\eta^{i_1} + \ldots + \eta^{i_m}) \leq Cm \eta^{\max\{i_1,\ldots, i_m\}}.
$$
To complete the proof, choose a sufficiently large $L_1(m)$ that fulfills $Cm \leq \eta^{L_1(m)}$.
\end{proof}

In the next two lemmas, we show that a linear relation of the form $\eps_1 a_{i_1} + \ldots +  \eps_m a_{i_m} = 0$ for the terms of the sequence $(a_n)_{n\in \N}$ is \textit{essentially} equivalent to the polynomial relation $\eps_1 \eta^{i_1} + \ldots + \eps_m \eta^{i_m} = 0$ for the dominant root $\eta$. One direction is easy.

\begin{lemma}\label{lem:fibo_relation_lambda_implies_relation_a_i}
Fix $m\in \N$.  For all  $\eps_1,\ldots,\eps_m\in \{\pm 1\}$ and $i_1,\ldots,i_m \in \N$
\begin{equation}\label{eq:relation_lambda_implies_relation_sequence}
\eps_1 \eta^{i_1} + \ldots + \eps_m \eta^{i_m} = 0 \quad \text{ implies } \quad \eps_1 a_{i_1} + \ldots +  \eps_m a_{i_m} = 0.
\end{equation}
\end{lemma}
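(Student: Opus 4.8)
The plan is to lift the scalar relation over $\eta=\lambda_1$ to the same relation over each Galois conjugate $\lambda_2,\dots,\lambda_d$, and then to reconstruct $a_{i_r}$ from its representation~\eqref{eq:a_n_recursive_binet_representation}. First I would encode the hypothesis as a polynomial identity: put
\[
Q(z):=\sum_{r=1}^m \eps_r\, z^{i_r}\in\mathbb Z[z],
\]
so that the assumption $\eps_1\eta^{i_1}+\dots+\eps_m\eta^{i_m}=0$ reads exactly $Q(\eta)=0$. If $Q$ is the zero polynomial, then for every exponent $e$ the partial sum $\sum_{r:\,i_r=e}\eps_r$ vanishes, whence $\sum_{r=1}^m\eps_r a_{i_r}=\sum_e a_e\big(\sum_{r:\,i_r=e}\eps_r\big)=0$ and there is nothing more to prove; so one may assume $Q\not\equiv 0$ from here on.

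Next I would invoke irreducibility of $P$. By Gauss's Lemma, since $P$ is irreducible in $\mathbb Z[z]$ and $\deg P=d\geq 1$, it is also irreducible in $\mathbb Q[z]$; and since $P(\eta)=0$ (because $\eta=\lambda_1$ is a root of $P$ by~(i)), the polynomial $P$ is, up to a nonzero rational factor, the minimal polynomial of $\eta$ over $\mathbb Q$. Consequently $P\mid Q$ in $\mathbb Q[z]$, and therefore every root of $P$ is a root of $Q$. In particular, for each $j\in\{1,\dots,d\}$,
\[
\eps_1\lambda_j^{i_1}+\dots+\eps_m\lambda_j^{i_m}=Q(\lambda_j)=0.
\]

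Finally I would reassemble the sequence: multiplying the $j$-th identity by $c_j$, summing over $j=1,\dots,d$, interchanging the order of summation, and using $a_{i_r}=c_1\lambda_1^{i_r}+\dots+c_d\lambda_d^{i_r}$ from~\eqref{eq:a_n_recursive_binet_representation}, we obtain
\[
0=\sum_{j=1}^d c_j\,Q(\lambda_j)=\sum_{r=1}^m\eps_r\sum_{j=1}^d c_j\lambda_j^{i_r}=\sum_{r=1}^m\eps_r\,a_{i_r},
\]
which is precisely~\eqref{eq:relation_lambda_implies_relation_sequence}. I do not expect any genuine obstacle here; the only step that deserves care is the transition ``$P$ irreducible over $\mathbb Z$ with $P(\eta)=0$'' $\Rightarrow$ ``every polynomial in $\mathbb Z[z]$ vanishing at $\eta$ vanishes at all the $\lambda_j$'', i.e.\ the fact that the Galois conjugates of $\eta$ satisfy the same integer linear relations as $\eta$; this is the standard minimal-polynomial argument, combined with Gauss's Lemma to pass between $\mathbb Z[z]$ and $\mathbb Q[z]$, together with the separate (trivial) treatment of the degenerate case $Q\equiv 0$.
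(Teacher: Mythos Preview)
Your proof is correct and follows essentially the same approach as the paper's: both arguments show that the relation $Q(\eta)=0$ propagates to all conjugates $\lambda_1,\dots,\lambda_d$ and then take the $c_j$-linear combination to recover $\sum_r \eps_r a_{i_r}=0$. The only cosmetic difference is that the paper phrases the propagation step via the transitive action of the Galois group of the splitting field, whereas you phrase it via minimal-polynomial divisibility ($P\mid Q$ in $\mathbb Q[z]$); these are two standard formulations of the same fact.
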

\begin{proof}
Recall that $\lambda_1,\ldots, \lambda_d$ are roots of an irreducible degree $d$ polynomial over $\mathbb Q$. The Galois group  of the splitting field $K= \mathbb Q(\lambda_1,\ldots, \lambda_d)$ of this polynomial acts transitively on $\lambda_1,\ldots, \lambda_d$. So, $\eta$ can be mapped to any $\lambda_j$ by a suitable  automorphism $\varphi_j$ of the field $K$, and $\eps_1 \eta^{i_1} + \ldots + \eps_m \eta^{i_m} = 0$ implies   $\eps_1 \lambda_j^{i_1} + \ldots + \eps_m \lambda_j^{i_m}= 0$ for every $j=1,\ldots, d$. Taking a linear combination of these identities with coefficients $c_1,\ldots, c_d$ gives $\eps_1 a_{i_1} + \ldots +  \eps_m a_{i_m} = 0$.
\end{proof}

In general, the exact converse of~\eqref{eq:relation_lambda_implies_relation_sequence} need not hold. For example, for the Fibonacci sequence we have $3 a_2 = 3\cdot 2 = 2a_4$ but $3 \eta^2 \neq 2 \eta^3$. As we shall see in Lemmas~\ref{lem:fibo_relation_a_i_implies_relation_lambda} and~\ref{lem:fibo_either_0_or_large_sum}, such  ``sporadic relations'' disappear if all indices $i_1,\ldots,i_m$ are sufficiently large. Moreover, we shall see that  every ``almost cancellation'' among the $a_i$'s must be an exact cancellation.

If $i_1,\ldots, i_m\in \mathbb N$ are integers, we denote by $i_{(1)} \leq \ldots \leq i_{(m)}$ the same numbers sorted in a nondecreasing way. We define the \textit{gap} of the vector $(i_1,\ldots, i_m)$ as
$$
{\mathrm{gap}} (i_1,\ldots, i_m) := \max\{i_{(2)}- i_{(1)},i_{(3)}- i_{(2)},\dots, i_{(m)}- i_{(m-1)}\} \in \N_0.
$$
The next result is a partial converse to Lemma~\ref{lem:fibo_relation_lambda_implies_relation_a_i}.



\begin{lemma}\label{lem:fibo_relation_a_i_implies_relation_lambda}
Fix $m\in \N$ and $\ell\in \mathbb N$. There exist numbers $L_{2}(m,\ell) \in \mathbb N$ and $K_2(m,\ell)\in \N$ such that  the following holds: If $\eps_1,\ldots,\eps_m\in \{\pm 1\}$ and $i_1,\ldots,i_m \geq K_2(m,\ell)$ are integers satisfying ${\mathrm{gap}} (i_1,\ldots, i_m)\leq \ell$, then
$$
\eps_1 a_{i_1} + \ldots +  \eps_m a_{i_m} = 0 \quad \text{ implies }\quad  \eps_1 \eta^{i_1} + \ldots + \eps_m \eta^{i_m} = 0.
$$
Moreover, if $\eps_1 a_{i_1} + \ldots +  \eps_m a_{i_m} \neq 0$, then this number is ``large'' in absolute value in the sense that
\begin{equation}\label{eq:no_cancellation}
|\eps_1 a_{i_1} + \ldots +  \eps_m a_{i_m}| \geq  \eta^{\min \{i_1,\ldots, i_m\} - L_2(m,\ell)}.
\end{equation}
\end{lemma}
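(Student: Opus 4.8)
The plan is to exploit the decomposition $a_n = c_1 \eta^n + \mathcal O(\rho^n)$ coming from~\eqref{eq:a_n_recursive_binet_representation} and the dominant root condition, writing $a_n = c_1 \eta^n + E_n$ with $|E_n| \leq C_0 \rho^n$ for a suitable constant $C_0$ and all $n \in \mathbb N$. Given indices $i_1,\ldots,i_m$ with ${\mathrm{gap}}(i_1,\ldots,i_m)\leq \ell$, set $i_* := \min\{i_1,\ldots,i_m\}$; then $\max\{i_1,\ldots,i_m\} \leq i_* + (m-1)\ell$, so all indices are confined to a window of bounded width around $i_*$. First I would observe that $|\eps_1 E_{i_1}+\ldots+\eps_m E_{i_m}| \leq m C_0 \rho^{i_*}$, and hence
$$
\Big| \eps_1 a_{i_1} + \ldots + \eps_m a_{i_m} - c_1 \big(\eps_1 \eta^{i_1} + \ldots + \eps_m \eta^{i_m}\big) \Big| \leq m C_0 \rho^{i_*}.
$$
Thus the whole question reduces to understanding the quantity $Q := \eps_1 \eta^{i_1} + \ldots + \eps_m \eta^{i_m}$, which we may factor as $Q = \eta^{i_*} \cdot R$, where $R = \sum_{r} \eps_r \eta^{i_r - i_*}$ is a polynomial (with integer coefficients and at most $m$ terms in each power) evaluated at $\eta$, involving only exponents in $\{0,1,\ldots,(m-1)\ell\}$ and coefficients bounded by $m$ in absolute value.

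The key dichotomy is then: either $R = 0$, in which case $Q=0$ and Lemma~\ref{lem:fibo_relation_lambda_implies_relation_a_i} already gives $\eps_1 a_{i_1}+\ldots+\eps_m a_{i_m}=0$, so there is nothing left to prove; or $R \neq 0$, in which case $|R|$ is bounded \emph{below} by a positive constant $\varepsilon(m,\ell) > 0$ depending only on $m$ and $\ell$ — this is because there are only finitely many such polynomials $R$ (finitely many choices of exponents $\leq (m-1)\ell$ and of coefficients bounded by $m$), none of which vanishes at $\eta$ since $\eta$ is irrational (indeed algebraic of degree $d\geq 1$; if $d=1$ then $\eta$ is an integer $\geq 2$ and one checks directly, or simply notes $R$ is a nonzero integer, while if $d\geq 2$ then $\eta\notin\mathbb Q$ and a nonzero integer polynomial of degree $\leq (m-1)\ell$ cannot have $\eta$ as a root once... — actually here one should be slightly careful: a nonzero integer polynomial \emph{can} vanish at an algebraic $\eta$, e.g. its minimal polynomial. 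So the correct statement is that $R$ is a \emph{specific} nonzero polynomial and $|R|\neq 0$ is not automatic; rather, we should set $\varepsilon(m,\ell) := \min\{|R(\eta)| : R \text{ as above}, R(\eta)\neq 0\}$, which is a positive number, and argue separately for those finitely many $R$ that do vanish at $\eta$). Let me restate this cleanly: partition the finitely many candidate polynomials $R$ into those with $R(\eta)=0$ and those with $R(\eta)\neq 0$; for the former, Lemma~\ref{lem:fibo_relation_lambda_implies_relation_a_i} forces the $a$-relation to hold exactly, so $\eps_1 a_{i_1}+\ldots+\eps_m a_{i_m}=0$ and again there is nothing to prove; for the latter, $|R(\eta)| \geq \varepsilon(m,\ell) > 0$.

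In the remaining case $R(\eta)\neq 0$ we have $|Q| = \eta^{i_*}|R(\eta)| \geq \varepsilon(m,\ell)\,\eta^{i_*}$, so combining with the error estimate,
$$
|\eps_1 a_{i_1} + \ldots + \eps_m a_{i_m}| \geq |c_1|\,\varepsilon(m,\ell)\,\eta^{i_*} - m C_0 \rho^{i_*}.
$$
Since $\rho < \eta$, choosing $K_2(m,\ell)$ large enough that $m C_0 \rho^{i_*} \leq \tfrac12 |c_1|\varepsilon(m,\ell)\eta^{i_*}$ for all $i_* \geq K_2(m,\ell)$ — which is possible because $(\rho/\eta)^{i_*}\to 0$ — this lower bound is at least $\tfrac12 |c_1|\varepsilon(m,\ell)\eta^{i_*} > 0$, which in particular is nonzero; hence in this case $\eps_1 a_{i_1}+\ldots+\eps_m a_{i_m}\neq 0$, so the implication ``$=0 \Rightarrow \eta$-relation'' holds vacuously, and moreover~\eqref{eq:no_cancellation} follows by absorbing the constant $\tfrac12|c_1|\varepsilon(m,\ell)$ into an additive shift $L_2(m,\ell)$ of the exponent (i.e. pick $L_2(m,\ell)$ with $\eta^{-L_2(m,\ell)} \leq \tfrac12|c_1|\varepsilon(m,\ell)$). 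The main obstacle, and the only genuinely delicate point, is the finiteness-and-separation argument for $R(\eta)$: one must correctly handle the possibility that some of these bounded-height, bounded-degree integer polynomials \emph{do} vanish at the algebraic number $\eta$ (unlike in the transcendental case of Lemma~\ref{lemma_poly}), and dispatch those cases via Lemma~\ref{lem:fibo_relation_lambda_implies_relation_a_i} rather than via a lower bound; everything else is a routine quantitative comparison of $\eta^{i_*}$ against $\rho^{i_*}$.
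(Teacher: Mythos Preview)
Your approach is essentially identical to the paper's: decompose $a_n$ into the dominant term $c_1\eta^n$ plus an error of order $\rho^n$, factor out $\eta^{i_*}$ from the principal part, observe that the reduced quantity $R$ ranges over a finite set (by the gap bound), dispatch the case $R(\eta)=0$ via Lemma~\ref{lem:fibo_relation_lambda_implies_relation_a_i}, and in the case $R(\eta)\neq 0$ take the minimum positive value $\varepsilon(m,\ell)$ and beat the error term using $\rho<\eta$. One small slip: your bound $|\sum_r \eps_r E_{i_r}| \leq mC_0\rho^{i_*}$ is only valid when $\rho\leq 1$; in general the largest error comes from the \emph{maximal} index, so one should bound by $mC_0\rho^{i_*+(m-1)\ell}$ when $\rho>1$, or (as the paper does) replace $\rho$ by some $\rho_1\in(\max\{\rho,1\},\eta)$ and absorb the extra factor $\rho_1^{(m-1)\ell}$ into the constant.
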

\begin{proof}
Since the statement is invariant under permutations of indices, there is no loss of generality in assuming that $i_1\leq \ldots \leq i_m$. Using $a_n= c_1 \lambda_1^n + \ldots+ c_d \lambda_d^n$ and recalling the notation $\eta= \lambda_1$ we write
\begin{equation}\label{eq:proof_no_vanish_small_gaps_tech1}
\eps_1 a_{i_1} + \ldots +  \eps_m a_{i_m} = c_1 (\eps_1 \eta^{i_1} + \ldots + \eps_m \eta^{i_m}) + \sum_{j=2}^d c_j (\eps_1 \lambda_j^{i_1} + \ldots + \eps_m \lambda_j^{i_m}).
\end{equation}
The first term on the right-hand side is the ``principal term''. Recalling that $i_1= \min \{i_1,\ldots, i_m\}$ we write it as
$$
c_1(\eps_1 \eta^{i_1} + \ldots + \eps_m \eta^{i_m}) = c_1 \eta^{i_{1}}  (\eps_1 +\eps_2 \eta^{i_2-i_{1}}  \ldots + \eps_m \eta^{i_m-i_{1}}).
$$
Due to the requirement ${\mathrm{gap}}(i_1,\ldots, i_m)\leq \ell$, there are only finitely many possible values of $|\eps_1 + \eps_2 \eta^{i_2-i_{1}} + \ldots + \eps_m \eta^{i_m-i_{1}}|$. Let $c_0 = c_0(m, \ell) > 0$ be the minimum of these values, ignoring $0$ if it appears in the list of the values. There is a dichotomy between the following two cases.

\vspace*{2mm}
\noindent
\textit{Case 1:} $\eps_1 +\eps_2 \eta^{i_2-i_{1}} + \ldots + \eps_m \eta^{i_m-i_{1}} = 0$. Then $\eps_1 \eta^{i_1} + \ldots + \eps_m \eta^{i_m} = 0$.  By Lemma~\ref{lem:fibo_relation_lambda_implies_relation_a_i}, we conclude that $\eps_1 a_{i_1} + \ldots +  \eps_m a_{i_m} = 0$, and there is nothing to prove.

\vspace*{2mm}
\noindent
\textit{Case 2:}  $\eps_1 +\eps_2 \eta^{i_2-i_{1}} + \ldots + \eps_m \eta^{i_m-i_{1}} \neq 0$. Our aim is to prove the bound~\eqref{eq:no_cancellation}, which implies $\eps_1 a_{i_1} + \ldots +  \eps_m a_{i_m}\neq 0$.  Now, in Case~2, $|\eps_1 +\eps_2 \eta^{i_2-i_{1}}  \ldots + \eps_m \eta^{i_m-i_{1}}| >c_0>0$ and the ``principal term'' in~\eqref{eq:proof_no_vanish_small_gaps_tech1} satisfies
\begin{equation}\label{eq:est_tech_1}
|c_1 (\eps_1 \eta^{i_1} + \ldots + \eps_m \eta^{i_m})| > c_0|c_1| \eta^{i_{1}}.
\end{equation}
To upper-bound the ``remainder'' term in~\eqref{eq:proof_no_vanish_small_gaps_tech1}, recall that $\rho = \max\{|\lambda_2|,\ldots, |\lambda_d|\} < \eta$ and let $\rho_1$ be such that $\max \{\rho, 1\} < \rho_1 < \eta$. Then, for $C'> \max \{|c_2|,\ldots, |c_d|\}$ we have
\begin{equation}\label{eq:est_tech_2}
 \left|\sum_{j=2}^d c_j (\eps_1 \lambda_j^{i_1} + \ldots + \eps_m \lambda_j^{i_m})\right| \leq  C'dm  \rho_1^{i_m}
 \leq
 C'dm \rho_1^{m\ell} \rho_1^{i_{1}} =: C'' \rho_1^{i_{1}}.
\end{equation}
In the last step we used that $i_m \leq i_1 + m\ell$, which follows from ${\mathrm{gap}}(i_1,\ldots, i_m)\leq \ell$. Applying the estimates~\eqref{eq:est_tech_1} and~\eqref{eq:est_tech_2} to the terms appearing in~\eqref{eq:proof_no_vanish_small_gaps_tech1} and using the triangle inequality gives
$$
|\eps_1 a_{i_1} + \ldots +  \eps_m a_{i_m}| \geq  c_0|c_1| \eta^{i_{1}}  - C'' \rho_1^{i_{1}} = \eta^{i_{1}} (c_0|c_1|   - C'' (\rho_1/\eta)^{i_{1}}).
$$
Since $\rho_1/\eta<1$, the number $c_0|c_1|   - C'' (\rho_1/\eta)^{i_{1}}$ is larger than $c_0|c_1|/2$ for sufficiently large $i_1$.  We conclude that if $L_{2}(m,\ell) \in \mathbb N$ and $K_2(m,\ell)\in \N$ are sufficiently large, then $|\eps_1 a_{i_1} + \ldots +  \eps_m a_{i_m}| > \eta^{i_{1} - L_2(m,\ell)}$ for all $i_{1}\geq K_2(m,\ell)$. This completes the proof of~\eqref{eq:no_cancellation}.
\end{proof}

In the next lemma we remove the bounded gaps condition from  Lemma~\ref{lem:fibo_relation_a_i_implies_relation_lambda}.

\begin{lemma}\label{lem:fibo_either_0_or_large_sum}
For every $m\in \mathbb N$ there exist $L_3(m)\in \N$, $K_3(m)\in \N$ such that whenever $\eps_1,\ldots,\eps_m\in \{\pm 1\}$ and $i_1,\ldots,i_m> K_3(m)$ are integers, then
\begin{align*}
\text{ either } \quad
&\eps_1 a_{i_1} + \ldots +  \eps_m a_{i_m} = \eps_1 \eta^{i_1} + \ldots + \eps_m \eta^{i_m} = 0
\\
\text{ or } \quad
&|\eps_1 a_{i_1} + \ldots +  \eps_m a_{i_m}|\geq  \eta^{\min\{i_1,\ldots, i_m\} - L_3(m)}.
\end{align*}
Also, we can choose $L_3(m)$ and $K_3(m)$ to be increasing in $m$.
\end{lemma}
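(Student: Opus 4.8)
The plan is to prove the statement by strong induction on $m$, handling tuples whose exponents are clustered together by a direct appeal to Lemma~\ref{lem:fibo_relation_a_i_implies_relation_lambda}, and tuples with a large gap among the exponents by splitting off the top block and applying the induction hypothesis. First I would fix, for each $m$, a separation scale $\ell=\ell(m)\in\N$ that is large compared with $L_1(m)$ (from Lemma~\ref{lem:fibo_sum_upper_bound}) and with $\max_{1\le m'<m}L_3(m')$, but which crucially does \emph{not} depend on the constants $L_2(\cdot,\ell),K_2(\cdot,\ell)$ produced by Lemma~\ref{lem:fibo_relation_a_i_implies_relation_lambda}; concretely $\ell(m):=\max\{1,\ L_1(m)+\max_{1\le m'<m}L_3(m')\}$ (empty maximum equal to $0$). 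Given $\ell(m)$, I would then \emph{define} $L_3(m):=\max\{1,\ L_2(m,\ell(m)),\ \max_{m'<m}L_3(m'),\ \lceil-\log_\eta((\eta-1)\eta^{L_1(m)})\rceil\}$ and $K_3(m):=\max\{1,\ K_2(m,\ell(m)),\ \max_{m'<m}K_3(m'),\ L_3(m)\}$. These are finite because for the fixed pair $(m,\ell(m))$ Lemma~\ref{lem:fibo_relation_a_i_implies_relation_lambda} supplies $L_2(m,\ell(m)),K_2(m,\ell(m))$, and the recursion is consistent since $\ell(m)$ only uses quantities defined at stages $<m$; by construction $L_3,K_3$ are non-decreasing in $m$.

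Now let $\eps_1,\ldots,\eps_m\in\{\pm1\}$ and $i_1,\ldots,i_m>K_3(m)$; without loss of generality $i_1\le\cdots\le i_m$. \textbf{Case 1 (clustered exponents, which in particular always occurs when $m=1$).} If $\mathrm{gap}(i_1,\ldots,i_m)\le\ell(m)$, then since $i_1>K_3(m)\ge K_2(m,\ell(m))$, Lemma~\ref{lem:fibo_relation_a_i_implies_relation_lambda} applies with $\ell=\ell(m)$ and yields exactly the asserted dichotomy: if $\eps_1 a_{i_1}+\ldots+\eps_m a_{i_m}=0$ then also $\eps_1\eta^{i_1}+\ldots+\eps_m\eta^{i_m}=0$, and otherwise $|\eps_1 a_{i_1}+\ldots+\eps_m a_{i_m}|\ge\eta^{\,i_1-L_2(m,\ell(m))}\ge\eta^{\,i_1-L_3(m)}$, with $\min\{i_1,\ldots,i_m\}=i_1$. \textbf{Case 2 (large gap).} Otherwise pick $1\le q<m$ with $i_{q+1}-i_q=\mathrm{gap}(i_1,\ldots,i_m)>\ell(m)$ and split into the blocks $B=\{1,\ldots,q\}$ and $T=\{q+1,\ldots,m\}$, both of size strictly less than $m$. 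Since all indices exceed $K_3(m)\ge\max\{K_3(|B|),K_3(|T|)\}$, the induction hypothesis applies to each block. Applied to $T$: either $\sum_{r\in T}\eps_r a_{i_r}=0$ together with $\sum_{r\in T}\eps_r\eta^{i_r}=0$, or $|\sum_{r\in T}\eps_r a_{i_r}|\ge\eta^{\,i_{q+1}-L_3(|T|)}$.

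If the top block vanishes, then $\sum_{r\in[m]}\eps_r a_{i_r}=\sum_{r\in B}\eps_r a_{i_r}$, and applying the induction hypothesis to $B$ finishes this case: if the $B$-sum is $0$, so is its $\eta$-analogue, which with $\sum_{r\in T}\eps_r\eta^{i_r}=0$ gives $\sum_{r\in[m]}\eps_r\eta^{i_r}=0$; if the $B$-sum is nonzero, then $|\sum_{r\in[m]}\eps_r a_{i_r}|=|\sum_{r\in B}\eps_r a_{i_r}|\ge\eta^{\,i_1-L_3(|B|)}\ge\eta^{\,i_1-L_3(m)}$. If instead $|\sum_{r\in T}\eps_r a_{i_r}|\ge\eta^{\,i_{q+1}-L_3(|T|)}$, I bound the bottom block crudely by Lemma~\ref{lem:fibo_sum_upper_bound}, namely $|\sum_{r\in B}\eps_r a_{i_r}|\le\eta^{\,i_q+L_1(|B|)}\le\eta^{\,i_q+L_1(m)}$, and use the integrality bound $i_{q+1}\ge i_q+\ell(m)+1\ge i_q+L_1(m)+L_3(|T|)+1$ to obtain
\[
\Bigl|\textstyle\sum_{r\in[m]}\eps_r a_{i_r}\Bigr|\ \ge\ \eta^{\,i_{q+1}-L_3(|T|)}-\eta^{\,i_q+L_1(m)}\ \ge\ (\eta-1)\,\eta^{\,i_q+L_1(m)}\ \ge\ (\eta-1)\,\eta^{L_1(m)+L_3(m)}\,\eta^{\,i_1-L_3(m)}\ \ge\ \eta^{\,i_1-L_3(m)},
\]
using $i_q\ge i_1$ and the choice of $L_3(m)$; in particular the sum is nonzero, so the second alternative of the lemma holds. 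This exhausts all cases.

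The delicate point — and the reason a one-shot "decompose into well-separated clusters and let the topmost nonzero cluster dominate all the others" argument does \emph{not} go through — is that the constants $L_2(m,\ell)$ and $K_2(m,\ell)$ coming out of Lemma~\ref{lem:fibo_relation_a_i_implies_relation_lambda} may grow with $\ell$, in general faster than $\ell$ itself: in the proof of that lemma the remainder term is controlled only by a bound of the shape $C'dm\,\rho_1^{\,i_m}$ with $i_m\le i_1+m\ell$, so for a Perron number $\eta$ whose subdominant conjugates are not all inside the unit disc no single separation scale can be chosen large enough to beat $L_2(m,\ell)$. The recursion circumvents this because the bottom block is controlled solely through the scale-independent estimate of Lemma~\ref{lem:fibo_sum_upper_bound}, while the top block is controlled by the induction hypothesis at level $<m$, whose constant $L_3(|T|)$ is already a fixed finite number; hence the separation one must engineer, $\ell(m)>L_1(m)+\max_{m'<m}L_3(m')$, is a fixed target rather than a moving one. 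The only remaining work is the bookkeeping carried out above, keeping all constants consistent and non-decreasing in $m$.
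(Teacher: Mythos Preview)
Your proof is correct and follows essentially the same approach as the paper: strong induction on $m$, with the clustered case handled directly by Lemma~\ref{lem:fibo_relation_a_i_implies_relation_lambda} and the large-gap case handled by splitting off the top block, applying the induction hypothesis there, and controlling the bottom block via Lemma~\ref{lem:fibo_sum_upper_bound}. Your bookkeeping of the constants is somewhat more explicit than the paper's, and your final paragraph correctly identifies the obstruction to a non-inductive cluster decomposition (namely the possible dependence of $L_2(m,\ell)$ on $\ell$ when the subdominant conjugates lie outside the unit disc).
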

\begin{proof}
In this proof, we always assume that $i_1\leq \ldots\leq i_m$ -- there is no restriction of generality in doing this since the statement is invariant under permutations of indices. We use induction on $m$.

\vspace*{2mm}
\noindent
\textit{Induction base.} For $m=1$, we have $|\eps_1 a_{i_1}| \geq c \eta^{i_1}$, $i_1\in \N$,  by~\eqref{eq:fibo_exponential_bounds}. Choose $L_3(1)$ such that  $c>\eta^{-L_3(1)}$, then  $|\eps_1 a_{i_1}| > \eta^{i_1 - L_3(1)}$ for all $i_1\in \N$.

\vspace*{2mm}
\noindent
\textit{Induction assumption.}
Take some $M\in \N$. Suppose that we already proved the existence of $L_3'(M)= \max_{m=1,\ldots, m} L_3(m) \in \N$ and $K_3'(M)=\max_{m=1,\ldots, m} K_3(m)\in \N$ such that for every $m\in \{1,\ldots, M\}$, every $\eps_1,\ldots,\eps_m\in \{\pm 1\}$ and every integers $i_1,  \ldots, i_m> K_3'(M)$ we have
\begin{equation}\label{eq:tech_induction_assumption}
\begin{aligned}
\text{ either } \quad
&\eps_1 a_{i_1} + \ldots +  \eps_m a_{i_m} = \eps_1 \eta^{i_1} + \ldots + \eps_m \eta^{i_m} = 0
\\
\text{ or }   \quad
&|\eps_1 a_{i_1} + \ldots +  \eps_m a_{i_m}|\geq  \eta^{i_1 - L_3'(M)}.
\end{aligned}
\end{equation}

\vspace*{2mm}
\noindent
\textit{Induction step.}
Consider now some $\eps_1,\ldots,\eps_{M+1}\in \{\pm 1\}$ and some integers $i_1,\ldots,i_{M+1} > K_3(M+1)$, assuming without loss of generality  $i_1\leq \ldots\leq i_{M+1}$. Here, $K_3(M+1)\in \N$ is sufficiently large -- the exact choice will become clear from the argument below. Our aim is to show that
\begin{equation}\label{eq:tech_induction_need_to_prove}
\begin{aligned}
\text{ either } \quad
&\eps_1 a_{i_1} + \ldots +  \eps_{M+1} a_{i_{M+1}} = \eps_1 \eta^{i_1} + \ldots +  \eps_{M+1} \eta^{i_{M+1}} =  0
\\
\quad \text{ or }   \quad
&|\eps_1 a_{i_1} + \ldots +  \eps_{M+1} a_{i_{M+1}}| \geq  \eta^{i_1 - L_3(M+1)},
\end{aligned}
\end{equation}
for a sufficiently large  $L_3(M+1)$ to be chosen below.

\vspace*{2mm}
\noindent
\textit{Case 1:} ${\mathrm{gap}} (i_1,\ldots, i_{M+1}) > L_1(M) +  L_3'(M)$, where $L_1(M)$ comes from Lemma~\ref{lem:fibo_sum_upper_bound}.   This means that $i_j - i_{j-1} > L_1(M) +  L_3'(M)$ for some $j\in \{2,\ldots, M+1\}$. By Lemma~\ref{lem:fibo_sum_upper_bound},
\begin{equation}\label{eq:tech123}
|\eps_1 a_{i_1} + \ldots + \eps_{j-1} a_{i_{j-1}}| \leq \eta^{i_{j-1} + L_{1}(j-1)} \leq  \eta^{i_{j-1} + L_{1}(M)}.
\end{equation}
On the other hand, by the induction assumption applied to $(i_j,\ldots, i_{M+1})$ we have
\begin{equation*}
\begin{aligned}
\text{ either } \quad
&\eps_j a_{i_j} + \ldots + \eps_{M+1} a_{i_{M+1}} = \eps_j \eta^{i_j} + \ldots + \eps_{M+1} \eta^{i_{M+1}} = 0
\\
\text{ or }   \quad
&|\eps_j a_{i_j} + \ldots + \eps_{M+1} a_{i_{M+1}}| \geq  \eta^{i_j - L_3'(M)}.
\end{aligned}
\end{equation*}

\vspace*{2mm}
\noindent
\textit{Case 1a:} If $\eps_j a_{i_j} + \ldots + \eps_{M+1} a_{i_{M+1}} = \eps_j \eta^{i_j} + \ldots + \eps_{M+1} \eta^{i_{M+1}} =0$  then $\eps_1 a_{i_1} + \ldots + \eps_{M+1} a_{i_{M+1}} = \eps_1a_{i_1} + \ldots + \eps_j a_{i_j}$ and $\eps_1 \eta^{i_1} + \ldots + \eps_{M+1} \eta^{i_{M+1}} = \eps_1 \eta^{i_1} + \ldots + \eps_j \eta^{i_j}$ with $j\leq M$. Applying the induction assumption~\eqref{eq:tech_induction_assumption} with $m=j$ to $(i_1,\ldots, i_j)$ gives
\begin{equation*}
\begin{aligned}
\text{ either } \quad
&\eps_1 a_{i_1} + \ldots +  \eps_j a_{i_j} =  \eps_1 \eta^{i_1} + \ldots + \eps_j \eta^{i_j} = 0
\\
\text{ or }   \quad
&|\eps_1 a_{i_1} + \ldots +  \eps_j a_{i_j}|\geq  \eta^{i_1 - L_3'(M)}.
\end{aligned}
\end{equation*}
This gives~\eqref{eq:tech_induction_need_to_prove} provided we choose $L_3(M+1) \geq L_3'(M)$.

\vspace*{2mm}
\noindent
\textit{Case 1b:}
If $|\eps_j a_{i_j} + \ldots + \eps_{M+1} a_{i_{M+1}}| \geq  \eta^{i_j - L_3'(M)}$, the triangle inequality together with~\eqref{eq:tech123}  gives
\begin{align*}
|\eps_1 a_{i_1} + \ldots +  \eps_{M+1} a_{i_{M+1}}|
&\geq
\eta^{i_j - L_3'(M)} - \eta^{i_{j-1} + L_{1}(M)}
\\
&=
\eta^{i_{j-1}+L_{1}(M)}( \eta^{i_j - i_{j-1}- L_3'(M) - L_{1}(M)} - 1)
\\
&\geq
\eta^{i_1+L_{1}(M)}(\eta - 1),
\end{align*}
where in the last step we used $i_{j-1}\geq i_1$ and $i_j - i_{j-1}> L_3'(M) + L_{1}(M)$.  Choosing a sufficiently large $L_3(M+1)$ to ensure that $\eta^{i_{j-1}+L_{1}(M)}(\eta - 1) > \eta^{i_1 - L_3(M+1)}$ completes the proof of~\eqref{eq:tech_induction_need_to_prove}.

\vspace*{2mm}
\noindent
\textit{Case 2:} ${\mathrm{gap}}(i_1,\ldots, i_{M+1}) \leq L_1(M) +  L_3'(M) = : \ell$. Lemma~\ref{lem:fibo_relation_a_i_implies_relation_lambda} with $m= M+1$ yields
\begin{equation*}
\begin{aligned}
\text{ either } \quad
&\eps_1 a_{i_1} + \ldots +  \eps_{M+1} a_{i_{M+1}} = \eps_1 \eta^{i_1} + \ldots +  \eps_{M+1} \eta^{i_{M+1}} = 0
\\
\text{ or }   \quad
&|\eps_1 a_{i_1} + \ldots +  \eps_{M+1} a_{i_{M+1}}| \geq  \eta^{i_1 - L_2(M+1,\ell)}
\end{aligned}
\end{equation*}
provided that $i_1 > K_2 (M+1, \ell)$. If we choose $K_3(M+1) >K_2 (M+1, \ell)$ and $L_3 (M+1) > L_2 (M+1, \ell)$, then~\eqref{eq:tech_induction_need_to_prove} is satisfied. This completes the induction.
\end{proof}

The next lemma states that if a relation between the members of $(a_n)_{n\in \N}$ has a ``large gap'' between the indices, then this relation is reducible: it splits into two relations, one below the gap and one above the gap.
\begin{lemma}\label{lem:fibo_gaps_both_zero}
For every $m\in \mathbb N$ there exists $L_4(m)\in \N$ with the following property:  Whenever $\eps_1,\ldots,\eps_m\in \{\pm 1\}$ and $1\leq i_1\leq \ldots  \leq i_m$ are integers such that $i_{j}- i_{j-1} > L_4(m)$ for some $j\in \{2,\ldots, m\}$, then
$$
\eps_1 a_{i_1} + \ldots +  \eps_m a_{i_m} = 0
\quad \text{ implies } \quad
\eps_1 a_{i_1} + \ldots +  \eps_{j-1} a_{i_{j-1}}
=
\eps_j a_{i_j} + \ldots +  \eps_{m} a_{i_{m}} = 0.
$$
We can choose $L_4(m)$ to be increasing in $m$.
\end{lemma}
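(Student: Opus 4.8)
The plan is to obtain this splitting statement as a fairly direct consequence of the dichotomy of Lemma~\ref{lem:fibo_either_0_or_large_sum} combined with the crude upper bound of Lemma~\ref{lem:fibo_sum_upper_bound}. Concretely, I would set
\[
L_4(m) := \max\bigl\{\, L_1(m) + L_3(m),\ K_3(m)\,\bigr\},
\]
which is automatically increasing in $m$ since $L_1$, $L_3$ and $K_3$ all are. Given $\eps_1,\dots,\eps_m\in\{\pm1\}$ and $1\le i_1\le\dots\le i_m$ with $i_j-i_{j-1}>L_4(m)$ for some $j\in\{2,\dots,m\}$, I would split the relation $\eps_1 a_{i_1}+\dots+\eps_m a_{i_m}=0$ as $B+A=0$, where $B:=\eps_1 a_{i_1}+\dots+\eps_{j-1}a_{i_{j-1}}$ is the ``low'' part and $A:=\eps_j a_{i_j}+\dots+\eps_m a_{i_m}$ is the ``high'' part. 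Since $B=-A$, it suffices to prove that $A=0$.

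First I would bound the low part: as the $a_k$ are positive, the triangle inequality together with Lemma~\ref{lem:fibo_sum_upper_bound} applied to the $j-1\le m$ indices $i_1,\dots,i_{j-1}$ (using monotonicity of $L_1$) gives $|B|\le\eta^{\,i_{j-1}+L_1(m)}$. Next I would apply Lemma~\ref{lem:fibo_either_0_or_large_sum} to the high part: every one of $i_j,\dots,i_m$ is $\ge i_j$, and $i_j > i_{j-1}+L_4(m)\ge 1+K_3(m) > K_3(m)$, so, by monotonicity of $K_3$ and $L_3$, the lemma applies (with $m-j+1$ terms) and yields that either $A=0$, or $|A|\ge\eta^{\,i_j-L_3(m)}$. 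In the latter case the gap hypothesis $i_j-i_{j-1}>L_4(m)\ge L_1(m)+L_3(m)$ gives $i_j-L_3(m)>i_{j-1}+L_1(m)$, whence
\[
|A|\ \ge\ \eta^{\,i_j-L_3(m)}\ >\ \eta^{\,i_{j-1}+L_1(m)}\ \ge\ |B|\ =\ |A|,
\]
a contradiction. Hence $A=0$, and then $B=-A=0$, which is precisely the assertion.

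I do not anticipate any essential difficulty here, since the genuine arithmetic input --- that bounded-gap ``almost cancellations'' among the $a_i$ are forced to be exact cancellations --- has already been isolated in Lemmas~\ref{lem:fibo_relation_a_i_implies_relation_lambda} and~\ref{lem:fibo_either_0_or_large_sum}. The only points that need care are bookkeeping ones: choosing $L_4(m)$ large enough, and increasing in $m$, so that (i) a gap exceeding $L_4(m)$ automatically forces every index lying above the gap past the threshold $K_3(m)$ needed to invoke Lemma~\ref{lem:fibo_either_0_or_large_sum}, and (ii) the lower bound $\eta^{\,i_j-L_3(m)}$ on the high part strictly dominates the upper bound $\eta^{\,i_{j-1}+L_1(m)}$ on the low part; both requirements are met by the single choice of $L_4(m)$ above.
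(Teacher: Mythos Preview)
Your proof is correct and follows essentially the same approach as the paper: bound the low part by Lemma~\ref{lem:fibo_sum_upper_bound}, apply the dichotomy of Lemma~\ref{lem:fibo_either_0_or_large_sum} to the high part, and use the gap to rule out the non-zero alternative. Your observation that $L_4(m)=\max\{L_1(m)+L_3(m),\,K_3(m)\}$ is automatically increasing (via the monotonicity of $L_1,L_3,K_3$) is slightly tidier than the paper's explicit recursive requirement $L_4(m)>L_4(j)$ for $j<m$, but otherwise the arguments are the same.
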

\begin{proof}
The essential part of the argument is contained in the proof of Lemma~\ref{lem:fibo_either_0_or_large_sum}. For completeness, we provide the details. Choose $L_4(m)$ such that $L_4(m) > L_3(m) + L_{1}(m)$,  $L_4(m) > K_3(m)$ and $L_4(m) > L_4(j)$ for all $j\le m-1$.  Let $\eps_1 a_{i_1} + \ldots +  \eps_m a_{i_m} = 0$.
On the one hand, by Lemma~\ref{lem:fibo_sum_upper_bound}
\begin{equation}\label{eq:tech123_rep}
|\eps_1 a_{i_1} + \ldots + \eps_{j-1} a_{i_{j-1}}| \leq \eta^{i_{j-1} + L_{1}(j-1)} \leq  \eta^{i_{j-1} + L_{1}(m)}.
\end{equation}
On the other hand, by Lemma~\ref{lem:fibo_either_0_or_large_sum}, applied to the indices $i_{j}, \ldots, i_m$,
$$
\text{ either } \quad
\eps_j a_{i_j} + \ldots + \eps_{m} a_{i_{m}} = 0
\quad \text{ or }   \quad
|\eps_j a_{i_j} + \ldots + \eps_{m} a_{i_{m}}| \geq  \eta^{i_j - L_3(m)}.
$$
(Note that the smallest index satisfies $i_j > L_4(m) > K_3(m-j+1)$.)

\vspace*{2mm}
\noindent
\textit{Case 1a:} If $\eps_j a_{i_j} + \ldots + \eps_{m} a_{i_{m}} = 0$, then also $\eps_j a_{i_j} + \ldots +  \eps_{m} a_{i_{m}}$ and the proof is complete.

\vspace*{2mm}
\noindent
\textit{Case 1b:} If $|\eps_j a_{i_j} + \ldots + \eps_{m} a_{i_{m}}| \geq  \eta^{i_j - L_3(m)}$, then the triangle inequality together with~\eqref{eq:tech123_rep}  gives
\begin{align*}
|\eps_1 a_{i_1} + \ldots +  \eps_{m} a_{i_{m}}|
&\geq
\eta^{i_j - L_3(m)} - \eta^{i_{j-1} + L_{1}(m)}
\\
&=
\eta^{i_{j-1}+L_{1}(m)}( \eta^{i_j - i_{j-1}- L_3(m) - L_{1}(m)} - 1)
\\
&\geq
\eta^{i_1+L_{1}(m)}(\eta - 1) > 0,
\end{align*}
where we used $i_j - i_{j-1}> L_4(m) \geq  L_3(m) + L_{1}(m)+1$. This is a contradiction showing that Case 1b does not occur.
\end{proof}

\begin{proof}[Proof of Theorem~\ref{theo:fibonacci_general}]
Fix $m\in \mathbb N$. By Lemma~\ref{lem:cumulants_combinatorial_formula}, we have
\begin{equation}\label{eq:kappa_as_sum_multiplicities}
\kappa_m(S_n) \;=\; \frac{1}{2^{m}}\, \sum_{T\in\mathcal{T}_m(n)} {\rm{mult}}(T), \qquad n\in \N.
\end{equation}
For a tuple $T = (i_1,\dots,i_m; \varepsilon_1,\dots,\varepsilon_m)\in\mathcal{T}_m(n)$ we write
$$
{\mathrm {gap}} (T) := {\mathrm{gap}} (i_1,\ldots, i_m).
$$

\vspace*{2mm}
\noindent
\textit{Step 1:}
In this step we prove:  Tuples having a gap $> L_4(m)$, where $L_4(m)$ comes from Lemma~\ref{lem:fibo_gaps_both_zero},  have multiplicity $0$ and do not contribute to the sum in~\eqref{eq:kappa_as_sum_multiplicities}. So,
\begin{equation}\label{eq:kappa_as_sum_multiplicities_small_gaps}
\kappa_m(S_n) \;=\; \frac{1}{2^{m}}\, \sum_{T\in\mathcal{T}_m(n)} {\rm{mult}}(T) \bm{1}_{\mathrm{gap}(T) \leq L_4(m)}, \qquad n\in \N.
\end{equation}

 For the proof, we consider a tuple $T\in\mathcal{T}_m(n)$ with $\mathrm{gap}(T) > L_4(m)$. The latter condition means that there is a disjoint decomposition $[m] = J_1\cup J_2$ such that $i_{j_2} > i_{j_1} + L_4(m)$ for all $j_1\in J_1, j_2\in J_2$. We claim:
\begin{itemize}
\item[(a)] For every $T$-zero-sum subset $B\subseteq [m]$, the subsets $B\cap J_1$ and $B\cap J_2$ are also $T$-zero-sum subsets.
\item[(b)] ${\rm{mult}}(T) = 0$.
\end{itemize}

\begin{proof}[Proof of~(a).]
Recall that $B$ is a $T$-zero-sum subset if and only if $\sum_{j\in B} \eps_j a_{i_j} = 0$. If $B\cap J_1= \varnothing$ or $B\cap J_2 = \varnothing$, there is nothing to prove. Suppose that both sets, $B\cap J_1$ and $B\cap J_2$, are non-empty.  By Lemma~\ref{lem:fibo_gaps_both_zero}, applied to the indices $(i_j: j\in B)$, we have $\sum_{j\in B\cap J_1} \eps_j a_{i_j} = \sum_{j\in B\cap J_1} \eps_j a_{i_j} = 0$.
\end{proof}

\begin{proof}[Proof of~(b).]
We use Lemma~\ref{lem:multiplicity_combinatorial_formula}. Recall that $\mathcal U_T$ consists of $T$-zero-sum partitions. If $\pi \in \min (\mathcal U_T)$ is a \textit{minimal} $T$-zero-sum partition, then every block of $\pi$ is contained in $J_1$ or $J_2$. Indeed, if $B$ is some block of $\pi$ not contained in $J_1$ or $J_2$, then splitting $B$ into the blocks $B\cap J_1$ and $B\cap J_2$, which are also $T$-zero-sum sets by (a), yields a $T$-zero-sum partition which is finer than $\pi$ -- a contradiction.  So, every minimal partition in $\mathcal U_T$ is finer than the partition $\{J_1,J_2\}$. Consequently, the  join of minimal elements of $\min (\mathcal U_T)$ cannot be equal to $[m]$.  By Lemma~\ref{lem:multiplicity_combinatorial_formula}, we conclude that ${\rm{mult}}(T) = 0$.
\end{proof}

\vspace*{2mm}
\noindent
\textit{Step 2:} Consider now all tuples $T = (i_1,\dots,i_m; \varepsilon_1,\dots,\varepsilon_m)\in \mathcal{T}_m(n)$ in which all gaps are  $\leq L_4(m)$. Define $K(m) := K_2(m, m L_4(m))$.   The number of such tuples with the additional property that $\min \{i_1,\ldots, i_m\} \leq K(m)$ is finite and for every such tuple $\max\{i_1,\ldots, i_m\} \leq n_0(m)$ for some constant $n_0(m)$. It follows that the number
\begin{equation}\label{eq:tech2_proof_fibo}
B_{m}' := \sum_{T\in\mathcal{T}_m(n)} {\rm{mult}}(T) \bm{1}_{\mathrm{gap}(T) \leq L_4(m), \; \min \{i_1,\ldots, i_m\} \leq K(m)}
\end{equation}
does not depend on  $n \geq n_0(m)$.

\vspace*{2mm}
\noindent
\textit{Step 3:}
It remains to consider tuples $T = (i_1,\dots,i_m; \varepsilon_1,\dots,\varepsilon_m)\in \mathcal{T}_m(n)$ in which all gaps are $\leq L_4(m)$ and  $\min \{i_1,\ldots, i_m\} > K(m)$.  Recall that $i_{(1)} = \min \{i_1,\ldots, i_m\}$ and write $\Delta (i_1,\ldots, i_m) := (i_1 - i_{(1)}, i_2- i_{(1)},\ldots, i_m - i_{(1)}) \in \N_0^m$. Let $\mathcal D_m\subset \N_0^{m}$ be the set of values that $\Delta(i_1,\ldots, i_m)$ can attain for $(i_1,\ldots, i_m) \in \N^m$  with $\mathrm{gap}(i_1,\ldots, i_m) \leq L_4(m)$. Note that the set $\mathcal D_m$ is finite -- this is due to the bound on the size of the  gaps. Take some $\Delta \in \mathcal D_m$, some $\eps' = (\eps_1',\ldots, \eps_m')\in \{\pm1\}^m$ and consider tuples $T\in (i_1,\dots,i_m; \varepsilon_1,\dots,\varepsilon_m)\in \mathcal{T}_m(n)$ with
\begin{equation}\label{eq:fix_param_tuple_T}
\mathrm{gap}(T) \leq L_4(m), \; i_{(1)} > K(m), \; \Delta(i_1,\ldots, i_m)= \Delta, \;
(\eps_1,\ldots, \eps_m) = (\eps_1',\ldots, \eps_m').
\end{equation}

We now claim: The multiplicities of all such tuples are equal to each other. Indeed, by Lemma~\ref{lem:multiplicity_combinatorial_formula}, the multiplicity of $T$ is completely determined by the poset $\mathcal U_T$ of the $T$-zero-sum partitions of $[m]$. Now, for every $B\subseteq [m]$, the maximal gap of the tuple $(i_j: j\in B)$ is at most $m L_4(m)$. Note that $\min \{i_j: j\in B\} \geq i_{(1)}> K(m) = K_2 (m, m L_4(m))$. By Lemmas~\ref{lem:fibo_relation_a_i_implies_relation_lambda} and~\ref{lem:fibo_relation_lambda_implies_relation_a_i}, the set $B$ is a $T$-zero-sum set, i.e.\ $\sum_{j\in B} \eps_{j} a_{i_j} = 0$,  if and only if $\sum_{j\in B} \eps_{j} \eta^{i_j} = 0$.  The latter condition is equivalent to $\sum_{j\in B} \eps_{j} \eta^{i_j-i_{(1)}} = 0$, which depends only on $\Delta(i_1,\ldots, i_m)$ and $(\eps_1,\dots, \eps_m)$. It follows that either $B$ is a $T$-zero-sum set for all $T$ satisfying~\eqref{eq:fix_param_tuple_T}, or $B$ is not a $T$-zero-sum set for all $T$ satisfying~\eqref{eq:fix_param_tuple_T}. Hence, the poset $\mathcal U_T$ is the same for all $T$ satisfying~\eqref{eq:fix_param_tuple_T}.   By Lemma~\ref{lem:multiplicity_combinatorial_formula}, we conclude that all such $T$ have the same multiplicity, and the proof of the claim is complete.

Next we claim: For all sufficiently large $n$, the number of tuples $T\in \mathcal{T}_m(n)$ satisfying~\eqref{eq:fix_param_tuple_T} is given by $n -  c_m(\Delta, \eps')$, where $c_m(\Delta, \eps')$ is a constant depending only on $m, \Delta, \eps'$ (but not on $n$). Indeed, any such tuple $T$ is completely determined by $i_{(1)}$, which is arbitrary satisfying two conditions: (a)  $i_{(1)} > K_1(m)$ and (b)  all entries of $i_{(1)} + \Delta$ are not larger than $n$. This proves the claim.

We can now take the sum over all possible $\Delta \in \mathcal D_m$ and $\eps'\in \{\pm1\}^{m}$.  Recall that both sets are finite and do not depend on $n$. We conclude that, for all sufficiently large $n$, and for suitable integers $w_m$ and $B_m''$, we have
\begin{equation}\label{eq:tech3_proof_fibo}
\sum_{T\in\mathcal{T}_m(n)} {\rm{mult}}(T) \bm{1}_{\mathrm{gap}(T) \leq L_4(m), \; \min \{i_1,\ldots, i_m\} > K(m)} = w_m n + B_m''.
\end{equation}

\vspace*{2mm}
\noindent 
\textit{Step 4:}
Combining~\eqref{eq:kappa_as_sum_multiplicities_small_gaps}, \eqref{eq:tech2_proof_fibo}, \eqref{eq:tech3_proof_fibo}, we conclude that for all sufficiently large $n$, we have
$$
\kappa_m(S_n) = 2^{-m} (w_m n + B_m' + B_m'').
$$
This completes the proof of Theorem~\ref{theo:fibonacci_general}.
\end{proof}

\section*{Acknowledgments}
CA is supported by the Austrian Science Fund (FWF) through projects 10.55776/I4945, 10.55776/I5554, 10.55776/P34763 and 10.55776/P35322. ZK supported by the German Research Foundation (DFG) under Germany's Excellence Strategy EXC 2044/2 -- 390685587, Mathematics M\"unster: Dynamics - Geometry - Structure and by the DFG priority program SPP 2265 Random Geometric Systems. JP is supported by the DFG project 516672205. Part of this work was carried out while ZK and JP were visiting Graz University of Technology. We thank the department for their kind hospitality. We also gratefully acknowledge the support of ChatGPT5. 

\bibliography{Cumulants}
\bibliographystyle{abbrv}

\end{document}